\numberwithin{equation}{section}  
\newtheorem{Example}{Example}[section]
\DeclareMathOperator{\rank}{rank}
\DeclareMathOperator{\diag}{diag}
\DeclareMathOperator{\argmin}{argmin}
\DeclareMathOperator{\arctanh}{arctanh}
\DeclareMathOperator{\HH}{\sf H}
\DeclareMathOperator{\T}{\sf T}
\def\ee{\mathrm{e}}
\def\ii{\mathrm{i}}
\let\veqno\@@eqno
\def\sss{\scriptscriptstyle}
\def\hm{\hphantom{-}}
\title{Doubling Algorithm for The Discretized Bethe-Salpeter Eigenvalue Problem}
\author{Zhen-Chen Guo\thanks{Department of Applied Mathematics, National Chiao Tung University, Hsinchu 300, Taiwan; 
	\texttt{guozhch06@gmail.com}} \and
Eric King-wah Chu\thanks{School of Mathematics, Monash University, 9 Rainforest Walk, Victoria 3800, Australia; {\tt eric.chu@monash.edu}} \and
Wen-Wei Lin\thanks{Department of Applied Mathematics, National Chiao Tung University, Hsinchu 300, Taiwan; {\tt wwlin@math.nctu.edu.tw}}
}  
\begin{document}
\date{}

\maketitle

\begin{abstract}
The discretized Bethe-Salpeter eigenvalue problem arises in the Green's function evaluation in many body physics and quantum chemistry. Discretization leads to a matrix eigenvalue problem for $H \in \mathbb{C}^{2n \times 2n}$ with a Hamiltonian-like structure. After an appropriate transformation of $H$ to a standard symplectic form, the structure-preserving doubling algorithm, originally for algebraic Riccati equations, is extended for the discretized Bethe-Salpeter eigenvalue problem. Potential breakdowns of the algorithm, due to the ill condition or singularity of certain matrices, can be avoided with a double-Cayley transform or a three-recursion remedy. A detailed convergence analysis is conducted for the proposed algorithm, especially on the benign effects of the double-Cayley transform. Numerical results are presented to demonstrate the efficiency and structure-preserving nature of the algorithm. 
\end{abstract}

\begin{keywords}
Bethe-Salpeter  eigenvalue  problem, Cayley transform, doubling algorithm 
\end{keywords}

\begin{AMS} 
15A18, 65F15
\end{AMS}

\section{Introduction}

The  Bethe-Salpeter equation (BSE)~\cite{sb51} arises in the Green's function evaluation in many body physics, which
is the state-of-art model to  describe  electronic excitation and  molecule absorption~\cite{c95,kbdbab14,Leng:16,orr02,phssm13,prg13,rhl12,roro02,rts13a,rts13b,rl00,rg84,ssf98,w90}.
In the quantum chemistry and material science communities, the optical absorption spectrum of the BSE is an important and powerful tool for the characterization of different materials. In particular, the comparison of the computed 
and measured spectra helps to interpret experimental data and validate corresponding theories and models. 
It is generally known that good agreement between the theory and the experimental data can only be achieved 
by taking into account the interacting electron-hole pairs or {\em excitons}.
This is the case for the BSE which is derived from the coupling of the electrons and their corresponding holes.

After discretization, the BSE becomes the Bethe-Salpeter  eigenvalue  problem (BS-EVP):
\begin{equation}  \label{bsevp}
Hx\equiv
\begin{bmatrix}     
 \ \ A & \ \ B \\ \\
-\overline{B}  & -\overline{A}
\end{bmatrix}x	
=\lambda x,
\end{equation}
for $x \neq 0$, where $A, B \in\mathbb{C}^{n\times n}$ satisfy
$A^{\HH}=A,\ B^{\T}=B$.
Here $(\cdot)^{\HH}$ and $(\cdot)^{\T}$ denote the conjugate transpose and the transpose of matrices, respectively.
It can be shown~\cite{bfy15} that any eigenvalue $\lambda$ comes in quadruplets $\{\pm \lambda, \pm \overline{\lambda}\}$ (except for the degenerate cases when $\lambda$ is purely real or imaginary, or zero). Further details on the BS-EVP can be found in~\cite{peter:16,peter:15,yang:16} and the references therein.

In principle, all possible excitation energies and absorption spectra are sought although some excitations are more probable than others. The associated likelihood is measured by the spectral density or the density of states of $H$, defined as the number of eigenvalues per unit energy interval:
\[
\phi(\omega) = \frac{1}{2n} \sum_{j=1}^{2n} \delta(\omega - \lambda_j),
\]
where $\delta$ is the Dirac-delta function and $\lambda_j \in \lambda(H)$, the spectrum of $H$. Also of interest is the optical absorption spectrum:
\[
	\epsilon^+(\omega) = \sum_{j=1}^n \frac{(d_r^{\HH} x_j)(y_j^{\HH} d_l)}{y_j^{\HH} x_j} \delta (\omega - \lambda_j),
\]
where $x_j$ and $y_j$ are, respectively, the right- and left-eigenvectors corresponding to $\lambda_j>0$, and $d_r$ and $d_l$ are the dipole vectors. Evidently, to estimate these quantities, we require {\it all} the eigenvalues $\lambda_j$ and the associated eigenvectors $x_j$ and $y_j$. To complicate computations further, $A$ and $B$ are often high in dimensions (for systems with many occupied and unoccupied states) and generally dense.

In spite of the significance of the BS-EVP \eqref{bsevp}, only a few publications exist on its numerical solution, all under {\it additional} assumptions.
Some remarkable discoveries have been made in~\cite{peter:16,peter:15,yang:16}
under the condition that $\Gamma H$ is positive
definite with $\Gamma =\diag(I_{n},\,  -I_{n})$. Few general and efficient  methods have been
proposed to solve the BS-EVP \eqref{bsevp}.
All  methods proposed in~\cite{peter:16,peter:15,yang:16} are designed  
for the linear response eigenvalue problem,  under the extra assumptions that
$A, B\in\mathbb{R}^{n\times n}$ and $A \pm B$ are symmetric positive definite. 
Low-rank or tensor approximations~\cite{peter:16,peter:15} have been applied to handle the 
high computational demand but these techniques require additional structures on $H$.
Based on the equivalence of  the BS-EVP and a real Hamiltonian eigenvalue problem, 
Shao et al.~\cite{yang:16} put forward an efficient parallel approach
to compute the eigenpairs  corresponding to all the positive eigenvalues. 
Remarkable contributions have also been made for the numerical solution of the 
related linear response eigenvalue problem~\cite{bl12,bl13}.

\subsection*{Contributions} We solve the  {\it general} BS-EVP \eqref{bsevp}, 
without assuming  $\Gamma H$ being positive definite. We propose a doubling algorithm (DA) for the 
BS-EVP in two recursions. To deal with potential breakdowns, 
we design the double-Cayley transform (DCT) and a three-recursion remedy. The DCT 
reverses at worst two steps of the DA if there exist some complex eigenvalues and not at all if all eigenvalues are real.  
In the rare occasions that the DCT fails, the more expensive three-recursion remedy can be applied, without changing the convergence radius. 
Our DA preserves the special structure of the eigen-pairs.

\subsection*{Organization} 
Some preliminaries are presented in Section~2 and
our method is developed in Section~3.
We present some illustrative numerical results in Section~4
before the conclusions in Section~5. The Appendix contains two technical lemmas. 

\section{Preliminaries}

We denote the column space, the null space, the spectrum and the set of singular values by 
$\mathcal{R}(\cdot)$,  $\mathcal{N}(\cdot)$, $\lambda(\cdot)$ and $\sigma (\cdot)$ respectively. 
By $M\oplus N$ or $\diag(M,N)$, we denote $\begin{bmatrix}
M & \mathbf 0\\ \mathbf 0&N
\end{bmatrix}$. Similarly, we define $\bigoplus_{j} M_j$. 
The MATLAB expression $M(k:l, s:t)$ denotes the submatrix of $M$ containing elements in 
rows $k$ to $l$ and columns $s$ to $t$. Also, the $i$th column of the identity matrix $I$ is $e_i$ and
\[
J \equiv \begin{bmatrix}&I_n\\ \\-I_n&\end{bmatrix},
\qquad \Gamma \equiv \begin{bmatrix}I_n&\\ \\&-I_n\end{bmatrix},
\qquad \Pi \equiv \begin{bmatrix} &I_n\\ \\I_n& \end{bmatrix}.
\]

\begin{definition} 
The matrix pair  $(M, \, L)$ with $M, L\in\mathbb{C}^{2n\times 2n}$ is a symplectic pair if and only if $MJM^{\T}=LJL^{\T}$.
\end{definition}

\begin{definition}
The matrix pair $(M,\,  L)$ is in the first standard symplectic form (SSF-1) if and only if   
\[
M=\begin{bmatrix}E&\mathbf 0\\ \\ F&I_n\end{bmatrix}, \qquad
L=\begin{bmatrix}I_n&K\\ \\ \mathbf 0&E^{\T}\end{bmatrix},  \]
	with  $E, F\equiv F^{\T}, K\equiv K^{\T}\in\mathbb{C}^{n\times n}$. 
\end{definition}

\begin{definition}
Let $M, L\in\mathbb{C}^{2n\times 2n}$ and denote $\mathcal{N}(M, L)\equiv$
\[
\left\{[M_{*},\, L_{*}]: M_{*}, L_{*}\in\mathbb{C}^{2n\times 2n}, \
\rank([M_{*},\, L_{*}])=2n, \ [M_{*},\, L_{*}][L^{\T},\, -M^{\T}]^{\T}=0 \right\},
\]
which is nonempty. The action 
$(M, \, L) \longrightarrow  (\widetilde{M}, \, \widetilde{L}) = (M_{*}M, \, L_{*}L)$ 
is called a {\it doubling transformation} of $(M, L)$ for some  $[M_{*},\, L_{*}] \in\mathcal{N}(M, L)$.
\end{definition}

Next we consider the properties of the doubling transformation. 
\begin{lemma}(\cite[Theorem~2.1]{lx06})\label{lemme-doubling}
Let $(\widetilde{M},\,  \widetilde{L})$ be the result of a doubling transformation of $(M,\, L)$, where
$M, L$, $\widetilde{M}, \widetilde{L}\in\mathbb{C}^{2n\times 2n}$,
we have
\begin{enumerate}
  \item [{\em (1)}] $(\widetilde{M}, \, \widetilde{L})$ is a symplectic pair provided that  $(M,\, L)$ is one; and
  \item [{\em (2)}] if $MU=LUR$ and $MVS=LV$ for some $U, V\in\mathbb{C}^{2n\times  l }$ and $R, S\in\mathbb{C}^{ l \times  l }$,
                    then $\widetilde{M}U=\widetilde{L}UR^2$ and $\widetilde{M}VS^2=\widetilde{L}V$.
\end{enumerate}
\end{lemma}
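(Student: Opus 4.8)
The plan is to verify both claims directly from the defining relation of a doubling transformation, namely that $[M_*,\,L_*]$ has full row rank $2n$ and satisfies $[M_*,\,L_*][L^\T,\,-M^\T]^\T=0$, i.e.\ $M_*L^\T=L_*M^\T$, together with the bookkeeping identities $\widetilde M=M_*M$ and $\widetilde L=L_*L$.

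For part~(1): assuming $(M,L)$ is symplectic, i.e.\ $MJM^\T=LJL^\T$, I compute $\widetilde M J\widetilde M^\T = M_*MJM^\T M_*^\T = M_*LJL^\T M_*^\T$. The goal is to push this to $L_*LJL^\T L_*^\T = \widetilde L J\widetilde L^\T$, so what is needed is $M_*L\,(JL^\T M_*^\T) = L_*L\,(JL^\T L_*^\T)$; since $M_*L^\T=L_*M^\T$, the natural move is to insert this to swap $M_*$ for $L_*$. Concretely, write $JL^\T M_*^\T$ and use that $J$ is essentially a skew pairing, transposing the relation $M_*L^\T=L_*M^\T$ to get $LM_*^\T=ML_*^\T$, and then $M_*LJL^\T M_*^\T = M_*(LJ)(L^\T M_*^\T)=M_*(LJ)(M^\T L_*^\T)$; a parallel manipulation on the other side reduces both expressions to a common matrix. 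The only subtlety is keeping the $J$'s in the right place; because $J^\T=-J$ the signs cancel in pairs, so this is a short computation.

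For part~(2): suppose $MU=LUR$ and $MVS=LV$. Apply $M_*$ and $L_*$ respectively. From $MU=LUR$ I get $\widetilde M U = M_*MU = M_*LUR$; I then want to replace $M_*L$ acting on the relevant vectors by something expressible through $\widetilde L=L_*L$. The key observation (this is the crux) is that the null-space condition $M_*L^\T=L_*M^\T$, combined with $MU=LUR$, lets one show $M_*LU R = L_*L U R^2$ — one uses $M_*L$ on $LUR$-type quantities and feeds the relation $MU=LUR$ back in, so that one factor of $R$ is produced by converting $M_*(L\cdot)$ into $L_*(M\cdot)$ and another by reapplying $MU=LUR$. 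Symmetrically, from $MVS=LV$ one extracts $\widetilde M VS^2=\widetilde L V$. The structural reason it works is exactly that in Lemma~2.3 of~\cite{lx06}: $\mathcal N(M,L)$ annihilates the "companion" block $[L^\T,\,-M^\T]^\T$, which is what encodes the right spectral subspace of the pencil, so composing doubles the associated eigenvalue exponents.

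The main obstacle is purely organizational rather than deep: one must be careful that $U$ and $R$ (and $V$, $S$) are rectangular and need not commute with anything, so the argument must be phrased entirely in terms of the matrix identities $MU=LUR$, $M_*L^\T=L_*M^\T$ and their transposes, without ever "solving for" $R$ or inverting $M$, $L$. Once the substitution pattern $M_*L(\cdot)\rightsquigarrow L_*M(\cdot)$ is set up and combined with the hypotheses in the correct order, both parts follow in a few lines; since the statement is quoted verbatim from~\cite[Theorem~2.1]{lx06}, I would in practice cite that reference and include only this sketch.
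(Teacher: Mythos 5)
Your overall strategy---direct verification from the defining relation of $\mathcal{N}(M,L)$---is the standard one and is essentially what \cite[Theorem~2.1]{lx06} does (the paper itself only cites that reference and gives no proof). However, you have misread the key identity. Since $[L^{\T},\,-M^{\T}]^{\T}$ is the $4n\times 2n$ block column with blocks $L$ and $-M$, the condition $[M_*,\,L_*][L^{\T},\,-M^{\T}]^{\T}=0$ reads $M_*L=L_*M$, \emph{not} $M_*L^{\T}=L_*M^{\T}$ as you state; compare the paper's own verification inside the proof of Theorem~\ref{theorem-doubling-trans1}, which checks precisely $M_*L=L_*M$. Your manipulations are then inconsistent with your stated relation: in part~(1) you invoke $L^{\T}M_*^{\T}=M^{\T}L_*^{\T}$, which is the transpose of the \emph{correct} identity and does not follow from the one you wrote down, and the substitution pattern $M_*(L\,\cdot)\rightsquigarrow L_*(M\,\cdot)$ that you correctly single out as the crux of part~(2) is again the correct identity, not yours. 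Taken literally, your starting relation would make part~(2) fail.

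With $M_*L=L_*M$ in hand, both parts are one-line computations and no care about ``keeping the $J$'s in the right place'' is needed. For (2): $\widetilde{M}U=M_*MU=M_*LUR=L_*MUR=L_*LUR^2=\widetilde{L}UR^2$, and symmetrically $\widetilde{M}VS^2=M_*(MVS)S=M_*LVS=L_*MVS=L_*LV=\widetilde{L}V$. For (1): $\widetilde{M}J\widetilde{M}^{\T}=M_*(MJM^{\T})M_*^{\T}=M_*(LJL^{\T})M_*^{\T}=(M_*L)J(M_*L)^{\T}=(L_*M)J(L_*M)^{\T}=L_*(MJM^{\T})L_*^{\T}=L_*(LJL^{\T})L_*^{\T}=\widetilde{L}J\widetilde{L}^{\T}$. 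Note also that the full-rank requirement on $[M_*,\,L_*]$ plays no role in either computation; it is needed elsewhere (to guarantee the doubling step is nontrivial), not here.
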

In other words, doubling transformations preserve symplecticity and deflating subspaces as well as square eigenvalues of matrix pairs. 

\begin{lemma}\label{lemma-H-Hermitian}
It holds that  $H\Pi =-\Pi {\overline{H}}$ and $\Gamma H \Gamma=H^{\HH}$.
\end{lemma}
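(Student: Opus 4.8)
The claim is a pair of identities about the BS-EVP matrix $H$ in \eqref{bsevp}, so the natural approach is a direct block computation.

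\medskip

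\noindent\textbf{Proof plan.} The plan is to verify both identities by brute-force $2\times2$ block multiplication, exploiting the Hermitian/symmetric structure $A^{\HH}=A$ and $B^{\T}=B$. For the first identity, I would write
\[
H\Pi=\begin{bmatrix} A & B\\ -\overline B & -\overline A\end{bmatrix}\begin{bmatrix} & I_n\\ I_n & \end{bmatrix}=\begin{bmatrix} B & A\\ -\overline A & -\overline B\end{bmatrix},
\]
and separately
\[
-\Pi\overline H=-\begin{bmatrix} & I_n\\ I_n & \end{bmatrix}\begin{bmatrix} \overline A & \overline B\\ -B & -A\end{bmatrix}=-\begin{bmatrix} -B & -A\\ \overline A & \overline B\end{bmatrix}=\begin{bmatrix} B & A\\ -\overline A & -\overline B\end{bmatrix},
\]
so the two sides agree; note this step uses only the block pattern of $H$ and needs no symmetry hypotheses. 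For the second identity I would compute $\Gamma H\Gamma=\begin{bmatrix} I_n & \\ & -I_n\end{bmatrix}\begin{bmatrix} A & B\\ -\overline B & -\overline A\end{bmatrix}\begin{bmatrix} I_n & \\ & -I_n\end{bmatrix}=\begin{bmatrix} A & -B\\ \overline B & -\overline A\end{bmatrix}$, while $H^{\HH}=\begin{bmatrix} A^{\HH} & -\overline B^{\HH}\\ B^{\HH} & -\overline A^{\HH}\end{bmatrix}$; using $A^{\HH}=A$ we get the $(1,1)$ and $(2,2)$ blocks, and using $B^{\T}=B$ (hence $\overline B^{\HH}=\overline{B^{\T}}\,{}=B$ wait, $\overline B^{\HH}=(\overline B)^{\HH}=(B^{\HH})^{-}$... more carefully $\overline B^{\HH}=\overline{\,\overline B^{\T}\,}=B^{\T}=B$ and $B^{\HH}=\overline{B^{\T}}=\overline B$) the off-diagonal blocks match $\begin{bmatrix} \cdot & -B\\ \overline B & \cdot\end{bmatrix}$.

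\medskip

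\noindent The only place requiring minor care is bookkeeping of the conjugate-transpose acting blockwise, since $H^{\HH}$ transposes the off-diagonal blocks and conjugates each entry; I would state explicitly that $(-\overline B)^{\HH}=-B^{\T}=-B$ lands in the $(1,2)$ position and $B^{\HH}=\overline{B^{\T}}=\overline B$ lands in the $(2,1)$ position, which is exactly what $\Gamma H\Gamma$ produces. This is entirely routine and poses no genuine obstacle; the identities are essentially restatements of the defining symmetry/conjugation structure of $H$, and their purpose is bookkeeping for the symplectic transformation used later in Section~3.
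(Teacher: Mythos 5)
Your proposal is correct and is exactly the direct blockwise verification that the paper's proof ("It can be verified directly") leaves to the reader; the first identity indeed uses only the block pattern of $H$, and the second uses $A^{\HH}=A$ and $B^{\T}=B$ precisely where you invoke them. The only issue is presentational: the parenthetical working-out of $(\overline B)^{\HH}=B$ and $B^{\HH}=\overline B$ should be cleaned up into a single line before submission.
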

\begin{proof}
It can be verified directly.
\end{proof}

\begin{lemma}\label{lemma-eigenpairs}
Assume that $HZ=ZS$ with $Z\in\mathbb{C}^{2n\times  l }$ and  $S\in\mathbb{C}^{ l \times  l }$,
then we have
$H (\Pi\overline{Z})=(\Pi\overline{Z})(-\overline{S})$ and $(Z^{\HH}\Gamma) H=S^{\HH}(Z^{\HH}\Gamma)$.
\end{lemma}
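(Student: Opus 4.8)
The statement follows by combining the hypothesis $HZ=ZS$ with the two identities in Lemma~\ref{lemma-H-Hermitian}. For the first claim, I would start from $HZ=ZS$, apply $\Pi$ on the left after taking complex conjugates: conjugating gives $\overline{H}\,\overline{Z}=\overline{Z}\,\overline{S}$, and then multiplying on the left by $\Pi$ and using $\Pi\overline{H}=-H\Pi$ (the conjugate-transpose-free form of $H\Pi=-\Pi\overline{H}$ from Lemma~\ref{lemma-H-Hermitian}) yields $-H\Pi\overline{Z}=\Pi\overline{Z}\,\overline{S}$, i.e. $H(\Pi\overline{Z})=(\Pi\overline{Z})(-\overline{S})$, as desired.

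For the second claim, I would take the conjugate transpose of $HZ=ZS$ to get $Z^{\HH}H^{\HH}=S^{\HH}Z^{\HH}$, then insert the relation $H^{\HH}=\Gamma H\Gamma$ from Lemma~\ref{lemma-H-Hermitian}. This gives $Z^{\HH}\Gamma H\Gamma=S^{\HH}Z^{\HH}$; multiplying on the right by $\Gamma$ and using $\Gamma^2=I_n\oplus I_n=I_{2n}$ produces $(Z^{\HH}\Gamma)H=S^{\HH}(Z^{\HH}\Gamma)$, which is the claimed left-eigenvector relation.

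Both parts are short algebraic manipulations, so there is no serious obstacle; the only point requiring a little care is keeping track of whether one uses $H\Pi=-\Pi\overline{H}$ or its equivalent rearrangement $\Pi\overline{H}=-H\Pi$ (these are the same because $\Pi^2=I_{2n}$), and similarly making sure the conjugate transpose is applied consistently, since $S$ and $Z$ are complex. I would present the two displays explicitly and note that $\Pi^2=\Gamma^2=I_{2n}$ is what makes the cancellations work.
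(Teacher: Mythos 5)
Your proof is correct and follows exactly the route the paper intends: the paper's proof is just the one-line remark that the claims ``directly follow from Lemma~\ref{lemma-H-Hermitian},'' and your two computations (conjugating and left-multiplying by $\Pi$ using $H\Pi=-\Pi\overline{H}$; taking conjugate transposes and inserting $H^{\HH}=\Gamma H\Gamma$ with $\Gamma^2=I_{2n}$) are precisely the details being elided. The only cosmetic quibble is that passing from $H\Pi=-\Pi\overline{H}$ to $\Pi\overline{H}=-H\Pi$ is a trivial rearrangement of an equality and does not actually require $\Pi^2=I_{2n}$.
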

\begin{proof}
The results directly  follow from  Lemma~\ref{lemma-H-Hermitian}.
\end{proof}

If $S$ in Lemma~\ref{lemma-eigenpairs} possesses the spectrum 
$\lambda(S)=\{\lambda, \ldots, \lambda\}$ (repeated $l$ times), Lemmas~\ref{lemma-H-Hermitian} and~\ref{lemma-eigenpairs}
imply that $-\lambda$, $\overline{\lambda}$ and $-\overline{\lambda}$  are also the
eigenvalues of $H$ with the same algebraic and geometric multiplicities.
Provided that $HX_j=X_jS_j$ with $X_j\in\mathbb{C}^{2n\times l_{j}}$ and  $S_j\in\mathbb{C}^{l_j\times l_j}$
for $j=1, 2$, Lemma~\ref{lemma-eigenpairs} further implies that
$(X_2^{\HH}\Gamma X_1) S_1 = X_2^{\HH}\Gamma H X_1=S_2^{\HH}(X_2^{\HH}\Gamma X_1)$ and
$(X_2^{\T}\Pi \Gamma) X_1S_1 = (X_2^{\T}\Pi \Gamma)HX_1=(-S_2^{\T})(X_2^{\T}\Pi \Gamma X_1)$,
or equivalently 
\[
(X_2^{\HH}\Gamma X_1) S_1-S_2^{\HH}(X_2^{\HH}\Gamma X_1)=0= 
(X_2^{\T}\Pi \Gamma X_1) S_1 + S_2^{\T}(X_2^{\T}\Pi \Gamma X_1). 
\]
Apparently, when $\lambda(S_1)\cap\lambda(\overline S_2)=\emptyset$, we have  $X_2^{\HH}\Gamma X_1=0$;
when $\lambda(S_1)\cap\lambda(-S_2)=\emptyset$, we have $X_2^{\T}\Pi\ \Gamma X_1=0$. 
By Lemmas~\ref{lemma-H-Hermitian} and~\ref{lemma-eigenpairs}, we can then deduce the 
eigen-decomposition result of $H$ for the convergence proof.

Temporarily assume that there is no purely imaginary  nor zero eigenvalues  for $H$,  
$\lambda_j\neq\lambda_k$ for $j\neq k$ and 
\begin{align*}
\lambda(H)=&\{\underbrace{\lambda_1, \ldots, \lambda_1}_{l_1}, \underbrace{\overline{\lambda}_1, \ldots, \overline{\lambda}_1}_{l_1},
\underbrace{-\overline{\lambda}_1, \ldots, -\overline{\lambda}_1}_{l_1}, \underbrace{-\lambda_1, \ldots, -\lambda_1}_{l_1}, \ldots, \\
&\underbrace{\lambda_s, \ldots, \lambda_s}_{l_s}, \underbrace{\overline{\lambda}_s, \ldots, \overline{\lambda}_s}_{l_s},
\underbrace{-\overline{\lambda}_s, \ldots, -\overline{\lambda}_s}_{l_s}, \underbrace{-\lambda_s, \ldots, -\lambda_s}_{l_s}, \\
&\underbrace{\lambda_{s+1}, \ldots, \lambda_{s+1}}_{l_{s+1}}, \underbrace{-\lambda_{s+1}, \ldots, -\lambda_{s+1}}_{l_{s+1}}, \ldots,
\underbrace{\lambda_{t}, \ldots, \lambda_{t}}_{l_{t}}, \underbrace{-\lambda_{t}, \ldots, -\lambda_{t}}_{l_{t}}\}, 
\end{align*}
where  $\lambda_j\in\mathbb{C}$  with (i)  $\Re(\lambda_{j})\Im(\lambda_j)\neq0$ 
and $\Re(\lambda_j)<0$ for $j=1, \ldots, s$,
and (ii) $\Im(\lambda_j)=0$ and $\lambda_{j}<0$ for  $j=s+1, \ldots, t$. Subsequently, we have the following result.

\begin{lemma}\label{lemma-decomp}
Suppose  that  no purely imaginary  nor zero eigenvalues exist for $H$.   
Then there exist 
\begin{align*}
	X&= [X_1, Y_1, \cdots, X_s, Y_s;\, X_{s+1}, \cdots, X_{t}] \in\mathbb{C}^{2n\times n},
	\\
	S&=\diag(S_1, R_1, \ldots, S_s, R_s;\, S_{s+1}, \ldots, S_{t})\in\mathbb{C}^{n\times n}
\end{align*} 
with $X_j\in\mathbb{C}^{2n\times l_j}$, $S_{j}\in\mathbb{C}^{l_j\times l_j}$,
$\lambda(S_{j})=\{\lambda_j, \ldots, \lambda_j\}$ ($j=1, \ldots, t$),
$Y_{j}\in\mathbb{C}^{2n\times l_j}$, $R_{j}\in\mathbb{C}^{l_j\times l_j}$ and 
$\lambda(R_{j})=\{\overline{\lambda}_j, \ldots, \overline{\lambda}_j\}$ ($j=1, \ldots, s$), such that
\[
H [X,\, \Pi\overline{X} ] = [X,\, \Pi\overline{X}] \diag(S,\, -\overline{S}), \ \ \ 
[X,\, \Pi\overline{X}]^{\HH}\Gamma [X,\, \Pi\overline{X}]=\diag(D,\, -\overline{D}), \\ 
\]
where $D=\diag(D_1, \ldots, D_{s};\, D_{s+1}, \ldots, D_{t})$,   
\begin{align*}
& D_j=\begin{bmatrix}\mathbf 0&X_j^{\HH}\Gamma Y_j\\ Y_j^{\HH}\Gamma X_j&\mathbf 0 \end{bmatrix}\in\mathbb{C}^{2l_j\times 2l_j} \ \ \ (j=1, \ldots, s), \\
& D_j=X_j^{\HH}\Gamma X_j\in\mathbb{C}^{l_j\times l_j} \ \ \ (j=s+1, \ldots, t).
\end{align*}
\end{lemma}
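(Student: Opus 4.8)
The plan is to build $X$ directly from the generalized eigenspaces of $H$ and then verify the two displayed identities by combining Lemma~\ref{lemma-eigenpairs} with the Sylvester-type orthogonality relations recorded just before the statement. \emph{Construction.} For $j=1,\ldots,t$ let the columns of $X_j\in\mathbb{C}^{2n\times l_j}$ be a basis of the generalized eigenspace $\mathcal{N}\big((H-\lambda_j I)^{n}\big)$ of $H$ belonging to $\lambda_j$ and let $S_j$ represent the restriction of $H$ to that subspace, so $HX_j=X_jS_j$ with $\lambda(S_j)=\{\lambda_j,\ldots,\lambda_j\}$; for $j=1,\ldots,s$ let similarly the columns of $Y_j\in\mathbb{C}^{2n\times l_j}$ be a basis of the generalized eigenspace belonging to $\overline{\lambda}_j$, with $HY_j=Y_jR_j$ and $\lambda(R_j)=\{\overline{\lambda}_j,\ldots,\overline{\lambda}_j\}$. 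Set $X=[X_1,Y_1,\ldots,X_s,Y_s;\,X_{s+1},\ldots,X_t]$ and $S=\diag(S_1,R_1,\ldots,S_s,R_s;\,S_{s+1},\ldots,S_t)$, so that $HX=XS$ and, by Lemma~\ref{lemma-eigenpairs}, $H(\Pi\overline{X})=(\Pi\overline{X})(-\overline{S})$. Every eigenvalue in $\lambda(S)$ has negative real part and every eigenvalue in $\lambda(-\overline{S})$ has positive real part, so $\lambda(S)\cap\lambda(-\overline{S})=\emptyset$; since $H$ has no purely imaginary or zero eigenvalue, the displayed quadruplets and pairs exhaust $\lambda(H)$ with total multiplicity $2n$, hence $X$ has $n$ columns, $[X,\Pi\overline{X}]$ is $2n\times 2n$, and being a basis of complementary generalized eigenspaces it is nonsingular. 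This already gives the first identity $H[X,\Pi\overline{X}]=[X,\Pi\overline{X}]\diag(S,-\overline{S})$.

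\emph{Block structure of the congruence.} A direct computation gives $\Pi^{\HH}\Gamma\Pi=-\Gamma$, whence the $(2,2)$ block of $[X,\Pi\overline{X}]^{\HH}\Gamma[X,\Pi\overline{X}]$ equals $(\Pi\overline{X})^{\HH}\Gamma(\Pi\overline{X})=-X^{\T}\Gamma\overline{X}=-\overline{X^{\HH}\Gamma X}$. For the $(1,2)$ block $X^{\HH}\Gamma(\Pi\overline{X})$ I would invoke the orthogonality relation from the paragraph preceding the lemma: since $HX=XS$, $H(\Pi\overline{X})=(\Pi\overline{X})(-\overline{S})$ and $\lambda(-\overline{S})\cap\overline{\lambda(S)}=\emptyset$ (again a real-part argument), we get $X^{\HH}\Gamma(\Pi\overline{X})=0$, and the $(2,1)$ block vanishes by conjugate transposition since $\Gamma^{\HH}=\Gamma$. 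Hence $[X,\Pi\overline{X}]^{\HH}\Gamma[X,\Pi\overline{X}]=\diag\big(X^{\HH}\Gamma X,\,-\overline{X^{\HH}\Gamma X}\big)$, and it remains only to show $D:=X^{\HH}\Gamma X$ has the claimed block-diagonal form.

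\emph{Identifying $D$.} Apply the same orthogonality relation to each ordered pair of column blocks of $X$: the sub-block of $X^{\HH}\Gamma X$ coupling a block with eigenvalue $\mu$ to a block with eigenvalue $\nu$ vanishes whenever $\mu\neq\overline{\nu}$. Using that the displayed enumeration is genuine — the quadruplets $\{\pm\lambda_j,\pm\overline{\lambda}_j\}$ ($j\le s$) and pairs $\{\pm\lambda_j\}$ ($j>s$) being pairwise disjoint — this annihilates every coupling between \emph{distinct} groups. Within the $j$-th group with $j\le s$ one has $X_j^{\HH}\Gamma X_j=0$ and $Y_j^{\HH}\Gamma Y_j=0$ because $\lambda_j\neq\overline{\lambda}_j$ there, leaving only the couplings $X_j^{\HH}\Gamma Y_j$ and $Y_j^{\HH}\Gamma X_j=(X_j^{\HH}\Gamma Y_j)^{\HH}$, i.e.\ exactly the stated $D_j$; for $j>s$ the eigenvalue $\lambda_j$ is real, so the only surviving block is $X_j^{\HH}\Gamma X_j=:D_j$. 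Thus $D=\diag(D_1,\ldots,D_s;\,D_{s+1},\ldots,D_t)$ in the asserted form, which together with the previous step completes the proof.

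I expect the main work to be the systematic case analysis in the last step: one must track, for each ordered pair of column blocks, precisely which instance of the orthogonality relation applies, and confirm that it is the non-degeneracy hypothesis (and not something else) that rules out accidental coincidences such as $\lambda_i=\overline{\lambda}_j$ for $i\ne j$. A secondary, easy-to-overlook point is the bookkeeping that the column counts of $X$ and of $[X,\Pi\overline{X}]$ are indeed $n$ and $2n$, and that Lemma~\ref{lemma-eigenpairs} is applied to the correct pairs; both are routine once the generalized-eigenspace construction is set up carefully.
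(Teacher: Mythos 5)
Your proof is correct and follows exactly the route the paper intends: the paper omits a formal proof of this lemma, relying on the Sylvester-type orthogonality relations derived in the preceding paragraph together with Lemmas~\ref{lemma-H-Hermitian} and~\ref{lemma-eigenpairs}, which is precisely the machinery you assemble (generalized-eigenspace construction, the identity $\Pi^{\HH}\Gamma\Pi=-\Gamma$ for the $(2,2)$ block, and the case analysis $\mu\neq\overline{\nu}$ for the vanishing couplings). No gaps.
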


Obviously, $D$ (in Lemma~\ref{lemma-decomp}) is a nonsingular Hermitian matrix.  
Consequently, we can choose $X$ which satisfies  
$[X,\, \Pi\overline{X}]^{\HH}\Gamma [X,\, \Pi\overline{X}] =\Gamma$. This leads to  $X^{\HH} \Gamma X = I_n$ and 
$X^{\T}\Gamma \Pi X=0$, implying that $X(1:n,1:n)\in\mathbb{C}^{n\times n}$ 
is nonsingular with singular 
values no less than unity and $X(1:n,1:n)^{\T} X(n+1:2n,1:n)$ is complex  symmetric. 
 
Next consider the case when there exist some purely imaginary eigenvalues for $H$. 
We further assume that the partial multiplicities (the sizes of the Jordan blocks) of $H$ associated 
with the purely imaginary eigenvalues are all even. Let $\ii\omega_1, \cdots, \ii\omega_q$ be the different 
purely imaginary eigenvalues with Jordan blocks   
$J_{2p_{r,j}}(\ii\omega_j)\in \mathbb{C}^{2p_{r,j} \times 2p_{r,j}}$ 
for $r=1, \cdots, l_j$ and $j=1, \cdots, q$.  Then  there exist $W_{r,j}, Z_{r,j}\in 
\mathbb{C}^{2n\times p_{r,j}}$   
such that  
\begin{align*}
	&	
	H \left[ W_{1,1}, Z_{1,1}; \cdots; W_{l_1,1}, Z_{l_1,1} \,\vrule \,  
	\cdots \, \vrule \, W_{1,q}, Z_{1,q}; \cdots; W_{l_q,q}, Z_{l_q,q} \right]
	\\
	=&
	\begin{multlined}[t]
		\left[ W_{1,1}, Z_{1,1}; \cdots; W_{l_1,1}, Z_{l_1,1} \,\vrule \,  
		\cdots \, \vrule \, W_{1,q}, Z_{1,q}; \cdots; W_{l_q,q}, Z_{l_q,q} \right]
		\cdot\left[ \bigoplus_{j=1}^q\bigoplus_{r=1}^{l_j}  J_{2p_{r,j}}(\ii\omega_j)\right].
	\end{multlined}
\end{align*}
With $X\in \mathbb{C}^{2n\times n_1}$ and $S\in \mathbb{C}^{n_1\times n_1}$ and by Lemma~\ref{lemma-decomp}, 
we obtain 
\begin{align}\label{eq:eigen_decomp}
	H\left[ X, W_{\omega},  \Pi \overline{X}, Z_{\omega} \right]
	=
	\left[ X, W_{\omega},  \Pi \overline{X}, Z_{\omega} \right]
	\widetilde{S},
\end{align}
where $n_1+\sum_{j=1}^{q}\sum_{r=1}^{l_j}p_{r,j}=n$,  and
\begin{align*}
	W_{\omega}
	&=
	\left[ 
		W_{1,1}, \cdots, W_{l_1,1};  \cdots; W_{1,q}, \cdots, W_{l_q,q}
	\right],
	\\
	Z_{\omega}
	&=
	\left[
		Z_{1,1}, \cdots, Z_{l_1,1}; \cdots; Z_{1,q}, \cdots, Z_{l_q,q}
	\right],
	\\
	J_{\omega}&= \bigoplus_{j=1}^q\bigoplus_{r=1}^{l_j}   J_{p_{r,j}}(\ii\omega_j), 
	\qquad \qquad \Omega_{\omega}=  \bigoplus_{j=1}^q\bigoplus_{r=1}^{l_j}e_{p_{r,j}}e_1^{\T}, 
	\\
	J_{2p_{r,j}}(\ii\omega_j) & \equiv \begin{bmatrix}
		J_{p_{r,j}}(\ii\omega_j) & e_{p_{r,j}}e_1^{\T}
		\\
		0& J_{p_{r,j}}(\ii\omega_j)
	\end{bmatrix}, \ \ \ \widetilde{S} \equiv \begin{bmatrix}
		S & & &\\
		& J_{\omega} && \Omega_{\omega}\\
		\\
		& & -\overline{S}&\\
		& & & J_{\omega}
	\end{bmatrix}.  
\end{align*}

\section{Doubling Algorithm}\label{doublesec}

We now generalize the structure-preserving doubling algorithm (SDA) in~\cite{cfl,cflw,lcjl1,lcjl2} 
to  the DA for the BS-EVP. 

\subsection{Initial Symplectic Pencil} We transform $H$ to a symplectic pair  $(M,\, L)$ in the SSF-1 {\it \`a la} Cayley.

\begin{lemma}
For $\alpha\in\mathbb{R}$, the matrix pair
$(H+\alpha I_{2n}, \, H-\alpha I_{2n})$ is  symplectic.
\end{lemma}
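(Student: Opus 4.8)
The plan is to verify the defining identity $MJM^{\T}=LJL^{\T}$ of a symplectic pair directly, with $M=H+\alpha I_{2n}$ and $L=H-\alpha I_{2n}$. Since $\alpha\in\mathbb{R}$ it behaves as a scalar under transposition, so expanding both sides gives
\[
(H\pm\alpha I_{2n})\,J\,(H\pm\alpha I_{2n})^{\T}
= HJH^{\T}\pm\alpha\bigl(HJ+JH^{\T}\bigr)+\alpha^{2}J .
\]
The zeroth-order term $HJH^{\T}$ and the quadratic term $\alpha^{2}J$ are the same for both sign choices, so the pair $(H+\alpha I_{2n},\,H-\alpha I_{2n})$ is symplectic if and only if the cross term vanishes, i.e. $HJ+JH^{\T}=0$. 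Because $J^{\T}=-J$, this cross term equals $HJ-(HJ)^{\T}$, so everything reduces to showing that $HJ$ is a (complex) symmetric matrix, $(HJ)^{\T}=HJ$.

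The quickest way to finish is a block computation: from $H=\begin{bmatrix}A&B\\-\overline{B}&-\overline{A}\end{bmatrix}$ one obtains $HJ=\begin{bmatrix}-B&A\\ \overline{A}&-\overline{B}\end{bmatrix}$, whose transpose is $\begin{bmatrix}-B^{\T}&\overline{A}^{\T}\\ A^{\T}&-\overline{B}^{\T}\end{bmatrix}$. The standing hypotheses $B^{\T}=B$ and $A^{\HH}=A$ — the latter yielding both $\overline{A}^{\T}=A$ and, upon conjugation, $A^{\T}=\overline{A}$ (so also $\overline{B}^{\T}=\overline{B}$) — make this transpose equal to $HJ$, which is exactly what is needed.

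Alternatively, and more in keeping with the preliminaries, one can argue abstractly from Lemma~\ref{lemma-H-Hermitian}. Write $J=\Gamma\Pi$ and note $\Pi\Gamma=-J$. Transposing $\Gamma H\Gamma=H^{\HH}$ and using $\Gamma^{\T}=\Gamma$ gives $H^{\T}=\Gamma\overline{H}\Gamma$, while $H\Pi=-\Pi\overline{H}$ rearranges at once to $\Pi\overline{H}=-H\Pi$. Hence
\[
(HJ)^{\T}=(H\Gamma\Pi)^{\T}=\Pi\Gamma H^{\T}=\Pi\Gamma\cdot\Gamma\overline{H}\Gamma=\Pi\overline{H}\Gamma=-H\Pi\Gamma=HJ .
\]
There is no genuine obstacle in this argument; the only point requiring care is not to confuse transposes with conjugate transposes — in particular, $A^{\HH}=A$ forces both $A^{\T}=\overline{A}$ and $\overline{A}^{\T}=A$ — together with the observation that $\alpha$ being real is precisely what lets the first-order terms be isolated and cancelled. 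Once $HJ+JH^{\T}=0$ is established, the symplecticity of $(H+\alpha I_{2n},\,H-\alpha I_{2n})$ follows immediately from the displayed expansion.
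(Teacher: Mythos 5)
Your proposal is correct and follows exactly the paper's route: the paper's one-line proof says the result "can be deduced from $(HJ)^{\T}=HJ$", and you supply precisely that deduction (the expansion reducing symplecticity to $HJ+JH^{\T}=0$, i.e.\ to the symmetry of $HJ$) together with a valid block verification of $(HJ)^{\T}=HJ$ from $A^{\HH}=A$ and $B^{\T}=B$. Both your direct computation and the alternative argument via Lemma~\ref{lemma-H-Hermitian} check out.
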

\begin{proof}
The result can be deduced from $(HJ)^{\T}=HJ$.
\end{proof}

\begin{theorem} \label{theorem-SSF-1}
Select  $\alpha\in\mathbb{R}$ such that both 
$\alpha I_n-A$ and 
$R \equiv I_n-(\alpha I_n-\overline{A})^{-1}\overline{B}(\alpha I_n-A)^{-1}B$ 
are nonsingular.
There exists a nonsingular matrix $G\in\mathbb{C}^{2n\times 2n}$
such that $[G(H+\alpha I_n),\,  G(H-\alpha I_n)]$
is  a symplectic pair in SSF-1, with 
\begin{equation} \label{MLalpha}
M_{\alpha}\triangleq G(H+\alpha I_n)=\begin{bmatrix}E_{\alpha}&\mathbf 0\\ \\F_{\alpha}&I_n\end{bmatrix}, \, \, \,  
L_{\alpha}\triangleq G(H-\alpha I_n)=\begin{bmatrix}I_n&\overline{F}_{\alpha}\\ \\ \mathbf 0&\overline{E}_{\alpha}\end{bmatrix},
\end{equation}
where $E_{\alpha}, F_{\alpha}\in\mathbb{C}^{n \times n}$ satisfy $E_{\alpha}^{\HH}=E_{\alpha}$ and $F_{\alpha}^{\T}=F_{\alpha}$.
\end{theorem}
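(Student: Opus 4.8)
The plan is to construct $G$ explicitly by block elimination. Write $H+\alpha I_{2n}$ and $H-\alpha I_{2n}$ in $n\times n$ blocks:
\[
H+\alpha I_{2n}=\begin{bmatrix} A+\alpha I_n & B\\ -\overline B & -\overline A+\alpha I_n\end{bmatrix},\qquad
H-\alpha I_{2n}=\begin{bmatrix} A-\alpha I_n & B\\ -\overline B & -\overline A-\alpha I_n\end{bmatrix}.
\]
Observe that the $(2,2)$ block of $H+\alpha I_{2n}$ is $\alpha I_n-\overline A=\overline{\alpha I_n-A}$, which is nonsingular by hypothesis, and the $(1,1)$ block of $H-\alpha I_{2n}$ is $A-\alpha I_n=-(\alpha I_n-A)$, also nonsingular. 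The idea is to choose $G$ as a product of a block row-permutation with $\Pi$ and two block Gaussian eliminations, one killing the $(2,1)$ block of $\Pi(H+\alpha I_{2n})$ using its $(1,1)$ block $-\overline B$ — no, more cleanly: first apply a block-lower-triangular factor to make the first block column of $M_\alpha$ have zero bottom block, i.e. eliminate downward in $H+\alpha I_{2n}$, and simultaneously check what this does to $H-\alpha I_{2n}$.

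Concretely, I would take $G = \begin{bmatrix} I_n & 0\\ \ast & I_n\end{bmatrix}\begin{bmatrix} \ast & 0\\ 0 & I_n\end{bmatrix}$-type factors. First, left-multiply the pair by $\begin{bmatrix} I_n & -B(\alpha I_n-\overline A)^{-1}\\ 0 & I_n\end{bmatrix}$: applied to $H+\alpha I_{2n}$ this clears its $(1,2)$ block, leaving a block-lower-triangular matrix whose Schur complement in the $(1,1)$ position is $(A+\alpha I_n)+B(\alpha I_n-\overline A)^{-1}\overline B$; one checks this equals $(\alpha I_n-A)\big(-R^{\HH}\big)$ or a similar nonsingular expression using the definition of $R$ (after taking conjugates, using $A^{\HH}=A$, $B^{\T}=B$). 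Then a second left factor, block-upper-triangular, can be used to normalize. The payoff structure to aim for: after the first elimination $M_\alpha$ should be block lower triangular and $L_\alpha$ block upper triangular, and then diagonal scalings by the (nonsingular) corner blocks and their inverses bring the diagonal blocks of $M_\alpha$ and $L_\alpha$ to $E_\alpha, I_n$ and $I_n, \overline E_\alpha$ respectively. The symmetry $E_\alpha^{\HH}=E_\alpha$, $F_\alpha^{\T}=F_\alpha$ and the specific conjugate-paired form of $L_\alpha$ in \eqref{MLalpha} then follow from the symplecticity of the pair (already proved in the preceding lemma): a symplectic pair $(M,L)$ with $M$ block lower triangular and $L$ block upper triangular, of the displayed shapes, forces exactly these symmetry constraints via $MJM^{\T}=LJL^{\T}$, so no separate computation is needed once the block-triangular shape is in hand.

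The main obstacle is bookkeeping: correctly identifying which Schur complement equals which expression so that the matrix $R$ (and not some unmotivated matrix) appears as the nonsingularity condition, and then verifying that the entries that must match under conjugation actually do. I expect the cleanest route is to not track $G$ through every step but rather to (i) exhibit that a nonsingular $G$ exists making $M_\alpha$ block lower triangular and $L_\alpha$ block upper triangular with invertible diagonal blocks — this uses only the two stated nonsingularity assumptions and the fact that $\alpha I_n - \overline A$, $A-\alpha I_n$ are invertible — and (ii) invoke symplecticity of $(H+\alpha I_{2n}, H-\alpha I_{2n})$ to upgrade "block triangular with invertible diagonal" to the exact SSF-1 form with the Hermitian/symmetric and conjugate-pairing conditions. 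Step (ii) is where Lemma on $H\Pi=-\Pi\overline H$ (Lemma~\ref{lemma-H-Hermitian}) plausibly also enters to pin down the conjugate relationship between the $(1,1)$ and $(2,2)$ blocks of $L_\alpha$.
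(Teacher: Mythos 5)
Your step (i) --- producing a nonsingular $G$ by block elimination so that $M_\alpha=G(H+\alpha I_{2n})$ is block lower triangular with $(2,2)$ block $I_n$ and $L_\alpha=G(H-\alpha I_{2n})$ is block upper triangular with $(1,1)$ block $I_n$ --- is sound and is essentially the paper's construction: the paper takes $G=G_2G_1$ with a block \emph{lower} triangular $G_1$ (which simultaneously normalizes the $(1,1)$ block of $H-\alpha I_{2n}$ and clears its $(2,1)$ block) followed by a block upper triangular $G_2$ whose $(2,2)$ entry involves $R^{-1}$. Your elimination order is reversed but the hypotheses enter the same way, through the Schur complement $(A-\alpha I_n)\overline{R}$. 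One bookkeeping slip: after your first (block upper triangular) factor, the $(2,1)$ block of $H-\alpha I_{2n}$ is still $-\overline{B}$, so the second factor needed is block \emph{lower} triangular, not upper triangular as you state.

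The genuine gap is in step (ii). For the normalized pair the symplecticity identity gives
\[
\begin{bmatrix}E&\mathbf 0\\ F&I_n\end{bmatrix}J\begin{bmatrix}E&\mathbf 0\\ F&I_n\end{bmatrix}^{\T}
=\begin{bmatrix}\mathbf 0&E\\ -E^{\T}&F-F^{\T}\end{bmatrix},
\qquad
\begin{bmatrix}I_n&K\\ \mathbf 0&N\end{bmatrix}J\begin{bmatrix}I_n&K\\ \mathbf 0&N\end{bmatrix}^{\T}
=\begin{bmatrix}K^{\T}-K&N^{\T}\\ -N&\mathbf 0\end{bmatrix},
\]
so equating them yields only $F=F^{\T}$, $K=K^{\T}$ and $N=E^{\T}$, i.e.\ the SSF-1 of Definition~2.2. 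It does \emph{not} yield the conjugation relations $K=\overline{F}$ and $N=\overline{E}$ (equivalently $E^{\HH}=E$) asserted in \eqref{MLalpha}, so the claim that ``no separate computation is needed'' is an over-claim. The paper closes this by computing $E_\alpha=I_n+2\alpha\overline{R}^{-1}(A-\alpha I_n)^{-1}$ and $F_\alpha=-2\alpha(\overline{A}-\alpha I_n)^{-1}\overline{B}\,\overline{R}^{-1}(A-\alpha I_n)^{-1}$ explicitly and verifying $E_\alpha^{\HH}=E_\alpha$, $F_\alpha^{\T}=F_\alpha$ directly. Your appeal to Lemma~\ref{lemma-H-Hermitian} can be made rigorous, but it needs an argument you do not supply: from $H\Pi=-\Pi\overline{H}$ one gets $(H+\alpha I_{2n})=-\Pi\,\overline{H-\alpha I_{2n}}\,\Pi$, so $G''=-\Pi\overline{G}\Pi$ sends $(H+\alpha I_{2n},H-\alpha I_{2n})$ to $(\Pi\overline{L_\alpha}\Pi,\ \Pi\overline{M_\alpha}\Pi)$, which is again a pair in the normalized block-triangular form; since the normalizing $G$ is unique (any two differ by a matrix fixing both normalized shapes, hence by $I_{2n}$), $G''=G$ and therefore $\overline{K}=F$ and $\overline{N}=E$, which together with $N=E^{\T}$ gives $E^{\HH}=E$. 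Without either this uniqueness argument or the paper's explicit computation, the conjugate pairing in \eqref{MLalpha} remains unproved.
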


\begin{proof}
Let $H_{\sss\pm} \equiv H \pm \alpha I_{2n}$, $A_{\sss\pm} \equiv A \pm \alpha I_n$, 
\begin{align*}
	G_1=\begin{bmatrix} A_{\sss -}^{-1} & \mathbf 0\\
&\\
\overline{B} A_{\sss -}^{-1} &  I_n\end{bmatrix},
& \ \ \ 
G_2=\begin{bmatrix}I_n& A_{\sss -}^{-1}BR^{-1} \overline{A}_{\sss -}^{-1}\\
&\\
 \mathbf 0 & -R^{-1} \overline{A}_{\sss -}^{-1}\end{bmatrix},
\end{align*}
and $G=G_2G_1$. We obtain  
\begin{align*}
& G_1H_{\sss +} =\begin{bmatrix} A_{\sss -}^{-1}A_{\sss +} & A_{\sss -}^{-1}B\\ \\
2\alpha\overline{B} A_{\sss -}^{-1} & -\overline{A}_{\sss -} R \end{bmatrix}, \ \ \ 
G_2G_1H_{\sss +} =\begin{bmatrix}E_{\alpha} & \mathbf 0\\ \\ F_{\alpha}& I_n\end{bmatrix}, \\ 
& G_1H_{\sss -} =\begin{bmatrix}I_n & A_{\sss -}^{-1}B\\ \\\mathbf 0& -\overline{A}_{\sss -} R-2\alpha I_n\end{bmatrix},  \ \
G_2G_1H_{\sss -} =\begin{bmatrix}I_n & \overline{F}_{\alpha} \\ \\ \mathbf 0& \overline{E}_{\alpha}\end{bmatrix},
\end{align*}
with   
\begin{equation}\label{EFalpha}
E_{\alpha}=I_n + 2\alpha \overline{R}^{-1} A_{\sss -}^{-1}, \ \ \  
F_{\alpha}=-2\alpha \overline{A}_{\sss -}^{-1}\overline{B}\, \overline{R}^{-1} A_{\sss -}^{-1}. 
\end{equation}
Furthermore, since $A^{\HH}=A$ and $B^{\T}=B$,  we have 
\begin{align*}
E_{\alpha}^{\HH} &=I_{n} + 2\alpha A_{\sss -}^{-1}R^{-\T}=
I_n+2\alpha ( A_{\sss -}^{-1}-B \overline{A}_{\sss -}^{-1}\overline{B} )^{-1}=E_{\alpha}, \\
F_{\alpha}^{\T} &=-2\alpha \overline{A}_{\sss -}^{-1}
( I_n-\overline{B} A_{\sss -}^{-1}B \overline{A}_{\sss -}^{-1} )^{-1}\overline{B} A_{\sss -}^{-1}=F_{\alpha}, 
\end{align*}
i.e., $E_{\alpha}$  and $F_{\alpha}$ are Hermitian and complex symmetric, respectively.
Lastly, we have 
\[
(G H_{\sss\pm}) J (G H_{\sss\pm})^{\T}=\begin{bmatrix}&E_{\alpha}\\ \\-\overline{E}_{\alpha}\end{bmatrix},
\]
implying that $[G(H+\alpha I_n), \, G(H-\alpha I_n)]$ is a symplectic pair in SSF-1.
\end{proof}

The following   lemma summarizes the eigen-structure of   $(M_\alpha, \, L_\alpha)$ 
in relation to that of  $H$, neglecting the simple proof.

\begin{lemma}\label{eigenML0}
Let 
\begin{align}\label{eigenH}
H [X_1^{\T},\, X_2^{\T}]^{\T}=[X_1^{\T},\, X_2^{\T}]^{\T}S
\end{align}
for some $X_1, X_2\in\mathbb{C}^{n\times l}$,  $S\in\mathbb{C}^{l\times l}$ and 
$\alpha \notin \lambda(H)$, then we have
\[
M_{\alpha}[X_1^{\T},\, X_2^{\T}]^{\T}
=L_{\alpha}[X_1^{\T},\, X_2^{\T}]^{\T}S_{\alpha},
\]
with $S_{\alpha}\equiv (S-\alpha I_l)^{-1}(S+\alpha I_l)$,  
where $S_{\alpha}- \alpha I_l$ is nonsingular. 
\end{lemma}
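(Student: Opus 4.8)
The plan is to work directly from the eigen-relation \eqref{eigenH} and the Cayley-type definitions $M_\alpha = G(H+\alpha I_{2n})$ and $L_\alpha = G(H-\alpha I_{2n})$ from Theorem~\ref{theorem-SSF-1}. First I would write $Z \equiv [X_1^{\T},\, X_2^{\T}]^{\T}$ so that $HZ = ZS$, and observe that since $\alpha \notin \lambda(H)$, the matrix $H - \alpha I_{2n}$ is nonsingular; moreover, because $G$ is nonsingular, $L_\alpha = G(H-\alpha I_{2n})$ is nonsingular as well. Then $M_\alpha Z = G(H+\alpha I_{2n})Z = G(HZ + \alpha Z) = G(ZS + \alpha Z) = GZ(S+\alpha I_l)$, and likewise $L_\alpha Z = GZ(S - \alpha I_l)$.

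Next I would note that $S - \alpha I_l$ is nonsingular: its eigenvalues are $\mu - \alpha$ for $\mu \in \lambda(S) \subseteq \lambda(H)$, and $\alpha \notin \lambda(H)$ forces $\mu \neq \alpha$. Hence $GZ = L_\alpha Z (S-\alpha I_l)^{-1}$, and substituting into the expression for $M_\alpha Z$ gives
\[
M_\alpha Z = GZ(S+\alpha I_l) = L_\alpha Z (S-\alpha I_l)^{-1}(S+\alpha I_l) = L_\alpha Z S_\alpha,
\]
where $S_\alpha \equiv (S-\alpha I_l)^{-1}(S+\alpha I_l)$. Since $S - \alpha I_l$ and $S + \alpha I_l$ are both polynomials in $S$, they commute, so the order of the product in $S_\alpha$ is immaterial. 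This establishes the displayed identity $M_\alpha Z = L_\alpha Z S_\alpha$.

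It remains to check that $S_\alpha - \alpha I_l$ is nonsingular. Writing $S_\alpha - \alpha I_l = (S-\alpha I_l)^{-1}\bigl[(S+\alpha I_l) - \alpha(S-\alpha I_l)\bigr] = (S-\alpha I_l)^{-1}\bigl[(1-\alpha)S + (\alpha + \alpha^2)I_l\bigr]$, I would argue this is singular only if $(1-\alpha)S + \alpha(1+\alpha)I_l$ is singular. If $\alpha = 1$ this reduces to requiring $\alpha(1+\alpha) = 2 \neq 0$, which holds; if $\alpha \neq 1$ it is singular iff $-\alpha(1+\alpha)/(1-\alpha) \in \lambda(S)$. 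One then checks that the Cayley image of this value of $\mu$ — equivalently, one verifies that $\mu = -\alpha(1+\alpha)/(1-\alpha)$ is never an eigenvalue of $H$ under the standing hypotheses, or more simply that $S_\alpha - \alpha I_l$ is again a rational function of $S$ whose singularity would contradict $\alpha \notin \lambda(H)$ together with the structure already imposed.

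The routine computations here (matrix identities, commuting polynomials in $S$) are straightforward; the only point requiring care is the nonsingularity of $S_\alpha - \alpha I_l$, which is the real content of the last clause of the lemma and the main obstacle — it must be traced back to which scalar values are excluded from $\lambda(H)$ by the choice of $\alpha$ and by the assumption that $H$ has no purely imaginary or zero eigenvalues (or, in the general setting, by whatever is needed to make the Cayley transform well defined on the relevant spectral set). I expect the cleanest route is to express everything as rational functions of $S$ and reduce the claim to a statement purely about $\lambda(S) \subseteq \lambda(H)$.
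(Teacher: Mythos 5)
Your derivation of the identity $M_\alpha Z = L_\alpha Z S_\alpha$ is correct and is surely the argument the authors had in mind (the paper explicitly omits the proof as ``simple''): from $HZ=ZS$ one gets $M_\alpha Z = GZ(S+\alpha I_l)$ and $L_\alpha Z = GZ(S-\alpha I_l)$, and since $\lambda(S)\subseteq\lambda(H)$ and $\alpha\notin\lambda(H)$ the factor $S-\alpha I_l$ is invertible, so $GZ = L_\alpha Z(S-\alpha I_l)^{-1}$ and the claim follows. (Strictly speaking $\lambda(S)\subseteq\lambda(H)$ uses that $Z$ has full column rank, which is implicit in context.)

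The gap is in your treatment of the final clause. You correctly reduce the singularity of $S_\alpha-\alpha I_l$ to the condition $\mu=-\alpha(1+\alpha)/(1-\alpha)$ for some $\mu\in\lambda(S)$, but then you only gesture at ``one then checks \dots'' without closing the argument --- and it cannot be closed from the stated hypothesis $\alpha\notin\lambda(H)$ alone, since $\alpha(1+\alpha)/(\alpha-1)$ is a different scalar from $\alpha$ and nothing in the lemma excludes it from $\lambda(H)$. Two resolutions are available. First, the clause is almost certainly a typographical slip for ``$S-\alpha I_l$ is nonsingular,'' i.e.\ the justification that $S_\alpha$ is well defined, which you have already proved; note that nonsingularity of $S_\alpha-\alpha I_l$ is never invoked elsewhere in the paper, whereas that of $S-\alpha I_l$ is essential. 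Second, if one insists on the literal statement, it holds under the paper's standing choice $\alpha>\|H\|_F\geq\rho(H)$ together with $\alpha\geq 1$, because then $\left|\alpha(1+\alpha)/(\alpha-1)\right|>\alpha>\rho(H)$, so this value cannot lie in $\lambda(S)$; but this requires importing assumptions on $\alpha$ beyond $\alpha\notin\lambda(H)$. Either way, you should state explicitly which extra hypothesis you are using rather than leaving the verification open.
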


Intrinsically,  the DA proposed below   
requires both $E_{\alpha}$ and 
$I_n-F_{\alpha}\overline{F}_{\alpha}$ to be nonsingular.  
Lemma~\ref{l34} and Theorems~\ref{lemma-E0} and~\ref{lemma-F0} below indicate that a 
small $\alpha$ could achieve such a goal.  
Moreover, for $\lambda\in\lambda(H)$, we have 
$(\lambda+\alpha)/(\lambda-\alpha) \in \lambda (S_{\alpha})$. For the efficiency of  the DA, we desire a small 
$\left|(\lambda+\alpha)/(\lambda-\alpha)\right|$ for $\Re(\lambda) <0$. Hence  when $|\alpha|>\rho(H)$ 
(the spectral radius of $H$), we desire 
$|\alpha|$  to be minimized.

\begin{lemma} \label{l34} 
Let $\alpha>\|H\|_F$, then $\alpha I_n-A$ is positive definite and 
$R\equiv I_n-(\alpha I_n-\overline{A})^{-1}\overline{B}(\alpha I_n-A)^{-1}B$ is nonsingular, with 
$\|R^{-1}\|_2\leq \left[ 1-\|(\alpha I_n-A)^{-1}\|_2^2\|B\|_2^2\right]^{-1}$.
\end{lemma}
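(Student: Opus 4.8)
The plan is to exploit the norm inequality $\|H\|_F \ge \|A\|_F \ge \|A\|_2$, so that $\alpha > \|H\|_F$ forces $\alpha > \|A\|_2 = \rho(A)$ (recall $A$ is Hermitian, so its spectral radius equals its $2$-norm). Since every eigenvalue $\mu$ of $A$ then satisfies $\alpha - \mu > 0$, the Hermitian matrix $\alpha I_n - A$ has only positive eigenvalues and is therefore positive definite; in particular it is nonsingular, so $R$ is well-defined. Likewise $\overline{A}$ is Hermitian with $\|\overline A\|_2 = \|A\|_2 < \alpha$, so $\alpha I_n - \overline A$ is positive definite as well.

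Next I would estimate the norm of the perturbation term. Write $P \equiv (\alpha I_n - \overline A)^{-1}\overline B(\alpha I_n - A)^{-1}B$, so $R = I_n - P$. Using submultiplicativity, $\|P\|_2 \le \|(\alpha I_n-\overline A)^{-1}\|_2\,\|\overline B\|_2\,\|(\alpha I_n - A)^{-1}\|_2\,\|B\|_2$. Since $\|\overline B\|_2 = \|B\|_2$ and, as noted, $\|(\alpha I_n - \overline A)^{-1}\|_2 = \|(\alpha I_n - A)^{-1}\|_2$ (the two matrices are complex conjugates of each other, hence share singular values), this gives $\|P\|_2 \le \|(\alpha I_n-A)^{-1}\|_2^2\,\|B\|_2^2$. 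The key observation is that this quantity is strictly less than $1$: from $\|H\|_F^2 \ge \|A\|_F^2 + \|B\|_F^2 \ge \|A\|_2^2 + \|B\|_2^2$ one could try to get a handle, but the cleaner route is to note $\|(\alpha I_n - A)^{-1}\|_2 = 1/(\alpha - \lambda_{\max}(A)) \le 1/(\alpha - \|A\|_2)$ and then bound $\|B\|_2/(\alpha - \|A\|_2)$. Indeed $\alpha - \|A\|_2 > \|H\|_F - \|A\|_F \ge$ (using $\|H\|_F^2 = \|A\|_F^2 + \|B\|_F^2$, since the $(1,1)$ and $(2,2)$ blocks have equal Frobenius norm and similarly the off-diagonal blocks --- more precisely $\|H\|_F^2 = 2\|A\|_F^2 + 2\|B\|_F^2$) a quantity that dominates $\|B\|_2$. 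I would make this comparison precise: from $\alpha^2 > \|H\|_F^2 \ge \|A\|_2^2 + \|B\|_2^2$ one deduces $(\alpha - \|A\|_2)^2 = \alpha^2 - 2\alpha\|A\|_2 + \|A\|_2^2 > \|B\|_2^2$ whenever $\alpha^2 + \|A\|_2^2 \ge 2\alpha \|A\|_2 + \|B\|_2^2$, i.e. $(\alpha - \|A\|_2)^2 \ge \|B\|_2^2$, which is exactly what we want; so the chain closes and $\|(\alpha I_n - A)^{-1}\|_2^2\|B\|_2^2 < 1$.

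With $\|P\|_2 < 1$ established, the Neumann series shows $R = I_n - P$ is nonsingular and $\|R^{-1}\|_2 = \|(I_n - P)^{-1}\|_2 \le (1 - \|P\|_2)^{-1} \le \bigl[1 - \|(\alpha I_n - A)^{-1}\|_2^2\|B\|_2^2\bigr]^{-1}$, which is the claimed bound.

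The main obstacle is pinning down the correct Frobenius-norm bookkeeping for $H$ so that the strict inequality $\|(\alpha I_n - A)^{-1}\|_2^2\|B\|_2^2 < 1$ genuinely follows from $\alpha > \|H\|_F$; one must be careful that $\|H\|_F^2$ contains the contributions of all four blocks $A, B, -\overline B, -\overline A$, so in fact $\|H\|_F^2 = 2\|A\|_F^2 + 2\|B\|_F^2 \ge 2(\|A\|_2^2 + \|B\|_2^2)$, giving even more room than needed. The rest is routine: identifying $2$-norms of conjugated matrices, applying the Neumann series, and collecting the estimate.
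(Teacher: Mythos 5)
Your proof follows essentially the same route as the paper's: positive definiteness of $\alpha I_n-A$ from $\alpha>\|H\|_F\ge\|A\|_F\ge\lambda_{\max}(A)$, the bound $\|(\alpha I_n-A)^{-1}\|_2^2\|B\|_2^2<1$ obtained from $\alpha>\|H\|_F\ge\|A\|_F+\|B\|_F$ (which follows from $\|H\|_F^2=2\|A\|_F^2+2\|B\|_F^2\ge(\|A\|_F+\|B\|_F)^2$), and then the Neumann series for $R=I_n-P$. The only blemish is the middle sentence that deduces $(\alpha-\|A\|_2)^2>\|B\|_2^2$ ``whenever $(\alpha-\|A\|_2)^2\ge\|B\|_2^2$,'' which is circular as written and cannot be rescued from $\alpha^2>\|A\|_2^2+\|B\|_2^2$ alone, but your closing paragraph supplies the correct justification via the factor of two in $\|H\|_F^2$, so the assembled argument is sound.
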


\begin{proof}
When $\|A\|_F<\|H\|_F<\alpha$, $\alpha I_n-A$ is positive definite Hermitian.
Since  $\alpha>\|H\|_F\geq\|A\|_F+\|B\|_F$, we have 
$(\alpha-\omega_1)^{-1} \leq (\alpha-\|A\|_F)^{-1} < \|B\|_F^{-1}$ with $\omega_1$
being the largest eigenvalue of $A$. In addition, with 
$\|(\alpha I_n-A)^{-1}\|_2= (\alpha-\omega_1)^{-1}$, we have    
$\|(\alpha I_n-A)^{-1}B\|_2\leq\|(\alpha I_n-A)^{-1}\|_2\|B\|_2= (\alpha-\omega_1)^{-1} \|B\|_2<1$.   
This implies $\|(\alpha I_n-\overline{A})^{-1}\overline{B}(\alpha I_n-A)^{-1}B\|_2\leq
\|(\alpha I_n-A)^{-1}\|_2^2\|B\|_2^2<1$ and  our results. 
\end{proof}

\begin{theorem}\label{lemma-E0}
As defined in 
\eqref{EFalpha}, $E_{\alpha}$ is nonsingular when $\alpha>\|H\|_F$. 
\end{theorem}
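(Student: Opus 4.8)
The plan is to exploit the explicit formula $E_{\alpha}=I_n+2\alpha\overline{R}^{-1}A_{\sss-}^{-1}$ together with the Hermitian identity
\[
E_{\alpha}^{\HH}=I_n+2\alpha\bigl(A_{\sss-}^{-1}-B\overline{A}_{\sss-}^{-1}\overline{B}\bigr)^{-1},
\]
derived already in the proof of Theorem~\ref{theorem-SSF-1}. Since $E_\alpha$ is Hermitian, it suffices to show it is positive definite (or, failing that, to show no vector lies in its null space). I would write $E_\alpha$ in the symmetrised form $E_\alpha = \bigl(A_{\sss-}^{-1}-B\overline{A}_{\sss-}^{-1}\overline{B}\bigr)^{-1}\bigl[(A_{\sss-}^{-1}-B\overline{A}_{\sss-}^{-1}\overline{B})+2\alpha I_n\bigr]$ — using $A_{\sss-}=A-\alpha I_n$, the bracket is $A_{\sss-}^{-1}+2\alpha I_n - B\overline{A}_{\sss-}^{-1}\overline{B}$. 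So the nonsingularity of $E_\alpha$ reduces to that of the Hermitian matrix $A_{\sss-}^{-1}+2\alpha I_n - B\overline{A}_{\sss-}^{-1}\overline{B}$, after we know $A_{\sss-}^{-1}-B\overline{A}_{\sss-}^{-1}\overline{B}$ is invertible (which follows from Lemma~\ref{l34}, since that matrix equals $A_{\sss-}^{-1}R^{\HH}$ up to a known factor, with $R$ nonsingular).

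The main work is then a definiteness estimate for $T_\alpha \equiv A_{\sss-}^{-1}+2\alpha I_n - B\overline{A}_{\sss-}^{-1}\overline{B}$ when $\alpha>\|H\|_F$. From Lemma~\ref{l34}, $A_{\sss-}=A-\alpha I_n$ is negative definite with $-A_{\sss-}\succeq(\alpha-\omega_1)I_n\succ 0$, where $\omega_1$ is the largest eigenvalue of $A$; hence $A_{\sss-}^{-1}$ is negative definite with $\|A_{\sss-}^{-1}\|_2=(\alpha-\omega_1)^{-1}$. Also $\|B\overline{A}_{\sss-}^{-1}\overline{B}\|_2\le\|B\|_2^2(\alpha-\omega_1)^{-1}$. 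Therefore, for any unit vector $v$,
\[
v^{\HH}T_\alpha v \ge 2\alpha - \|A_{\sss-}^{-1}\|_2 - \|B\overline{A}_{\sss-}^{-1}\overline{B}\|_2
\ge 2\alpha - \frac{1+\|B\|_2^2}{\alpha-\omega_1}.
\]
Since $\alpha>\|H\|_F\ge\|A\|_F\ge\omega_1$ and $\alpha>\|H\|_F\ge\|B\|_F\ge\|B\|_2$, one checks $\alpha-\omega_1>0$ and $\frac{1+\|B\|_2^2}{\alpha-\omega_1}<\frac{1+\alpha^2}{\alpha-\alpha}$ is not quite the right bound — so I would instead argue more carefully: $\alpha(\alpha-\omega_1)\ge\alpha(\alpha-\|A\|_F)\ge\alpha\|B\|_F$ (using $\alpha\ge\|A\|_F+\|B\|_F$ from $\|H\|_F\ge\|A\|_F+\|B\|_F$), and $\alpha\|B\|_F\ge\|B\|_F^2\ge\|B\|_2^2$, while separately $\alpha(\alpha-\omega_1)\ge\alpha\cdot 1$ whenever $\alpha-\omega_1\ge 1$; combining gives $2\alpha(\alpha-\omega_1)\ge 1+\|B\|_2^2$ in the regime of interest, so $v^{\HH}T_\alpha v>0$. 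Thus $T_\alpha\succ 0$, in particular nonsingular, and $E_\alpha$ is nonsingular.

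The step I expect to be the main obstacle is the clean bookkeeping of the scalar inequality $2\alpha(\alpha-\omega_1)\ge 1+\|B\|_2^2$: the hypothesis $\alpha>\|H\|_F$ gives $\alpha>\|A\|_F+\|B\|_F$ but not automatically $\alpha-\omega_1\ge 1$, so a short case split (or a sharper use of $\|H\|_F^2=\|A\|_F^2+\|B\|_F^2+\cdots$) is needed. An alternative cleaner route, which I would try first, is to avoid definiteness entirely: suppose $E_\alpha v=0$ for $v\neq 0$; then $2\alpha A_{\sss-}^{-1}v=-\overline{R}v$, i.e. $2\alpha v = -A_{\sss-}\overline{R}v$, and taking the inner product with $v$ and using $A_{\sss-}^{\HH}=A_{\sss-}\prec 0$ together with the bound $\|R-I\|_2=\|(\alpha I-\overline A)^{-1}\overline B(\alpha I-A)^{-1}B\|_2<1$ from Lemma~\ref{l34} forces a contradiction with $\alpha>0$; this localises the estimate and is likely the shortest argument.
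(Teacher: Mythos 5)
Your overall strategy---factor $E_\alpha=M^{-1}(M+2\alpha I_n)$ and prove definiteness of the bracket---is sound, but the matrix you actually analyze is the wrong one, and the definiteness claim you make for it is false. The identity $E_{\alpha}^{\HH}=I_n+2\alpha(A_{\sss-}^{-1}-B\overline{A}_{\sss-}^{-1}\overline{B})^{-1}$ that you quote from the proof of Theorem~\ref{theorem-SSF-1} contains a typo: since $\overline{R}^{-1}A_{\sss-}^{-1}=(A_{\sss-}\overline{R})^{-1}$ and $A_{\sss-}\overline{R}=A_{\sss-}+B(\alpha I_n-\overline{A})^{-1}\overline{B}=A_{\sss-}-B\overline{A}_{\sss-}^{-1}\overline{B}$, the correct resolvent is $(A_{\sss-}-B\overline{A}_{\sss-}^{-1}\overline{B})^{-1}$, consistent with the form $E_\alpha=I_n-2\alpha[(\alpha I_n-A)-B(\alpha I_n-\overline A)^{-1}\overline B]^{-1}$. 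Consequently your $T_\alpha=A_{\sss-}^{-1}+2\alpha I_n-B\overline{A}_{\sss-}^{-1}\overline{B}$ does not govern the invertibility of $E_\alpha$, and it is genuinely indefinite under the hypothesis: for $A=B=0$ and $0<\alpha\le 1/\sqrt2$ one has $T_\alpha=(2\alpha-\alpha^{-1})I_n\preceq0$ while $E_\alpha=-I_n$ is nonsingular. This is also why your scalar inequality $2\alpha(\alpha-\omega_1)\ge 1+\|B\|_2^2$ is dimensionally inhomogeneous and cannot follow from $\alpha>\|H\|_F$; the case split you sketch covers only $\alpha-\omega_1\ge1$, and the complementary case is not merely awkward but false. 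The fallback argument has a related gap: from $2\alpha\|v\|^2=-v^{\HH}A_{\sss-}v-v^{\HH}A_{\sss-}(\overline R-I_n)v$, the two ingredients you invoke ($A_{\sss-}\prec0$ and $\|\overline R-I_n\|_2<1$) yield only $2\alpha<2(\alpha-\omega_n)$, i.e.\ $\omega_n<0$, which is no contradiction.

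Both routes close once the formula is corrected, and then they are shorter than the paper's proof. With the right bracket, $M+2\alpha I_n=(\alpha I_n+A)+B(\alpha I_n-\overline A)^{-1}\overline B$ is the sum of a positive definite Hermitian matrix (as $\alpha>\|A\|_2$) and a positive semidefinite one (since $v^{\HH}B(\alpha I_n-\overline A)^{-1}\overline Bv=w^{\HH}(\alpha I_n-\overline A)^{-1}w\ge0$ with $w=\overline Bv$), hence nonsingular with no further estimates; the same observation, namely $-v^{\HH}A_{\sss-}(\overline R-I_n)v=-v^{\HH}B(\alpha I_n-\overline A)^{-1}\overline Bv\le0$, rescues your fallback by forcing $2\alpha\le\alpha-\omega_n$, contradicting $\alpha>\|A\|_F\ge-\omega_n$. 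The paper instead sets $T=(\alpha I_n-A)-B(\alpha I_n-\overline A)^{-1}\overline B$, notes that $E_\alpha$ is singular iff $2\alpha\in\lambda(T)$, and proves $\|T\|_2<2\alpha$ via the triangle inequality $\|T\|_2\le(\alpha-\omega_n)+\|B\|_2^2/(\alpha-\omega_1)$ followed by scalar estimates built on $\alpha^2>\|H\|_F^2=2(\|A\|_F^2+\|B\|_F^2)$ and $(\omega_1-\omega_n)^2\le2\|A\|_F^2$---that is, it treats the semidefinite term as a perturbation of unknown sign, which is precisely what creates the bookkeeping you were worried about. Fix the formula and keep your structure; the rest of the difficulty disappears.
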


\begin{proof}
Denote the largest and smallest eigenvalues of $A$ by $\omega_1$ and $\omega_n$, respectively.
With $\alpha>\|H\|_F$,  we have  $\|\alpha I_n-A\|_2=\alpha-\omega_n$ and
$\|(\alpha I_n-\overline{A})^{-1}\|_2= (\alpha-\omega_1)^{-1}$, yielding
$\|(\alpha I_n-A)-B(\alpha I_n-\overline{A})^{-1}\overline{B}\|_2
\leq (\alpha-\omega_n)+ (\alpha-\omega_1)^{-1} \|B\|_2^2$.
We also have  
\begin{align*}
&(\alpha-\omega_n)(\alpha-\omega_1)+\|B\|_2^2-2\alpha(\alpha-\omega_1)\\
	=&-\left(\alpha+\frac{\omega_n-\omega_1}{2}\right)^2+\frac{(\omega_1+\omega_n)^2}{4}+\|B\|_2^2
	<-\left(\alpha+\frac{\omega_n-\omega_1}{2}\right)^2+\frac{\alpha^2-\|A\|_F^2}{2}, 
\end{align*}
as $\alpha^2>\|H\|_F^2=2(\|B\|_F^2+\|A\|_F^2)$.
From the fact that  $2\|A\|_F^2\geq2(\omega_1^2+\omega_n^2)\geq(\omega_n-\omega_1)^2$, we obtain 
\[
\frac{\alpha^2-\|A\|_F^2}{2}-\left(\alpha+\frac{\omega_n-\omega_1}{2}\right)^2\leq -\frac{(\alpha+\omega_n-\omega_1)^2}{2}<0.
\]
This implies $(\alpha-\omega_n)(\alpha-\omega_1)+\|B\|_2^2-2\alpha(\alpha-\omega_1)<0$.
We deduce $\|(\alpha I_n-A)-B(\alpha I_n-\overline{A})^{-1}\overline{B}\|_2<2\alpha$,
thus $2\alpha \not\in \lambda \{(\alpha I_n-A)-B(\alpha I_n-\overline{A})^{-1}\overline{B} \}$.
Therefore, $E_{\alpha}=I_n-2\alpha \left[ (\alpha I_n-A)-B(\alpha I_n-\overline{A})^{-1}\overline{B} \right]^{-1}$ 
is nonsingular.
\end{proof}

Complementing Theorem~\ref{lemma-E0}, we have   
$\lambda (E_{\alpha})$ lies outside $[0,2]$ 
when $\alpha>\|H\|_F$ 
because  
the moduli of    all eigenvalues of 
$\left[ (\alpha I_n-A)-B(\alpha I_n-\overline{A})^{-1}\overline{B} \right]^{-1}$ are   greater than $(2\alpha)^{-1}$.

\begin{theorem}\label{lemma-F0}
Assume that $\alpha> \varrho\|H\|_F+\frac{1}{2}(\varrho-1)^{-1} \|B\|_F$  with $\varrho>1$. Then $\|F_{\alpha}\|_2<1$ 
with $F_{\alpha}$ defined in \eqref{EFalpha}.
\end{theorem}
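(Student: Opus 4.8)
The plan is to estimate $\|F_\alpha\|_2$ directly from the closed form $F_{\alpha}=-2\alpha\,\overline{A}_{\sss -}^{-1}\overline{B}\,\overline{R}^{-1}A_{\sss -}^{-1}$ in~\eqref{EFalpha} (with $A_{\sss -}=A-\alpha I_n$), reducing the claim to a scalar inequality whose validity is exactly what the hypothesis on $\alpha$ is calibrated to guarantee.

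First I would note that the hypothesis forces $\alpha>\varrho\|H\|_F>\|H\|_F$ since $\varrho>1$, so Lemma~\ref{l34} applies: $\alpha I_n-A$ is positive definite and $R$ (hence $\overline R$) is invertible, so $F_\alpha$ is well defined. Conjugation is an isometry for the spectral norm, so $\|\overline B\|_2=\|B\|_2$, $\|\overline{A}_{\sss -}^{-1}\|_2=\|A_{\sss -}^{-1}\|_2$ and $\|\overline{R}^{-1}\|_2=\|R^{-1}\|_2$. Writing $\omega_1$ for the largest eigenvalue of the Hermitian $A$ (so $\omega_1\le\|A\|_2\le\|A\|_F<\alpha$), one has $\|A_{\sss -}^{-1}\|_2=\|(\alpha I_n-A)^{-1}\|_2=(\alpha-\omega_1)^{-1}$, while Lemma~\ref{l34} gives $\|R^{-1}\|_2\le[1-\|B\|_2^2/(\alpha-\omega_1)^2]^{-1}$ with positive right-hand side. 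Submultiplicativity of $\|\cdot\|_2$ then yields
\[
\|F_\alpha\|_2\;\le\;\frac{2\alpha\|B\|_2}{(\alpha-\omega_1)^2-\|B\|_2^2},
\]
the denominator being positive by Lemma~\ref{l34}.

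Next I would reduce $\|F_\alpha\|_2<1$ to the scalar inequality $(\alpha-\omega_1)^2>\|B\|_2^2+2\alpha\|B\|_2$. Since $\omega_1\le\|A\|_F=:a$, $\|B\|_2\le\|B\|_F=:b$, and $x\mapsto x^2+2\alpha x$ is increasing on $[0,\infty)$, it suffices to show $(\alpha-a)^2>b^2+2\alpha b$, i.e.\ $\alpha^2-2(a+b)\alpha+(a^2-b^2)>0$. Viewing this as a quadratic in $\alpha$ and using $a^2-b^2=(a+b)(a-b)$, the discriminant is $8b(a+b)$ and the larger root is $(a+b)+\sqrt{2b(a+b)}$, so it is enough to prove $\alpha>(a+b)+\sqrt{2b(a+b)}$.

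The final step, where the parameter $\varrho$ enters, is to show the hypothesis delivers exactly this. From $\|H\|_F^2=2(a^2+b^2)\ge(a+b)^2$ we get $a+b\le\|H\|_F$, hence $\sqrt{2b(a+b)}\le\sqrt{2b\|H\|_F}$; then the weighted arithmetic--geometric mean inequality, split as $2b\|H\|_F=\bigl(\tfrac{b}{\varrho-1}\bigr)\bigl(2(\varrho-1)\|H\|_F\bigr)$, gives $\sqrt{2b\|H\|_F}\le\tfrac{b}{2(\varrho-1)}+(\varrho-1)\|H\|_F$, so $(a+b)+\sqrt{2b(a+b)}\le\varrho\|H\|_F+\tfrac{\|B\|_F}{2(\varrho-1)}<\alpha$. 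I expect the only real subtlety to be recognizing that precisely this splitting in the AM--GM step makes the coefficient $\varrho$ and the term $\tfrac12(\varrho-1)^{-1}\|B\|_F$ in the hypothesis sharp; everything else is routine norm bookkeeping built on Lemma~\ref{l34} and the formula~\eqref{EFalpha}.
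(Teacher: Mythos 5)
Your proof is correct and follows essentially the same route as the paper's: the same bound $\|F_\alpha\|_2\le 2\alpha\|B\|_2/[(\alpha-\omega_1)^2-\|B\|_2^2]$, the same reduction to the quadratic inequality $(\alpha-\omega_1)^2-2\alpha\|B\|_2-\|B\|_2^2>0$ with threshold of the form $c+\sqrt{2bc}$, and the same weighted AM--GM split calibrated by $\varrho$ (the paper's $\eta^2=\tfrac12(\varrho-1)^{-1}$ is exactly your splitting). Your only deviations are cosmetic: passing to $\|A\|_F,\|B\|_F$ before rather than after the AM--GM step, which also lets you avoid the paper's separate case $\|B\|_2+\omega_1\le 0$.
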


\begin{proof}
Let $\omega_1$ be  the largest eigenvalue  of $A$.
Then  it holds that  
\[
\|(\alpha I_n-A)^{-1}\|_2=(\alpha-\omega_1)^{-1}, \ \ \ 
\|F_{\alpha}\|_2 \leq \frac{2\alpha\|B\|_2}{(\alpha-\omega_1)^2-\|B\|_2^2}.
\]
We shall show that  $\|B\|_2/[(\alpha-\omega_1)^2-\|B\|_2^2]$, 
in the right-hand-side of  the inequality above, 
is bounded strictly from above by $(2\alpha)^{-1}$ when
$\alpha> \varrho\|H\|_F+\frac{1}{2} (\varrho-1)^{-1} \|B\|_F$, or equivalently
\begin{align}\label{alpha-ineq}
(\alpha-\omega_1)^2-2\alpha\|B\|_2-\|B\|_2^2>0.
\end{align}
If $\|B\|_2+\omega_1\leq0$,  \eqref{alpha-ineq} is apparently valid.
When $\|B\|_2+\omega_1>0$ and considering the left-hand-side of \eqref{alpha-ineq} as a quadratic in $\alpha$, \eqref{alpha-ineq} holds if and only if
$\alpha>\|B\|_2+\omega_1+\sqrt{2\|B\|_2 (\|B\|_2+\omega_1)}$.
With $\eta > 0$ and $\eta_1 \equiv 1/(2\eta^2)$, from the equality 
\[ \sqrt{2\|B\|_2(\|B\|_2+\omega_1)}
=\left[ \eta\sqrt{\|B\|_2}+\sqrt{\eta_1 (\|B\|_2+\omega_1)} \right]^2
-\eta^2\|B\|_2-\eta_1 \left(\|B\|_2+\omega_1 \right), 
\]
we deduce that  
\begin{align*}
&\|B\|_2+\omega_1+\sqrt{2\|B\|_2(\|B\|_2+\omega_1)}\\
=& \left[\eta\sqrt{\|B\|_2}+ \sqrt{\eta_1(\|B\|_2+\omega_1)} \right]^2+\left(1-\eta^2- \eta_1 \right)\|B\|_2 
 + \left(1-\eta_1 \right)\omega_1 \\
 \leq & \ \ \eta^2\|B\|_2+\left(1+\eta_1 \right)(\|B\|_2+\omega_1).
\end{align*}
With $\eta^2=\frac{1}{2}(\varrho-1)^{-1}$, we get   
$\eta^2\|B\|_2+\left(1+\eta_1 \right)(\|B\|_2+\omega_1)<\alpha$, thus our result.
\end{proof}

Theorem~\ref{lemma-F0} demonstrates  that when $\varrho$ is chosen as some moderate real positive scalar,
such as $\sqrt{2}$, then the corresponding lower  bound  will be a good  
candidate for the initial $\alpha$. Additionally, 
 when the condition in Theorem~\ref{lemma-F0} is satisfied, $E_{\alpha}$ 
and $I_n-F_{\alpha}\overline{F}_{\alpha}$ are nonsingular.  

Although Theorems~\ref{lemma-E0} and \ref{lemma-F0} show that a small $\alpha$ is sufficient for 
$E_{\alpha}$ and $I_n-F_{\alpha}\overline{F}_{\alpha}$ to be nonsingular, 
the minimization of $|(\lambda+\alpha)/(\lambda-\alpha)|$ for an optimal $\alpha$ 
deserves further consideration, 
for the fast convergence of the DA.  
For the optimal $\alpha$,  \cite{hlll} proposed some remarkable techniques for the suboptimal solution  
$\alpha_{opt}:=\argmin_{\alpha>0} \max_{\Re(\lambda)<0} \left|\frac{\lambda +\alpha}{\lambda -\alpha}\right|$.  
With some prior knowledge (in $\mathcal{D}$ below) of the eigenvalues of $H$, 
\cite{hlll} essentially solves the following optimization problem:
\[
	\alpha_{sopt}:=\argmin_{\alpha>0} \max_{\zeta \in \mathcal{D}}
	\left|\frac{\zeta+\alpha}{\zeta-\alpha}\right|, 
	\qquad  
	\text{where} \quad     \{\lambda\in \lambda(H): \Re(\lambda)<0\}\subset \mathcal{D}\subset \mathbb{C}_{-}. 
\]
With $\mathcal{D}$ being an interval, a disk, an ellipse or a rectangle, \cite[Theorem~2.1]{hlll} 
considers the suboptimal solution $\alpha_{sopt}$. 
The technique can be applied to  \eqref{MLalpha} 
for a suboptimal $\alpha$ when the distance between 
$\{\lambda\in \lambda(H): \Re(\lambda)<0\}$ and the imaginary axis is known.

From now on, we will always assume $\alpha>0$  such that $\alpha I_{2n}-H$, 
 $\alpha I_n-A$,   $I_n-(\alpha I_n-\overline{A})^{-1}\overline{B}(\alpha I_n-A)^{-1}B$
 and $E_{\alpha}$ are nonsingular and also assume that  $1\notin\sigma(F_{\alpha})$ 
(before the discussion in Section~3.3). 

\subsection{Algorithm}

We now construct a new symplectic pair
by applying the doubling action to a given   symplectic pair $(M, L)$ in SSF-1 in \eqref{MLalpha}; i.e., for $E^{\HH}=E$, 
$F^{\T}=F\in\mathbb{C}^{n\times n}$, we have 
\begin{align}\label{ML}
M=\begin{bmatrix}E& \mathbf 0\\& \\F&I_n\end{bmatrix}, \qquad
L=\begin{bmatrix}I_n&\overline{F}\\& \\ \mathbf 0&\overline{E}\end{bmatrix}. 
\end{align}

\begin{theorem}\label{theorem-doubling-trans1} 
For $M,L$ in \eqref{ML} with $1\notin\sigma(F)$, there exists
$[M_{*},\, L_{*}] \in \mathcal{N}(M, L)$ 
such that $(\widetilde{M},\, \widetilde{L}) = (M_{*}M, \, L_{*}L)$,
from the doubling transformation of $(M,\, L)$,
is a symplectic pair in SSF-1.   
Furthermore,   $[\widetilde{M},\, \widetilde{L}]$ retains the SSF-1:  
\[
\widetilde{M}=\begin{bmatrix}\widetilde{E}& \mathbf 0 \\&\\ \widetilde{F}& I_n\end{bmatrix},  \qquad
\widetilde{L}=\begin{bmatrix}I_n& \overline{\widetilde{F}}\\&\\ \mathbf  0& \overline{\widetilde{E}}\end{bmatrix}, 
\]
with $\widetilde{E}^{\HH}=\widetilde{E}, \widetilde{F}^{\T}=\widetilde{F}\in\mathbb{C}^{n\times n}$. 
\end{theorem}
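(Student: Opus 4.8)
The plan is to construct $[M_*,\,L_*]$ explicitly in block form, exploiting the special $2\times 2$ block shapes of $M$ and $L$ in \eqref{ML}, and then to read off $\widetilde E,\widetilde F$ from the products $M_*M$ and $L_*L$, checking the structure constraints last.

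\textbf{Step 1: Build the annihilator pair.} I need $[M_*,\,L_*][L^\T,\,-M^\T]^\T=0$, i.e. $M_*L^\T=L_*M^\T$, together with $\rank[M_*,\,L_*]=2n$. Writing out $L^\T=\begin{bmatrix}I_n&\mathbf 0\\ \overline F&\overline E\end{bmatrix}$ and $M^\T=\begin{bmatrix}E&F\\ \mathbf 0&I_n\end{bmatrix}$, I would look for $M_*=\begin{bmatrix}I_n&-\overline F\\ \mathbf 0&-\overline E\end{bmatrix}$-type and $L_*=\begin{bmatrix}E&\mathbf 0\\ -F&I_n\end{bmatrix}$-type blocks — essentially the ``cross'' choice $M_*=\begin{bmatrix}\ast&\ast\\ \ast&\ast\end{bmatrix}$ that makes the middle terms telescope. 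Concretely I expect the standard SDA choice: factor the ``gap'' through the matrix $I_n-F\overline F$ (which is where $1\notin\sigma(F)$ enters, guaranteeing invertibility, since $\sigma(F\overline F)=\sigma(F)^2$ after noting $F^\T=F$). A workable pair is
\[
M_*=\begin{bmatrix} (I_n-\overline F F)^{-1} & \mathbf 0\\[2pt] -F(I_n-\overline F F)^{-1} & I_n\end{bmatrix}\!\begin{bmatrix} I_n & -\overline F\\[2pt] \mathbf 0 & -\overline E\end{bmatrix}\!,\quad
L_*=\begin{bmatrix} (I_n-\overline F F)^{-1} & \mathbf 0\\[2pt] -F(I_n-\overline F F)^{-1} & I_n\end{bmatrix}\!\begin{bmatrix} E & \mathbf 0\\[2pt] -F & I_n\end{bmatrix}\!,
\]
or something equivalent up to a common left factor; the precise form I would pin down by forcing $M_*L^\T=L_*M^\T$. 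The common nonsingular left factor guarantees the rank condition automatically once one of the right factors is checked to have full rank.

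\textbf{Step 2: Form $\widetilde M=M_*M$ and $\widetilde L=L_*L$ and extract $\widetilde E,\widetilde F$.} This is direct block multiplication. I expect the $(1,2)$ block of $\widetilde M$ and the $(2,1)$ block of $\widetilde L$ to vanish (that is the whole point of the annihilator construction, and Lemma~\ref{lemme-doubling}(1) guarantees $(\widetilde M,\widetilde L)$ is symplectic, which constrains the shape), and the remaining blocks to give
\[
\widetilde E = E(I_n-\overline F F)^{-1}E,\qquad
\widetilde F = F + E(I_n-\overline F F)^{-1}\overline F\, E^{\ast}\ \text{(schematically)},
\]
with the analogous conjugated expressions landing in $\widetilde L$ automatically because $L$'s blocks are the conjugates of $M$'s. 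The key structural point is that the $(2,2)$ block of $\widetilde M$ should come out as $I_n$ and the $(1,1)$ block of $\widetilde L$ as $I_n$; if the raw multiplication instead produces a nonsingular non-identity block there, I would absorb it into the common left factor $M_*,L_*$ (it is a free choice within $\mathcal N(M,L)$), i.e. renormalize so that those blocks become $I_n$.

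\textbf{Step 3: Verify $\widetilde E^\HH=\widetilde E$ and $\widetilde F^\T=\widetilde F$.} Using $E^\HH=E$, $F^\T=F$, and the elementary identity $(I_n-\overline F F)^{-\HH}=(I_n-F^\HH\overline F{}^\HH)^{-1}=(I_n-\overline F F)^{-1}$ (since $F^\HH=\overline F$ and $\overline F{}^\HH=F$), Hermitian symmetry of $\widetilde E$ follows by transpose-conjugating the product. For $\widetilde F$, I would use $F^\T=F$ together with $\big((I_n-\overline F F)^{-1}\overline F\big)^\T=\overline F(I_n-F\overline F)^{-1}=(I_n-\overline F F)^{-1}\overline F$ (the last step being the ``push-through'' identity $\overline F(I_n-F\overline F)^{-1}=(I_n-\overline F F)^{-1}\overline F$). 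These are short computations once the formulas from Step 2 are in hand.

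\textbf{Anticipated main obstacle.} The routine block algebra is harmless; the real care is in (a) choosing the correct $[M_*,\,L_*]\in\mathcal N(M,L)$ so that \emph{both} the SSF-1 shape (identity blocks in the right corners, zero blocks in the other corners) \emph{and} the Hermitian/complex-symmetric constraints survive simultaneously — the first attempt may give a non-identity $(2,2)$ block that must be normalized away — and (b) correctly invoking $1\notin\sigma(F)$: one must argue $I_n-\overline F F$ (equivalently $I_n-F\overline F$) is nonsingular, which follows since its eigenvalues are $1-\mu$ for $\mu\in\sigma(\overline F F)$ and, because $F^\T=F$ so $\overline F=F^\HH$, $\overline F F=F^\HH F$ has eigenvalues $\sigma(F)^2$; hence $1\notin\sigma(F)$ need not immediately give it — in fact one needs $1\notin\sigma(F)$ in the sense that no singular value of $F$ equals $1$, which is exactly the hypothesis $1\notin\sigma(F)$ as written. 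Getting that invertibility argument stated cleanly, and then confirming the normalization in Step 2 is consistent with symplecticity via Lemma~\ref{lemme-doubling}(1), is where I would spend the most attention.
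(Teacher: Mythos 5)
Your overall strategy (explicitly construct $[M_*,\,L_*]$, multiply out, read off $\widetilde E,\widetilde F$, then verify the symmetries via push-through identities) is exactly the paper's, and your treatment of the invertibility of $I_n-\overline F F$ via $\overline F=F^{\HH}$ and singular values is fine. However, Step~1 contains a genuine error that would derail the construction. The defining condition of $\mathcal N(M,L)$ is $[M_*,\,L_*][L^{\T},\,-M^{\T}]^{\T}=0$, and $[L^{\T},\,-M^{\T}]^{\T}$ is the $4n\times 2n$ block column obtained by stacking $L$ on top of $-M$; the condition is therefore $M_*L=L_*M$, \emph{not} $M_*L^{\T}=L_*M^{\T}$ as you write. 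Since $L^{\T}=\begin{bmatrix}I_n&\mathbf 0\\ \overline F&E\end{bmatrix}\neq L$ here, the two equations are genuinely different, and solving the wrong one would produce a pair outside $\mathcal N(M,L)$, so Lemma~\ref{lemme-doubling} (symplecticity and eigenvalue squaring) would no longer apply. Moreover, the candidate pair you exhibit satisfies neither equation: e.g.\ for the correct condition one finds the $(2,2)$ blocks of $M_*L$ and $L_*M$ would have to match $-\overline E^{\,2}$ against $I_n$ after stripping the common left factor, which fails.

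The correct choice (the paper's) is
\[
M_*=\begin{bmatrix}E+E\overline{F}(I_n-F\overline{F})^{-1}F& \mathbf 0\\ \overline{E}(I_n-F\overline{F})^{-1}F& I_n\end{bmatrix},\qquad
L_*=\begin{bmatrix}I_n& E\overline{F}(I_n-F\overline{F})^{-1}\\ \mathbf 0& \overline{E}(I_n-F\overline{F})^{-1}\end{bmatrix},
\]
for which $M_*L=L_*M$ follows from the push-through identity $E+E\overline F(I-F\overline F)^{-1}F=E(I-\overline FF)^{-1}$, and the products come out already normalized, giving $\widetilde E=E(I_n-\overline FF)^{-1}E$ and $\widetilde F=F+\overline EF(I_n-\overline FF)^{-1}E$ (note the middle factor is $F$, not $\overline F$ as in your schematic formula). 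Your fallback of ``renormalizing by a common left factor'' is also shakier than you suggest: a single left factor $G$ must simultaneously turn the $(2,2)$ block of $GM_*M$ and the $(1,1)$ block of $GL_*L$ into $I_n$, which is an extra compatibility condition rather than a free normalization. Once the correct $[M_*,\,L_*]$ is in hand, your Step~3 verification of $\widetilde E^{\HH}=\widetilde E$ and $\widetilde F^{\T}=\widetilde F$ goes through as you describe.
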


\begin{proof}
Let
\[
M_*=\begin{bmatrix}E+E\overline{F}( I_n-F\overline{F} )^{-1}F& \mathbf 0\\ 
\\
\overline{E}(I_n- F\overline{F} )^{-1}F& I_n\end{bmatrix},\qquad
L_*=\begin{bmatrix}I_n& E\overline{F}(I_n- F\overline{F} )^{-1}
\\ \\ 
\mathbf 0& \overline{E}( I_n-F\overline{F} )^{-1}\end{bmatrix}.
\]
We have $\rank([M_{*},\, L_{*}])=2n$ and
\[
M_{*}L=\begin{bmatrix}E(I_n-\overline{F}F)^{-1}&E\overline{F}(I_n-F\overline{F})^{-1}\\ \\
\overline{E}(I_n-F\overline{F})^{-1}F&\overline{E}(I_n-F\overline{F})^{-1} \end{bmatrix}=L_{*}M,
\]
implying that $[M_{*},\, L_{*}] \in \mathcal{N}(M, L)$. 
Routine manipulations yield
\[
M_{*}M=\begin{bmatrix}E(I_n-\overline{F}F)^{-1}E&\mathbf 0\\ \\F+\overline{E}F(I_n-\overline{F}F)^{-1}E&I_n\end{bmatrix}, \qquad
L_{*}L=\begin{bmatrix}I_n&\overline{F}+E\overline{F}(I_n-F\overline{F})^{-1}\overline{E}
\\ \\ 
\mathbf 0&\overline{E}(I_n-F\overline{F})^{-1}\overline{E}\end{bmatrix}.
\]
With $\widetilde{E}=E(I_n-\overline{F}F)^{-1}E$ and
$\widetilde{F}=F+\overline{E}F(I_n-\overline{F}F)^{-1}E$,  the result follows.
\end{proof}

If we initially take $M_0=M_{\alpha}$ and $L_0=L_{\alpha}$ 
(from \eqref{MLalpha}), indicating that
$E_0=E_{\alpha}$ and $F_0=F_{\alpha}$ (specified in \eqref{EFalpha}), 
then successive doubling transformations in Theorem~\ref{theorem-doubling-trans1}
produce a sequence of symplectic pairs 
$(M_{k}, L_{k})$ provided that $(I_n-\overline{F}_kF_k)$ are nonsingular for $k\geq 0$.
Specifically, we have a well-defined doubling iteration, provided that $1 \not\in \sigma (F_k)$: (for $k=0, 1, \ldots$)
\begin{equation}\label{doubling iteration}
E_{k+1}=E_k(I_n-\overline{F}_kF_k)^{-1}E_k, \qquad
F_{k+1}=F_k+\overline{E}_kF_k(I_n-\overline{F}_kF_k)^{-1}E_k .
\end{equation}
Assuming \eqref{eigenH}  
with $S_{\alpha}\equiv(S-\alpha I_l)^{-1}(S+\alpha I_l)$, 
Lemmas~\ref{lemme-doubling} and~\ref{eigenML0} imply 
\begin{align}\label{MLX}
M_{k} \begin{bmatrix} X_1 \\ \\ X_2 \end{bmatrix}
=L_{k} \begin{bmatrix} X_1 \\ \\ X_2 \end{bmatrix} S_{\alpha}^{2^k}, \ \ \ 
M_{k}=\begin{bmatrix}E_{k}&\mathbf 0\\ \\F_{k}&I_n\end{bmatrix}, \ \  L_{k}=\begin{bmatrix}I_n&\overline{F}_k\\ \\
 \mathbf 0&\overline{E}_k\end{bmatrix}.
\end{align}

The DA in \eqref{doubling iteration} has two iterative formulae for $E_k$ and $F_k$. Interestingly, 
the SDAs for Riccati equations and quadratic palindromic eigenvalue problems~\cite{cfl,cflw,chlw} have three, 
those for nonsymmetric algebraic Riccati equations~\cite{lcjl1,lcjl2} have four, 
while the PDA for the linear palindromic eigenvalue problem~\cite{lccl} has one. 

\subsection*{Convergence}
We next consider the convergence of the DA. 
Without loss of generality, we assume for the moment  that $1\notin \sigma(F_k)$ 
for all $k=0, 1, \ldots$. For the case that $1\in \sigma(F_k)$ for some $k$, 
Theorem~\ref{theorem-bound} below essentially demonstrates that the following convergence result still hold. 
We also require the technical assumption that $X_1$ and 
$\left[
		X_1, \Psi_{11}
\right]$, respectively,  
are   nonsingular in Theorems~\ref{convergence} and \ref{thm:conv_pure} below. 

\begin{theorem}\label{convergence}
Assume that $H$ possesses no purely imaginary eigenvalue and \\ 
$H [X_1^{\T},\, X_2^{\T}\ ]^{\T}=[X_1^{\T},\, X_2^{\T}\ ]^{\T}S$
with  $X_1, X_2, S\in\mathbb{C}^{n\times n}$, where
$\lambda(S)$ is in the interior of the left half plane.    Then for
$\{E_k\}$  and $\{F_k\}$ generated by \eqref{doubling iteration}, we have
$\lim_{k\rightarrow\infty}E_k=0$ and $\lim_{k\rightarrow\infty}F_k=-X_2X_1^{-1}$,
both converging quadratically.
\end{theorem}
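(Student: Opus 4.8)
The plan is to exploit the structural identity \eqref{MLX}, which says that the doubling iterates satisfy $M_k [X_1^\T, X_2^\T]^\T = L_k [X_1^\T, X_2^\T]^\T S_\alpha^{2^k}$, together with a complementary relation for the stable subspace of the \emph{reciprocal} pencil. By Lemma~\ref{lemma-eigenpairs}, the columns $\Pi\overline{X}$ (with $X=[X_1^\T,X_2^\T]^\T$, i.e. $\Pi\overline{X} = [\overline{X}_2^\T, \overline{X}_1^\T]^\T$) span the deflating subspace of $H$ associated with $-\overline{S}$, whose spectrum lies in the \emph{right} half plane; applying Lemma~\ref{eigenML0} and part~(2) of Lemma~\ref{lemme-doubling} in the reverse direction gives $M_k (\Pi\overline{X}) (-\overline{S})_\alpha^{2^k} = L_k (\Pi\overline{X})$, where $(-\overline{S})_\alpha = (-\overline{S}-\alpha I)^{-1}(-\overline{S}+\alpha I)$. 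The key point is that $\rho(S_\alpha)<1$ since $\lambda(S)$ is in the open left half plane and $\alpha>0$, so writing $S_\alpha = S_\alpha$ we have $\|S_\alpha^{2^k}\| = O(r^{2^k})$ for any $r$ with $\rho(S_\alpha)<r<1$; symmetrically, $\|(-\overline{S})_\alpha^{-2^k}\| = O(r^{2^k})$ because $(-\overline{S})_\alpha$ has all eigenvalues of modulus $>1$.

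Next I would turn these two subspace relations into explicit formulas for $E_k$ and $F_k$. Reading off the block rows of \eqref{MLX}: the first block row gives $E_k X_1 = X_1 S_\alpha^{2^k} + \overline{F}_k X_2 S_\alpha^{2^k}$, and the second gives $F_k X_1 + X_2 = \overline{E}_k X_2 S_\alpha^{2^k}$. From the reciprocal relation applied to $\Pi\overline{X}$ one similarly obtains $E_k \overline{X}_2 (-\overline{S})_\alpha^{2^k} = \overline{X}_2 + \overline{F}_k \overline{X}_1$ and $F_k \overline{X}_2 (-\overline{S})_\alpha^{2^k} + \overline{X}_1 (-\overline{S})_\alpha^{2^k} = \overline{E}_k \overline{X}_1$. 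Since $X_1$ is assumed nonsingular, the second block-row equation rearranges to $F_k = -X_2 X_1^{-1} + \overline{E}_k X_2 S_\alpha^{2^k} X_1^{-1}$, so the whole convergence question reduces to showing $E_k \to 0$ quadratically; the $F_k$ limit and its quadratic rate then follow immediately, provided $\{E_k\}$ (equivalently $\{\overline{E}_k\}$) stays bounded. To control $E_k$, combine the first block row of \eqref{MLX} with the reciprocal relation: eliminating $\overline{F}_k$ between $E_k X_1 = (X_1 + \overline{F}_k X_2) S_\alpha^{2^k}$ and the expression for $\overline{F}_k$ coming from the $\Pi\overline{X}$ block, one should be able to solve for $E_k$ in closed form as $E_k = \Theta_k S_\alpha^{2^k}$ (times a factor built from $X_1, X_2$ and a matrix of the form $I + O(r^{2^k})$ that is invertible for large $k$), which gives both boundedness and the bound $\|E_k\| = O(r^{2^k})$; quadratic convergence is the statement that $r^{2^k} = (r^{2^{k}})$ shrinks like the square of $r^{2^{k-1}}$.

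The main obstacle is the bookkeeping needed to isolate $E_k$ cleanly: a priori $E_k$ and $F_k$ are coupled in \eqref{MLX}, and one must use \emph{both} the forward relation (for the stable subspace spanned by $X$) and the backward relation (for the unstable subspace spanned by $\Pi\overline{X}$), together with the nonsingularity of $X_1$ and of the relevant perturbed identity blocks, to decouple them. A secondary point requiring care is justifying that the auxiliary matrices one must invert — of the form $I_n + (\text{something})\cdot O(r^{2^k})$ — are indeed invertible for $k$ sufficiently large and have uniformly bounded inverses, which is where a Neumann-series estimate enters. Finally, one should note that the hypothesis $1\notin\sigma(F_k)$ for all $k$ (or the substitute provided by Theorem~\ref{theorem-bound}) is what guarantees the iteration \eqref{doubling iteration} is well defined, so that $E_k$, $F_k$ exist at every step and the limit statements make sense. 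Once $E_k \to 0$ at rate $O(r^{2^k})$ is established, substituting back yields $F_k = -X_2 X_1^{-1} + O(r^{2^k})$, completing the proof, with the quadratic nature of the convergence inherited directly from the exponent doubling $2^k$.
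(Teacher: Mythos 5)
Your proposal is essentially the paper's own proof: the paper merely notes that $\rho(S_{\alpha})<1$ for $\alpha>0$ and defers to \cite[Corollary~3.2]{lx06}, and what you sketch --- reading off the block rows of \eqref{MLX}, eliminating $\overline{F}_k$, inverting a matrix of the form $I_n+O(r^{2^{k+1}})$ by a Neumann series to get $E_k=O(r^{2^k})$, and then $F_k=-X_2X_1^{-1}+O(r^{2^k})$ --- is exactly that standard SDA convergence argument. One slip should be corrected: by Lemma~\ref{lemme-doubling}(2) applied to $M_{\alpha}(\Pi\overline{X})=L_{\alpha}(\Pi\overline{X})(-\overline{S})_{\alpha}$, the reciprocal relation reads $M_k(\Pi\overline{X})=L_k(\Pi\overline{X})\,[(-\overline{S})_{\alpha}]^{2^k}$, equivalently $M_k(\Pi\overline{X})\,\overline{S}_{\alpha}^{2^k}=L_k(\Pi\overline{X})$ with the \emph{small} factor $\overline{S}_{\alpha}^{2^k}$, whereas your block-row equations carry the large power $(-\overline{S})_{\alpha}^{2^k}=\overline{S}_{\alpha}^{-2^k}$ and, taken literally, the elimination would not close. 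Since $L_k$ is built from $\overline{E}_k,\overline{F}_k$, this reciprocal relation is in fact just the complex conjugate of \eqref{MLX}, so with the exponent fixed the decoupling you describe goes through verbatim.
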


\begin{proof}
Let  $S_{\alpha} \equiv (S-\alpha I_n)^{-1}(S+\alpha I_n)$.  Note that the  
spectral radius of $S_{\alpha}$ is less than $1$ when $\alpha>0$.  
The proof is similar to that of \cite[Corollary~3.2]{lx06}.  
\end{proof}

The following theorem illustrates the linear convergence of the proposed DA when 
some purely imaginary eigenvalues exist. 

Let the Jordan decompositions of $J_{2p_{r,j}}(\ii\omega_j+\alpha)[J_{2p_{r,j}}(\ii\omega_j-\alpha)]^{-1}$ be\\ 
$J_{2p_{r,j}}(\ii\omega_j+\alpha)[J_{2p_{r,j}}(\ii\omega_j-\alpha)]^{-1}=Q_{r,j}J_{2p_{r,j}}(\ee^{\ii\theta_j}) Q_{r,j}^{-1}$ 
for $r=1, \cdots, l_j$ and $j=1, \cdots, q$.  
Denote $W_{\omega}=[W_{1,\omega}^{\T}, W_{2,\omega}^{\T}]^{\T}$,  
$Z_{\omega}=[Z_{1,\omega}^{\T}, Z_{2,\omega}^{\T}]^{\T}$,  
$Q_{r,j} =\begin{bmatrix}
	Q_{r,j}^{(11)}&Q_{r,j}^{(12)}\\ \\ Q_{r,j}^{(21)}&Q_{r,j}^{(22)}
\end{bmatrix}$ and, for $s',t'=1,2$,  
\begin{align*}
	Q^{(s't')}&:= \bigoplus_{j=1}^q \bigoplus_{r=1}^{l_j} Q_{r,j}^{(s't')}
	\\
	\Psi_{11} & \equiv W_{1,\omega}Q^{(11)}+Z_{1,\omega}Q^{(21)}, 
	\ \ \ \Psi_{21} \equiv W_{2,\omega}Q^{(11)}+Z_{2,\omega}Q^{(21)}. 
\end{align*}


\begin{theorem}\label{thm:conv_pure}
Assume that the partial multiplicities of $H$ associated 
with the purely  imaginary eigenvalues are all even, and $H$ has the eigen-decomposition 
specified in \eqref{eq:eigen_decomp}. 
Writing $X=[
	X_1^{\T}, X_2^{\T}
]^{\T}$,  
provided that 
$[
	X_1, \Psi_{11}
]$ is nonsingular, 
we  then  have $\lim_{k\to \infty}E_k=0$ and 
$\lim_{k\to \infty} F_k=  [X_2, \Psi_{21}][	X_1, \Psi_{11}]^{-1}$, both converging linearly.
\end{theorem}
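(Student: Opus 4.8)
The plan is to mimic the structure of the proof of Theorem~\ref{convergence}, but now tracking the purely imaginary eigenvalues through the doubling process. First I would apply the doubling transformation repeatedly to the eigen-relation \eqref{eq:eigen_decomp}. By Lemmas~\ref{lemme-doubling} and~\ref{eigenML0}, transforming $H[X,W_\omega,\Pi\overline{X},Z_\omega]=[X,W_\omega,\Pi\overline{X},Z_\omega]\widetilde{S}$ à la Cayley gives $M_\alpha[\cdots]=L_\alpha[\cdots]\widetilde{S}_\alpha$ with $\widetilde{S}_\alpha\equiv(\widetilde{S}-\alpha I)^{-1}(\widetilde{S}+\alpha I)$, and after $k$ doubling steps, $M_k[\cdots]=L_k[\cdots]\widetilde{S}_\alpha^{2^k}$. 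The key point is to split $[X,W_\omega,\Pi\overline{X},Z_\omega]$ into the "stable-plus-borderline" part versus the "anti-stable-plus-borderline" part. Using the Jordan form $J_{2p_{r,j}}(\ii\omega_j+\alpha)[J_{2p_{r,j}}(\ii\omega_j-\alpha)]^{-1}=Q_{r,j}J_{2p_{r,j}}(\ee^{\ii\theta_j})Q_{r,j}^{-1}$, I rewrite the purely imaginary block so that the columns corresponding to the single eigenvalue $\ee^{\ii\theta_j}$ with geometric multiplicity $l_j$ are separated out; these are exactly the columns picked up by $Q^{(11)}$ and $Q^{(21)}$, producing $\Psi_{11}=W_{1,\omega}Q^{(11)}+Z_{1,\omega}Q^{(21)}$ and $\Psi_{21}=W_{2,\omega}Q^{(11)}+Z_{2,\omega}Q^{(21)}$.

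Next I would form the invariant relation for the combined "convergent" subspace spanned by $\begin{bmatrix}X_1\\X_2\end{bmatrix}$ (eigenvalues in the open left half-plane, hence $|\cdot|<1$ after Cayley) together with $\begin{bmatrix}\Psi_{11}\\\Psi_{21}\end{bmatrix}$ (the part of the purely imaginary Jordan structure on which the Cayley-transformed Jordan block acts with spectral radius $1$ but whose off-diagonal coupling with its companion block drives it to a limit). Writing $\mathcal{X}\equiv[X_1^\T,X_2^\T;\Psi_{11}^\T,\Psi_{21}^\T]^\T$ and collecting the restriction of $\widetilde{S}_\alpha$ to this subspace as $\mathcal{S}_\alpha$, we have $M_k\mathcal{X}=L_k\mathcal{X}\mathcal{S}_\alpha^{2^k}$. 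Reading off the block rows of $M_k$ and $L_k$ from \eqref{MLX} gives $E_k[X_1^\T,\Psi_{11}^\T]^\T=[X_1^\T,\Psi_{11}^\T]^\T\mathcal{S}_\alpha^{2^k}-\overline{F}_k(\text{second block})\mathcal{S}_\alpha^{2^k}$ and $F_k[X_1^\T,\Psi_{11}^\T]^\T+[X_2^\T,\Psi_{21}^\T]^\T=\overline{E}_k(\cdots)\mathcal{S}_\alpha^{2^k}$, so that (using nonsingularity of $[X_1,\Psi_{11}]$ and uniform boundedness of $E_k,F_k$, which one gets exactly as in the cited $\cite{lx06}$ argument and Theorem~\ref{theorem-bound}) the right-hand sides are controlled by $\|\mathcal{S}_\alpha^{2^k}\|$. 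The crucial estimate is that $\mathcal{S}_\alpha^{2^k}\to 0$: the left-half-plane part contracts quadratically, while each Jordan block $J_{p_{r,j}}(\ee^{\ii\theta_j})^{2^k}$ — a unit-modulus eigenvalue on a nilpotent-perturbed block — has norm growing only polynomially in $2^k$, but it is multiplied against $\overline{E}_k$ (resp. $\overline{F}_k$ coupling) from the companion block; since $E_k\to 0$ quadratically one shows the product still tends to zero, leaving a linear rate overall. Hence $E_k\to 0$ and $F_k\to-[X_2^\T,\Psi_{21}^\T]^\T[X_1^\T,\Psi_{11}^\T]^{-\T}=[X_2,\Psi_{21}][X_1,\Psi_{11}]^{-1}$ (up to the sign bookkeeping forced by the SSF-1 convention, which I would reconcile carefully).

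The main obstacle, as usual for doubling on purely imaginary spectra, is the borderline block: one must show that the $2^k$-th power of the Cayley-transformed double Jordan block $J_{2p_{r,j}}(\ee^{\ii\theta_j})=\begin{bmatrix}J_{p_{r,j}}(\ee^{\ii\theta_j})&e_{p_{r,j}}e_1^\T\\0&J_{p_{r,j}}(\ee^{\ii\theta_j})\end{bmatrix}$, when paired with the matrices $E_k,F_k$ in the block formulae, does not spoil convergence. The even-partial-multiplicity assumption is exactly what makes this work: the $(1,1)$ block $J_{p_{r,j}}(\ee^{\ii\theta_j})$ (spectral radius $1$, polynomial growth) never appears alone — it is always hit by a factor $E_k$ or by the $(1,2)$ coupling block which feeds into the convergent part — so the polynomial growth $\mathcal{O}((2^k)^{p_{r,j}-1})$ is dominated by the quadratic decay of $E_k$. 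Quantifying this — i.e., bounding $\|E_k\|$, $\|F_k\|$ uniformly and then estimating $\|E_k\cdot J_{p_{r,j}}(\ee^{\ii\theta_j})^{2^k}\|$ — is the technical heart; I expect to invoke the two technical lemmas in the Appendix for the precise growth/decay bookkeeping and to handle the coupling term $\Omega_\omega$ appearing in $\widetilde{S}$, which is what ties the limit to $\Psi_{21}$ rather than to $W_{2,\omega}$ alone.
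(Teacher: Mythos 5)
Your setup matches the paper's: you form the doubled relation \eqref{eq:doubling_pure}, conjugate by the block-diagonal similarity built from the $Q_{r,j}$ so that the purely imaginary part appears as $\diag(J_{\omega,\theta},J_{\omega,\theta})$ with coupling $\Omega_{\omega,\theta,k}$, and identify $\Psi_{11},\Psi_{21}$ as the columns selected by $Q^{(11)},Q^{(21)}$. The paper then closes the argument by invoking the proof of \cite[Theorem~4.2]{hl}. Your proposed way of closing it, however, contains a genuine gap: the ``crucial estimate'' $\mathcal{S}_\alpha^{2^k}\to 0$ is false. The restriction of $\widetilde S_\alpha$ to the invariant subspace spanned by $[X_1^{\T},X_2^{\T}]^{\T}$ and $[\Psi_{11}^{\T},\Psi_{21}^{\T}]^{\T}$ is $\diag(S_\alpha, J_{\omega,\theta})$, and $J_{\omega,\theta}^{2^k}$ has unimodular eigenvalues with norm growing polynomially in $2^k$; it does not tend to zero. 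Consequently the block-row identities $E_kU_1=(U_1+\overline F_kU_2)\mathcal{S}_\alpha^{2^k}$ and $F_kU_1+U_2=\overline E_kU_2\mathcal{S}_\alpha^{2^k}$ (with $U_1=[X_1,\Psi_{11}]$, $U_2=[X_2,\Psi_{21}]$) do not by themselves yield convergence. Your fallback --- that the polynomial growth of $J_{p_{r,j}}(\ee^{\ii\theta_j})^{2^k}$ is absorbed because ``$E_k\to 0$ quadratically'' --- is circular ($E_k\to0$ is the conclusion being proved) and also false in this regime: when purely imaginary eigenvalues are present the convergence is only linear, which is precisely the content of the theorem.

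The mechanism that actually works, and the reason the paper reduces to \cite[Theorem~4.2]{hl}, is a \emph{relative-growth} argument on the full $2n$-column relation: one must keep the companion half of each Jordan chain ($\Pi\overline X$ and the $Z$-columns) and exploit that the off-diagonal block $\Omega_{\omega,\theta,k}=J_{2p_{r,j}}^{2^k}(\ee^{\ii\theta_j})(1{:}p_{r,j},\,p_{r,j}{+}1{:}2p_{r,j})$ grows strictly faster than the diagonal blocks $J_{p_{r,j}}^{2^k}(\ee^{\ii\theta_j})$ (their entries are binomial coefficients $\binom{2^k}{\cdot}$ of higher order), so that $J_{\omega,\theta}^{2^k}\Omega_{\omega,\theta,k}^{-1}=\bigO(2^{-k})$. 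Normalizing the purely imaginary columns by $\Omega_{\omega,\theta,k}^{-1}$ is what kills the polynomially growing terms and produces the linear rate; the even-partial-multiplicity hypothesis is exactly what makes $\Omega_{\omega,\theta,k}$ square and eventually invertible. Without this step your argument does not go through. (Two smaller misattributions: the appendix lemmas \ref{lemma-tanh} and \ref{lemma-distance} serve the double-Cayley analysis of Theorem~\ref{theorem-bound}, not this convergence proof, and Theorem~\ref{theorem-bound} does not supply uniform bounds on $E_k,F_k$.)
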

\begin{proof}
By \eqref{eq:eigen_decomp} and Lemmas~\ref{lemme-doubling} and~\ref{eigenML0}, we have 
\begin{align}\label{eq:doubling_pure}
	M_{k}
	\begin{bmatrix}
		X_1&W_{1,\omega}& \overline{X_2} & Z_{1,\omega}\\
		X_2&W_{2,\omega}& \overline{X_1}&Z_{2,\omega}
	\end{bmatrix}
	=
	L_k
	\begin{bmatrix}
		X_1&W_{1,\omega}& \overline{X_2} & Z_{1,\omega}\\
		X_2&W_{2,\omega}& \overline{X_1}&Z_{2,\omega}
	\end{bmatrix}
	\widetilde S_{\alpha}^{2^k},
\end{align}
where 
	$\widetilde S_{\alpha}
	=
(\widetilde{S} + \alpha I)(\widetilde{S} - \alpha I)^{-1}$ with $\widetilde{S}$ from  \eqref{eq:eigen_decomp}. Let  $\Pi_{\omega}$ be the permutation matrix satisfying 
\begin{align*}
	&
	\Pi_{\omega}\diag\left\{S, -\overline{S}; \bigoplus_{j=1}^q \bigoplus_{r=1}^{l_j} J_{2p_{r,j}}(\ii \omega_j)\right\} 
	\Pi_{\omega}^{\T}
	=
	\widetilde{S}, 
\end{align*}
and denote $\mathcal{D} \equiv \diag\left\{ I_{n_1},I_{n_1}; \bigoplus_{j=1}^q \bigoplus_{r=1}^{l_j}Q_{r,j}\right\}$, 
$J_{\omega,\theta}= \bigoplus_{j=1}^q \bigoplus_{r=1}^{l_j} J_{p_{r,j}}(\ee^{\ii \theta_j})$, 
and $S_{\alpha}:=(S+\alpha I)(S-\alpha I)^{-1}$, 
it holds  that     
\begin{align*}
	\widetilde {S}_{\alpha}
	=
	\begin{multlined}[t]
		\Big(\Pi_{\omega}
			\mathcal{D}
			\Pi_{\omega}^{\T}
		\Big)
		\begin{bmatrix}
			S_{\alpha}&&&\\&J_{\omega,\theta}&&\Omega_{\omega}\\
			& &\overline{S}_{\alpha}^{-1}&\\
			&&&J_{\omega,\theta}
		\end{bmatrix}
		\Big(\Pi_{\omega}
			\mathcal{D}^{-1} 
			\Pi_{\omega}^{\T}
		\Big). 
	\end{multlined}
\end{align*}
This further  implies  
\begin{align}\label{eq:wtdS2^k}
	\widetilde {S}_{\alpha}^{2^k}
	=
	\begin{multlined}[t]
		\Big(\Pi_{\omega}
			\mathcal{D}
			\Pi_{\omega}^{\T}
		\Big)
		\begin{bmatrix}
			S_{\alpha}^{2^k}&&&\\&J_{\omega,\theta}^{2^k}&&\Omega_{\omega,\theta,k}\\
			& &\overline{S}_{\alpha}^{-2^k}&\\
			&&&J_{\omega,\theta}^{2^k}
		\end{bmatrix}
		\Big(\Pi_{\omega}
			\mathcal{D}^{-1} 
			\Pi_{\omega}^{\T}
		\Big)
	\end{multlined}
\end{align}
with $\Omega_{\omega, \theta, k}=\bigoplus_{j=1}^q \bigoplus_{r=1}^{l_j}
J_{2p_{r,j}}^{2^k}(\ee^{\ii\theta_j})(1:p_{r,j}, p_{r,j}+1:2p_{r,j})$.
By \eqref{eq:doubling_pure} and \eqref{eq:wtdS2^k} we have 
\begin{align*}
	M_{k}
	\begin{bmatrix}
		X_1&W_{1,\omega}& \overline{X_2} & Z_{1,\omega}\\
		X_2&W_{2,\omega}& \overline{X_1}&Z_{2,\omega}
	\end{bmatrix}
	\left(\Pi_{\omega} \mathcal{D} \Pi_{\omega}^{\T}\right)
	=
	\begin{multlined}[t]
		L_k
		\begin{bmatrix}
			X_1&W_{1,\omega}& \overline{X_2} & Z_{1,\omega}\\
			X_2&W_{2,\omega}& \overline{X_1}&Z_{2,\omega}
		\end{bmatrix}\left(\Pi_{\omega} \mathcal{D} \Pi_{\omega}^{\T}\right)
		\\
	\cdot\begin{bmatrix}
			S_{\alpha}^{2^k}&&&\\&J_{\omega,\theta}^{2^k}&&\Omega_{\omega,\theta,k}\\
			& &\overline{S}_{\alpha}^{-2^k}&\\
			&&&J_{\omega,\theta}^{2^k}
		\end{bmatrix}.
	\end{multlined}
\end{align*}
Similar to the proof of \cite[Theorem~4.2]{hl}, we obtain the result.  
\end{proof}

\bigskip

Next assume that we have acquired a sympletic pair $(M_{k},\, L_{k})$ with 
$\|E_k\|_F<\mathbf{u}$, where $\mathbf{u}$ is some small tolerance. The question 
is then how to compute the eigenvalues and eigenvectors of $H$ 
from $E_k$ and $F_k$.  Without loss of generality, we just show  
the details for the case that no purely imaginary eigenvalues exist. 

Denote the error $Z_k \equiv F_k+X_2X_1^{-1}$ (Theorem~\ref{convergence} and \eqref{doubling iteration} suggest  
$\|Z_k\|_F< \mathbf{u}$), where $X_1, X_2\in\mathbb{C}^{n\times n}$ satisfy
$H \left[ X_1^{\T}, \, X_2^{\T} \right]^{\T}=\left[ X_1^{\T}, \, X_2^{\T} \right]^{\T} S$ with
$\lambda(S)\subseteq\mathbb{C}_{-}$, we have
\begin{align}\label{approx-eigenvalue}
H \begin{bmatrix} \ \ I_n \\ \\ -F_k \end{bmatrix} = \begin{bmatrix} \ \ I_n \\ \\ -F_k \end{bmatrix} X_1SX_1^{-1} 
+ \begin{bmatrix}\mathbf  0 \\ \\ Z_k \end{bmatrix} X_1SX_1^{-1} -H \begin{bmatrix}\mathbf  0 \\ \\ Z_k \end{bmatrix}.
\end{align}
Pre-  and post-multiplying 
$\left[ I_n, \, -F_k^{\HH} \right]$ 
and $(I_n+F_k^{\HH}F_k)^{-1}$, respectively,  to both sides of
\eqref{approx-eigenvalue}, we obtain
\begin{align*}
&(I_n+F_k^{\HH}F_k)X_1SX_1^{-1}(I_n+F_k^{\HH}F_k)^{-1}\\
= &  \left\{ \left[ I_n, \, -F_k^{\HH} \right]  H 
\left[ I_n, \, -F_k^{\T} \right]^{\T} 
+ (F_k^{\HH}Z_kX_1SX_1^{-1}+BZ_k+F_k^{\HH}\overline{A}Z_k) \right\} 
(I_n+F_k^{\HH}F_k)^{-1}.
\end{align*}
Accordingly, we can take  the eigenvalues of  
$H_k\equiv \left[ I_n, \, -F_k^{\HH} \right] H 
\left[ I_n, \, -F_k^{\T} \right]^{\T}(I_n+F_k^{\HH}F_k)^{-1}$ to
approximate $\lambda(S)$ (the stable subspectrum of $H$). 
By the generalized Bauer-Fike theorem~\cite{ss90}, 
when the eigenvalues $\lambda_p(S)$ have 
Jordan blocks of maximum size $m$, 
there exists an eigenvalue $\lambda_q(H_k)$ such that 
\begin{align*}
	\frac{|\lambda_p(S)-\lambda_q(H_k)|^m}{[1+|\lambda_p(S)-\lambda_q(H_k)|]^{m-1}}
	&\leq
	\Upsilon\|(F_k^{\HH}Z_kX_1SX_1^{-1}+BZ_k+F_k^{\HH}\overline{A}Z_k)
(I_n+F_k^{\HH}F_k)^{-1}\|_2\\
     &\leq
\Upsilon\|F_k^{\HH}Z_kX_1SX_1^{-1}+BZ_k+F_k^{\HH}\overline{A}Z_k\|_2,
\end{align*}
for some $\Upsilon > 0$ associated with $S$.  
Consequently,  we can approximate $\lambda(S)$ 
 by $\lambda(H_k)$.

\subsection{Double-Cayley Transform}

When $1\in\sigma(F_{k_0})$ for some $k_0>1$ 
(or the condition in Theorem~\ref{theorem-doubling-trans1} is violated), 
we cannot construct the new symplectic pair  $(M_{k_0+1},\, L_{k_0+1})$ via the doubling transformation 
in \eqref{doubling iteration}. 
In this section, we divert the DA from this potential interruption using a DCT.
We shall also prove the efficiency of the technique, not requiring  a restart with a new $\alpha$.  
It is worthwhile to point that  the DCT may be applied  
when  $I - \overline{F}_{k_0} F_{k_0}$ is ill-conditioned.  
In practice, we may set a tolerance $\mathbf{u}$ and 
once the singular values  of $F_{k_0}$  
satisfy $\frac{\min_{\sigma\in \sigma(F_{k_0})}|\sigma-1|}{\max_{\sigma\in \sigma(F_{k_0})}|\sigma-1|}<\mathbf{u}$,   
the DCT  is then applied. 

We require the following results firstly.

\begin{lemma}\label{lemma-E}
Assume that the  doubling iteration  \eqref{doubling iteration} does not break off 
for all $k<k_0$.  
If $E_0$ is nonsingular, so are  $E_{k}$ $(0<k \leq k_0)$. 
\end{lemma}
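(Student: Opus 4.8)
The plan is to prove the claim by induction on $k$, exploiting the first doubling formula in \eqref{doubling iteration}, namely $E_{k+1} = E_k(I_n - \overline{F}_k F_k)^{-1} E_k$. The base case $k = 0$ is the hypothesis. For the inductive step, suppose $E_k$ is nonsingular for some $k$ with $0 \le k < k_0$; I want to conclude $E_{k+1}$ is nonsingular. Since the iteration does not break off at step $k$ (because $k < k_0$), the matrix $I_n - \overline{F}_k F_k$ is nonsingular, hence $(I_n - \overline{F}_k F_k)^{-1}$ is well-defined and invertible. Then $E_{k+1}$ is a product of three invertible matrices $E_k$, $(I_n - \overline{F}_k F_k)^{-1}$, $E_k$, so it is invertible. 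This closes the induction and gives nonsingularity of $E_k$ for all $0 < k \le k_0$.

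The only subtlety worth spelling out is exactly what "does not break off for all $k < k_0$" means and why it licenses the invertibility of $I_n - \overline{F}_k F_k$ at each such step: by the construction in Theorem~\ref{theorem-doubling-trans1} and the definition of the doubling iteration \eqref{doubling iteration}, a step is well-defined precisely when $1 \notin \sigma(F_k)$, equivalently when $I_n - \overline{F}_k F_k$ (and $I_n - F_k\overline{F}_k$) is nonsingular; this is what "not breaking off" encodes, so the factor $(I_n - \overline{F}_k F_k)^{-1}$ appearing in the formula for $E_{k+1}$ genuinely exists for every $k < k_0$. I do not anticipate any real obstacle here — the statement is essentially immediate from the multiplicative structure of the $E$-recursion — so the proof is short.

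\begin{proof}
We argue by induction on $k$. The case $k=0$ is the assumption that $E_0$ is nonsingular. Suppose now that $E_k$ is nonsingular for some $k$ with $0\le k<k_0$. Since the doubling iteration \eqref{doubling iteration} does not break off at step $k$ (as $k<k_0$), we have $1\notin\sigma(F_k)$, so that $I_n-\overline{F}_kF_k$ is nonsingular and $(I_n-\overline{F}_kF_k)^{-1}$ is well-defined and invertible. By the first formula in \eqref{doubling iteration},
\[
E_{k+1}=E_k(I_n-\overline{F}_kF_k)^{-1}E_k
\]
is a product of three nonsingular matrices, hence nonsingular. By induction, $E_k$ is nonsingular for all $0<k\le k_0$.
\end{proof}
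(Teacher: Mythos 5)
Your proof is correct and follows exactly the same route as the paper, which simply observes that $E_{k+1}=E_k(I_n-\overline{F}_kF_k)^{-1}E_k$ exhibits $E_{k+1}$ as a product of nonsingular matrices; you have merely written out the induction explicitly. No issues.
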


\begin{proof}
This directly follows from  
$E_{k+1}=E_k(I_n-\overline{F}_kF_k)^{-1}E_k$ in \eqref{doubling iteration}. 
\end{proof}

Obviously, Lemma~\ref{lemma-E}  suggests that $M_{k_0}$ and $L_{k_0}$, defined in \eqref{MLX},
 are both nonsingular and so is
\[
L_{k_0}^{-1}M_{k_0}=\begin{bmatrix}E_{k_0}-\overline{F}_{k_0}\overline{E}_{k_0}^{-1}F_{k_0}&-\overline{F}_{k_0}\overline{E}_{k_0}^{-1}\\
&\\
\overline{E}_{k_0}^{-1}F_{k_0}&\overline{E}_{k_0}^{-1}\end{bmatrix}.
\]
Since $L_{k_0}^{-1}M_{k_0} [X_1^{\T},\, X_2^{\T}]^{\T}=
 [X_1^{\T},\, X_2^{\T}]^{\T}S_{\alpha}^{2^{k_0}}$,
the fact that  $\{0, \alpha\} \not\subset\lambda(H)$  implies 
$L_{k_0}^{-1}M_{k_0} \pm I_{2n}$  
are nonsingular. 
Consequently, we have the following  theorem.

\begin{theorem}\label{theorem-doubling-trans2}
Let $\vartheta\in \{-1, 1\}$ and  $\beta \in \mathbb{R}$. Provided that  $\vartheta \notin \lambda(E_{k_0})$, 
then   
\begin{enumerate}
  \item [{\em (a)}] $Z=\vartheta I_{n}-E_{k_0}+\vartheta \overline{F}_{k_0}(\vartheta \overline{E}_{k_0}-I_n)^{-1}F_{k_0}$ 
	  is nonsingular;  
  \item [{\em (b)}] $(\widehat{H}+\beta \vartheta I_{2n})  [X_1^{\T},\, X_2^{\T}]^{\T}
                    =(\widehat{H}-\beta \vartheta I_{2n})  [X_1^{\T},\, X_2^{\T}]^{\T} 
                    (\vartheta S_{\alpha}^{2^{k_0}})$  
                    with  
                    $\widehat{A}=\beta \vartheta I_{n}-2\beta Z^{-1}$, 
                    $\widehat{B}=(\beta I_n-\vartheta\widehat{A})\overline{F}_{k_0}(\overline{E}_{k_0}-\vartheta I_n)^{-1}$
                    and $\widehat{H}=\begin{bmatrix}\ \ \widehat{A}&\ \ \widehat{B}\\ \\-\overline{\widehat{B}}&-\overline{\widehat{A}}\end{bmatrix}$; and
  \item [{\em (c)}] $\widehat{A}$ is Hermitian and $\widehat{B}$ is symmetric.
\end{enumerate}
\end{theorem}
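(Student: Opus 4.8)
The plan is to reverse-engineer the construction of $\widehat H$ from $(M_{k_0}, L_{k_0})$ by undoing the Cayley transform that produced $(M_\alpha, L_\alpha)$ from $H$, and then redoing a second Cayley transform, which explains the name ``double-Cayley''. Concretely, I would start from $L_{k_0}^{-1}M_{k_0}$, whose explicit block form is displayed just before the theorem, and observe that it plays the role that $S_\alpha$-type resolvent matrices played in Lemma~\ref{eigenML0}: it satisfies $L_{k_0}^{-1}M_{k_0}[X_1^\T,X_2^\T]^\T = [X_1^\T,X_2^\T]^\T S_\alpha^{2^{k_0}}$. The matrix $\widehat H$ is then \emph{defined} so that $(\widehat H + \beta\vartheta I)(\widehat H - \beta\vartheta I)^{-1}$ equals $\vartheta\,(L_{k_0}^{-1}M_{k_0})$; part~(b)'s eigen-relation is then immediate from Lemma~\ref{eigenML0} applied in reverse, once one checks $\widehat H$ is well-defined, i.e.\ that the relevant inverse exists. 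So the three parts of the theorem are: (a) the inverse needed to write down $\widehat A$ exists; (b) with this $\widehat A$ and the stated $\widehat B$, the block matrix $\widehat H$ indeed satisfies the Cayley identity with $\vartheta(L_{k_0}^{-1}M_{k_0})$, hence the eigen-relation; (c) the Hermitian/symmetric structure is preserved.

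For part~(a): one way is to write $Z = \vartheta I_n - E_{k_0} + \vartheta\overline F_{k_0}(\vartheta\overline E_{k_0}-I_n)^{-1}F_{k_0}$ and recognize it (up to sign/scaling) as the $(1,1)$ block of $\vartheta I_{2n} - L_{k_0}^{-1}M_{k_0}$ after a Schur-complement manipulation using the $(2,2)$ block $\overline E_{k_0}^{-1}$; more cleanly, $\vartheta\notin\lambda(E_{k_0})$ ensures $\vartheta\overline E_{k_0} - I_n$ is invertible (since $E_{k_0}$ is Hermitian, $\overline E_{k_0}$ shares its real spectrum; actually $\vartheta = \pm1$ and $\lambda(E_{k_0})\cap\{\vartheta\}=\emptyset$ gives this), and then $Z$ is, via the block-$LU$ of $\vartheta I_{2n}-L_{k_0}^{-1}M_{k_0}$, the Schur complement of its nonsingular $(2,2)$ block. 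Since $\vartheta(L_{k_0}^{-1}M_{k_0}) - I_{2n}$ is nonsingular (this was established right before the theorem, from $\{0,\alpha\}\not\subset\lambda(H)$ and $\vartheta\in\{-1,1\}$, because the eigenvalues of $\vartheta S_\alpha^{2^{k_0}}$ avoid $1$), its Schur complement $Z$ is nonsingular. That gives~(a), and simultaneously makes $\widehat A = \beta\vartheta I_n - 2\beta Z^{-1}$ well-defined.

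For part~(b): I would verify directly that $\widehat H := \begin{bmatrix}\widehat A & \widehat B\\ -\overline{\widehat B} & -\overline{\widehat A}\end{bmatrix}$ satisfies $(\widehat H + \beta\vartheta I_{2n}) = (\widehat H - \beta\vartheta I_{2n})\cdot \vartheta(L_{k_0}^{-1}M_{k_0})$, equivalently $\widehat H(I_{2n} - \vartheta L_{k_0}^{-1}M_{k_0}) = -\beta\vartheta(I_{2n} + \vartheta L_{k_0}^{-1}M_{k_0})$. This is a block identity one checks block-by-block: the $(1,1)$ and $(1,2)$ blocks of the left side involve $\widehat A Z$ and $\widehat A$ times the off-diagonal blocks of $L_{k_0}^{-1}M_{k_0}$, and plugging in $\widehat A = \beta\vartheta I_n - 2\beta Z^{-1}$ collapses $\widehat A Z = \beta\vartheta Z - 2\beta I_n$ as needed; the off-diagonal $\widehat B$ is precisely whatever makes the $(1,2)$ block match, which is the stated formula $\widehat B = (\beta I_n - \vartheta\widehat A)\overline F_{k_0}(\overline E_{k_0}-\vartheta I_n)^{-1}$; and the bottom row $(-\overline{\widehat B}, -\overline{\widehat A})$ matching the remaining blocks follows from conjugating and using the block structure of $L_{k_0}^{-1}M_{k_0}$ together with Lemma~\ref{lemma-H-Hermitian}-type symmetry (the $(2,2)$ block of $L_{k_0}^{-1}M_{k_0}$ is $\overline E_{k_0}^{-1}$, the conjugate of its role in $M_{k_0}$). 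Once the Cayley identity holds, applying it to $[X_1^\T,X_2^\T]^\T$ and using $\vartheta(L_{k_0}^{-1}M_{k_0})[X_1^\T,X_2^\T]^\T = [X_1^\T,X_2^\T]^\T(\vartheta S_\alpha^{2^{k_0}})$ gives exactly the claimed relation.

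For part~(c): $E_{k_0}$ is Hermitian and $F_{k_0}$ is complex symmetric (preserved by the doubling iteration, Theorem~\ref{theorem-doubling-trans1}), so $\overline E_{k_0}$ is Hermitian with real spectrum and $(\vartheta\overline E_{k_0}-I_n)$ is Hermitian; then $Z = \vartheta I_n - E_{k_0} + \vartheta\overline F_{k_0}(\vartheta\overline E_{k_0}-I_n)^{-1}F_{k_0}$ is Hermitian because $\overline F_{k_0} = F_{k_0}^\HH$ (from $F_{k_0}^\T = F_{k_0}$) makes the last term of the form $F_{k_0}^\HH(\text{Hermitian})^{-1}F_{k_0}$; hence $Z^{-1}$ and $\widehat A = \beta\vartheta I_n - 2\beta Z^{-1}$ are Hermitian. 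For $\widehat B$, I would check $\widehat B^\T = \widehat B$: writing $\widehat B = (\beta I_n - \vartheta\widehat A)\overline F_{k_0}(\overline E_{k_0}-\vartheta I_n)^{-1}$ and using that $\beta I_n - \vartheta\widehat A$ is Hermitian, $\overline F_{k_0} = F_{k_0}^\HH$ is the conjugate of a symmetric matrix, and $(\overline E_{k_0}-\vartheta I_n)^{-1}$ is Hermitian, a short computation of $\widehat B^\T$ reproduces $\widehat B$ — this is the same style of manipulation used in the proof of Theorem~\ref{theorem-SSF-1} to show $F_\alpha^\T = F_\alpha$, so it is routine.

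The main obstacle I anticipate is bookkeeping in part~(b): matching all four blocks of the Cayley identity simultaneously, keeping the conjugations straight (the $\overline{\phantom{x}}$ on $E,F$ versus on $\widehat A,\widehat B$), and confirming that the \emph{single} choice of $\widehat B$ forced by the $(1,2)$ block automatically makes the $(2,1)$ and $(2,2)$ blocks consistent — that consistency is really a manifestation of $\widehat H$ inheriting the BSE structure, so it should work out, but it requires care rather than cleverness. The conceptual content (a reversed Cayley transform of the doubled pencil) is clean; the labor is in the block algebra.
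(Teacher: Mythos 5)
Your proposal is correct and follows essentially the same route as the paper: part (a) via the Schur complement / block elimination of $\vartheta I_{2n}-L_{k_0}^{-1}M_{k_0}$ combined with the nonsingularity of $L_{k_0}^{-1}M_{k_0}\pm I_{2n}$ established just before the theorem; part (b) by identifying $\widehat{H}$ with the inverse Cayley transform $\beta\vartheta(L_{k_0}^{-1}M_{k_0}-\vartheta I)^{-1}(L_{k_0}^{-1}M_{k_0}+\vartheta I)$ and applying it to $[X_1^{\T},X_2^{\T}]^{\T}$; and part (c) by the Hermitian structure of $Z$ and a push-the-inverse-through manipulation for $\widehat{B}^{\T}=\widehat{B}$, exactly as in the paper's proof of Theorem~\ref{theorem-SSF-1}. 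The block-consistency of the bottom row in (b) that you flag as the main bookkeeping risk does indeed follow from the structural identity $\Pi\overline{L_{k_0}^{-1}M_{k_0}}\,\Pi=(L_{k_0}^{-1}M_{k_0})^{-1}$, so $\widehat{H}\Pi=-\Pi\overline{\widehat{H}}$ holds automatically, as you anticipated.
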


\begin{proof}
For (a) with $\vartheta\notin\lambda(E_{k_0})$, $E_{k_0}-\vartheta I_n$ is nonsingular and so is 
\[
K\triangleq\begin{bmatrix}I_n&\overline{F}_{k_0}(I_n-\vartheta \overline{E}_{k_0})^{-1}\\
&\\
\mathbf 0&(\overline{E}_{k_0}^{-1}-\vartheta I_n)^{-1}\end{bmatrix}. 
\]
In addition, pre-multiplying $L_{k_0}^{-1}M_{k_0}$ by $K$ gives
\[
K(L_{k_0}^{-1}M_{k_0}-\vartheta I_{2n})
=\begin{bmatrix}E_{k_0}-\vartheta I_n+\vartheta \overline{F}_{k_0}(I_n-\vartheta \overline{E}_{k_0})^{-1}F_{k_0} & \mathbf 0\\
&\\
(I_n-\vartheta \overline{E}_{k_0})^{-1}F_{k_0}&I_n\end{bmatrix},
\]
implying that $Z=\vartheta I_{n}-E_{k_0}+\vartheta \overline{F}_{k_0}(\vartheta \overline{E}_{k_0}-I_n)^{-1}F_{k_0}$ 
is nonsingular.

For (b), manipulations show that
$\widehat{H}=\beta\vartheta(L_{k_0}^{-1}M_{k_0}-\vartheta I_n)^{-1}(L_{k_0}^{-1}M_{k_0}+\vartheta I_n)$. Then 
$M_{k_0}\ [X_1^{\T},\, X_2^{\T}]^{\T}
=L_{k_0} [X_1^{\T},\, X_2^{\T}]^{\T}S_{\alpha}^{2^{k_0}}$ 
implies  
\[
(L_{k_0}^{-1}M_{k_0}-\vartheta I_n)^{-1}(L_{k_0}^{-1}M_{k_0}+\vartheta I_n) [X_1^{\T},\, X_2^{\T}]^{\T}
= [X_1^{\T},\, X_2^{\T}]^{\T}(S_{\alpha}^{2^{k_0}}-\vartheta I_n)^{-1}
(S_{\alpha}^{2^{k_0}}+\vartheta I_n),
\]
leading to 
$\widehat{H} [X_1^{\T},\, X_2^{\T}]^{\T}= [X_1^{\T},\, X_2^{\T}]^{\T}
[\beta \vartheta (S_{\alpha}^{2^{k_0}}-\vartheta I_n)^{-1}(S_{\alpha}^{2^{k_0}}+\vartheta I_n)]$. 
Consequently, the result follows from the resulting equalities
\begin{align*}
& (\widehat{H}+\beta \vartheta I_n) [X_1^{\T},\, X_2^{\T}]^{\T}
= [X_1^{\T},\, X_2^{\T}]^{\T}[2\beta \vartheta
(S_{\alpha}^{2^{k_0}}-\vartheta I_n)^{-1}S_{\alpha}^{2^{k_0}}], \\
& (\widehat{H}-\beta \vartheta I_n) [X_1^{\T},\, X_2^{\T}]^{\T}
= [X_1^{\T},\, X_2^{\T}]^{\T}[2\beta (S_{\alpha}^{2^{k_0}}-\vartheta I_n)^{-1}].
\end{align*}

For (c), $\widehat{A}^{\HH}=\widehat{A}$ directly follows from its definition and the facts that
$E_{k_0}^{\HH}=E_{k_0}$ and $F_{k_0}^{\T}=F_{k_0}$.  For the symmetry of $\widehat{B}$, observe that 
\begin{align*}
\widehat{B}
&=2\beta \vartheta Z^{-1}\overline{F}_{k_0}(\overline{E}_{k_0}-\vartheta I_n)^{-1}&\\
&=2\beta \vartheta (E_{k_0}- \vartheta I_n)^{-1}[I_n+ \vartheta \overline{F}_{k_0}
(\vartheta \overline{E}_{k_0}-I_n)^{-1}F_{k_0}(\vartheta I_n-E_{k_0})^{-1}]^{-1}
\overline{F}_{k_0}(\vartheta I_n-\overline{E}_{k_0})^{-1}&\\
&=2\beta \vartheta (E_{k_0}- \vartheta I_n)^{-1} \overline{F}_{k_0}
[I_n+\vartheta (\vartheta \overline{E}_{k_0}-I_n)^{-1}F_{k_0}(\vartheta I_n-E_{k_0})^{-1}\overline{F}_{k_0}]^{-1}
(\vartheta I_n-\overline{E}_{k_0})^{-1}&\\
&=2\beta \vartheta (E_{k_0}-\vartheta I_n)^{-1} \overline{F}_{k_0}\overline{Z}^{-1}
=2\beta \vartheta (E_{k_0}-\vartheta I_n)^{-1}\overline{F}_{k_0} Z^{-\T}=B^{\T}. 
\end{align*} 
The proof is complete. 
\end{proof}

Theorem~\ref{theorem-doubling-trans2} implies 
$\widehat{H} [X_1^{\T},\, X_2^{\T}]^{\T}
=\beta \vartheta  [X_1^{\T},\, X_2^{\T}]^{\T}
(S_{\alpha}^{2^{k_0}}+\vartheta I_l)(S_{\alpha}^{2^{k_0}}-\vartheta I_l)^{-1}$, hence
each  eigenvalue  $\lambda$
of $H$ corresponds to an eigenvalue $\mu$ of $\widehat{H}$:
\begin{align}\label{eigenvalue-mu}
\mu=f(\lambda)\triangleq
\beta\vartheta \cdot \frac{(\lambda+\alpha)^{2^{k_0}}+\vartheta(\lambda-\alpha)^{2^{k_0}}}
{(\lambda+\alpha)^{2^{k_0}}-\vartheta(\lambda-\alpha)^{2^{k_0}}}.
\end{align}
More specifically, for $\lambda\in\lambda(H)$, we have
\begin{align*}
\left\{
\begin{array}{ll}
\{\mu, \ \overline{\mu}=f(\overline{\lambda}),\  -\mu=f(-\lambda),\  -\overline{\mu}=f(-\overline{\lambda})\}\subseteq \lambda(\widehat{H}), & \quad \text{if} \ \Re(\lambda)\Im(\lambda)\neq0;\\
\{\mu,  \  -\mu=f(-\lambda) \}\subseteq \lambda(\widehat{H}), & \quad \text{if} \ \Im(\lambda)=0;\\
\{\mu,  \ \overline{\mu}=f(\overline{\lambda}) \}\subseteq \lambda(\widehat{H}), & \quad \text{if} \ \Re(\lambda)=0.\\
\end{array}
\right.
\end{align*}
In addition, $\mu \in \lambda(\widehat{H})$ is purely imaginary if $\lambda \in \lambda(H)$ is so. 
Equivalently, there exists no purely imaginary eigenvalues for $\widehat{H}$ 
 when there is none for $H$.  

Next select $\gamma\in\mathbb{R}$ with
$\gamma I_n-\widehat{A}$ and 
$I_n-(\gamma I_n-\overline{\widehat{A}})^{-1}\overline{\widehat{B}} (\gamma I_n-\widehat{A})^{-1}\widehat{B}$
being nonsingular. Theorem~\ref{theorem-SSF-1} could then be applied to $\widehat{A}$ and $\widehat{B}$, which are  defined
in Theorem~\ref{theorem-doubling-trans2}, to obtain a new SSF-1 derived from  $\widehat{H}$.
Thus, we have
\begin{multline*}
M_{k_0+1}\begin{bmatrix}X_1\\ \\ X_2\end{bmatrix}
= L_{k_0+1}\begin{bmatrix}X_1\\ \\ X_2\end{bmatrix}
	\left[ \beta\vartheta (S_{\alpha}^{2^{k_0}}+\vartheta I_l)
	(S_{\alpha}^{2^{k_0}}-\vartheta I_l)^{-1} +\gamma I_l \right]
	\\
	\cdot 
	\left[ \beta\vartheta (S_{\alpha}^{2^{k_0}}+\vartheta I_l)
	(S_{\alpha}^{2^{k_0}}-\vartheta I_l)^{-1} -\gamma I_l \right]^{-1}, 
\end{multline*}
with
\begin{align*}
&M_{k_0+1}=\begin{bmatrix}E_{{k_0}+1}&\mathbf 0\\ \\ F_{k_0+1}&I_n\end{bmatrix}, \qquad
L_{k_0+1}=\begin{bmatrix}I_n&\overline{F}_{k_0+1}\\ \\ \mathbf 0&\overline{E}_{k_0+1}\end{bmatrix},\\
	&E_{k_0+1}=I_n-2\gamma\left[(\gamma I_n-\widehat{A})
-\widehat{B}(\gamma I_n-\overline{\widehat{A}})^{-1}\overline{\widehat{B}}\right]^{-1},\\
&F_{k_0+1}=-2\gamma(\gamma I_n-\overline{\widehat{A}})^{-1}\overline{\widehat{B}}
\left[(\gamma I_n-\widehat{A})-\widehat{B} (\gamma I_n-\overline{\widehat{A}})^{-1}\overline{\widehat{B}}\right]^{-1}.
\end{align*}
We call the above transform from $(M_{k_0}, L_{k_0})$ to 
$(M_{k_0+1}, L_{k_0+1})$, both symplectic, a DCT. 
Accordingly, with $\delta_{\lambda}\triangleq (\lambda+\alpha)(\lambda-\alpha)^{-1}$, $|\delta_{\lambda}|<1$ 
and $\varpi \triangleq (\beta-\vartheta\gamma)(\beta\vartheta +\gamma)^{-1}$, 
an eigenvalue $\mu$ of $\widehat{H}$ (in \eqref{eigenvalue-mu}) would be transformed into 
an eigenvalue $\nu$ of $(M_{k_0+1}, L_{k_0+1})$ via the following formula: (for $\lambda \in\lambda(H)$)
\begin{align*}
	\nu &\equiv\nu(\mu)=\frac{\mu+\gamma}{\mu-\gamma}\\
	&=
	\frac{\beta\vartheta[(\lambda+\alpha)^{2^{k_0}}+\vartheta(\lambda-\alpha)^{2^{k_0}}] 
	+ \gamma[(\lambda+\alpha)^{2^{k_0}}-\vartheta(\lambda-\alpha)^{2^{k_0}}]}
	{\beta\vartheta[(\lambda+\alpha)^{2^{k_0}}+\vartheta(\lambda-\alpha)^{2^{k_0}}] 
	- \gamma[(\lambda+\alpha)^{2^{k_0}}-\vartheta(\lambda-\alpha)^{2^{k_0}}]} 
	=\vartheta \cdot \frac{\varpi +\delta_{\lambda}^{2^{k_0}}}{1+\varpi \delta_{\lambda}^{2^{k_0}}}. 
\end{align*}

One may consider the condition number of $I_n-\overline{F}_{k_0+1}F_{k_0+1}$, or equivalently, 
the difference  between $1$ and $\sigma(F_{k_0+1})$. Obviously, 
$\sigma(F_{k_0})$ depends on $\gamma$. 
Without loss of generality we assume $\vartheta=1$, then with 
$\gamma=\beta(\kappa^{2^{k_0}}+1)(\kappa^{2^{k_0}}-1)^{-1}$(with $\kappa$ to be specified), we have    
\begin{align*}
	&F_{k_0+1}=
	-\frac{\kappa^{2^{k_0}}+1}{\kappa^{2^{k_0}}-1}
	\left(\frac{\overline Z}{\kappa^{2^{k_0}}-1} +I_n \right)^{-1} F_{k_0}(E_{k_0}-I_n)^{-1}
	\\
	& \ \ \ \cdot \left[\left(\frac{Z}{\kappa^{2^{k_0}}-1} +I_n \right) 
	-  \overline F_{k_0}(\overline E_{k_0}-I_n)^{-1}
\left(\frac{\overline Z}{\kappa^{2^{k_0}}-1} +I_n \right)^{-1} F_{k_0}(E_{k_0}-I_n)^{-1}\right]^{-1}Z.
\end{align*} 
Thus we can choose some $\kappa$ to make 
$I_n-\overline{F}_{k_0+1}F_{k_0+1}$  well conditioned.  
We leave the issue of an optimal  $\kappa$ or $\gamma$ for the future,   
while making random choices in our numerical experiments.    
Theorem~\ref{theorem-bound} and Corollary~\ref{corollary-bound}  below illustrate that 
$\kappa$ characterizes the convergence rate and does not have to be large.

With $\gamma>0$ and   
$\Re(\mu)<0$, we have $|\nu(\mu)|<1$. The following 
lemma reveals  more.  

\begin{lemma}\label{lemma-nu}
Provided that $\vartheta\beta, \gamma>0$, then each $\nu$ corresponding to a non-purely imaginary
eigenvalue $\lambda\in \lambda(H)$ with $\Re(\lambda)<0$ satisfies $|\nu|<1$.
\end{lemma}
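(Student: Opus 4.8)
The plan is to reduce the bound to the elementary fact that a Blaschke factor $w \mapsto (\varpi + w)/(1 + \varpi w)$ with a \emph{real} parameter $\varpi$ of modulus less than one maps the open unit disk into itself; the actual work is then only to certify that both $\varpi$ and $w \triangleq \delta_{\lambda}^{2^{k_0}}$ lie in that disk. Since $|\vartheta| = 1$, from the closed form $\nu = \vartheta(\varpi + \delta_{\lambda}^{2^{k_0}})/(1 + \varpi\delta_{\lambda}^{2^{k_0}})$ obtained just before the lemma, it suffices to establish $|\varpi + w| < |1 + \varpi w|$. We already have $|\delta_{\lambda}| < 1$ (indeed $|\lambda+\alpha|^2 - |\lambda-\alpha|^2 = 4\alpha\,\Re(\lambda) < 0$ because $\alpha > 0$ and $\Re(\lambda) < 0$), hence $|w| < 1$.

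Next I would verify that $\varpi \in \mathbb{R}$ with $|\varpi| < 1$, treating the two values $\vartheta \in \{-1,1\}$ uniformly. Writing $b \triangleq \vartheta\beta$, the hypothesis $\vartheta\beta > 0$ gives $b > 0$; using $\vartheta^2 = 1$ one has $\beta\vartheta + \gamma = b + \gamma > 0$ and $\beta - \vartheta\gamma = \vartheta(b - \gamma)$, so that $\varpi = \vartheta(b-\gamma)/(b+\gamma)$ is real with $|\varpi| = |b-\gamma|/(b+\gamma) < 1$ since $b, \gamma > 0$. In particular $|\varpi w| < 1$, so $1 + \varpi w \neq 0$ and $\nu$ is well defined.

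Finally, using that $\varpi$ is real, I would expand
\[
|1 + \varpi w|^2 - |\varpi + w|^2 = (1 + \varpi w)(1 + \varpi\overline{w}) - (\varpi + w)(\varpi + \overline{w}) = (1 - \varpi^2)(1 - |w|^2),
\]
the cross terms $\varpi(w + \overline{w})$ cancelling. The right-hand side is strictly positive since $\varpi^2 < 1$ and $|w| < 1$, whence $|\nu| = |\varpi + w|/|1 + \varpi w| < 1$, as claimed. The only point requiring a little care is the sign bookkeeping in the bound $|\varpi| < 1$ valid for both signs of $\vartheta$; everything else is a single line of algebra, so I do not anticipate a real obstacle here.
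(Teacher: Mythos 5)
Your proof is correct, and all the supporting facts you invoke check out: $|\delta_\lambda|<1$ follows from $|\lambda+\alpha|^2-|\lambda-\alpha|^2=4\alpha\Re(\lambda)<0$ with the paper's standing assumption $\alpha>0$; the reality of $\varpi$ and the bound $|\varpi|=|b-\gamma|/(b+\gamma)<1$ with $b=\vartheta\beta>0$ are exactly what the paper also uses; and the identity $|1+\varpi w|^2-|\varpi+w|^2=(1-\varpi^2)(1-|w|^2)$ is the standard Blaschke-factor computation (valid precisely because $\varpi\in\mathbb{R}$). Your route is genuinely shorter than the paper's. The paper writes $|\nu|^2$ as the ratio in its equation \eqref{xi}, argues monotonicity in $\xi=\Re(\delta_\lambda^{2^{k_0}})$ for fixed $|\delta_\lambda|$, and evaluates at the extreme $\xi=\pm|\delta_\lambda|^{2^{k_0}}$ to obtain the sharper intermediate estimate
\[
|\nu|\;\le\;\frac{|\delta_\lambda|^{2^{k_0}}+|\varpi|}{1+|\varpi|\,|\delta_\lambda|^{2^{k_0}}}\;<\;1,
\]
which quantifies how far $\nu$ sits inside the unit disk; your argument delivers only the qualitative conclusion $|\nu|<1$, which is all the lemma claims. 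It is worth noting that the positivity of ``denominator minus numerator'' in the paper's \eqref{xi} is exactly your identity $(1-\varpi^2)(1-|\delta_\lambda|^{2^{k_0+1}})>0$, so the two proofs share the same algebraic core; you simply stop there instead of extracting the extremal bound. Since the sharper estimate is not cited elsewhere (the later quantitative analysis is carried by Theorem 3.11 and Corollary 3.12 instead), your one-line version is a perfectly adequate, and cleaner, proof of the lemma as stated.
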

\begin{proof}
Let $\xi+\ii\eta=\varrho=\delta_{\lambda}^{2^{k_0}}$, 
we then have $|\varrho|=|\delta_{\lambda}|^{2^{k_0}}$ and $|\xi|\leq |\delta_{\lambda}|^{2^{k_0}}$. 
Consequently, from the definition of $\nu$ we deduce that 
\begin{align}
|\nu|^2&=\frac{(\xi^2+\eta^2)(\beta\vartheta+\gamma)^2+(\beta-\vartheta\gamma)^2+2\vartheta\xi(\beta^2-\gamma^2)}
{(\beta\vartheta+\gamma)^2+(\beta-\vartheta\gamma)^2(\xi^2+\eta^2)+2\vartheta\xi(\beta^2-\gamma^2)} \nonumber \\
&=
\frac{|\delta_{\lambda}|^{2^{k_0+1}}+2\xi\varpi 
+\varpi^2}
{|\delta_{\lambda}|^{2^{k_0+1}}\varpi^2
+2\xi \varpi +1}. \label{xi}
\end{align}
Since $\vartheta\beta, \gamma>0$ and  the function defined in \eqref{xi}
is (i) monotone nondecreasing with respect to $\xi$  when $\beta>\vartheta\gamma$ 
or (ii) monotone non-increasing otherwise, we obtain 
\[
|\nu|^2\leq\left\{
\begin{array}{ll}
&\frac{|\delta_{\lambda}|^{2^{k_0}}(|\delta_{\lambda}|^{2^{k_0}}+2\varpi )
+\varpi^2}
{|\delta_{\lambda}|^{2^{{k_0}}}(2\varpi+|\delta_{\lambda}|^{2^{k_0}}
\varpi^2 )+1}, \quad \quad \text{if} \quad \beta>\vartheta\gamma;\\
&\\
&\frac{|\delta_{\lambda}|^{2^{k_0}}(|\delta_{\lambda}|^{2^{k_0}}-2\varpi )
+\varpi^2}
{|\delta_{\lambda}|^{2^{{k_0}}}(-2\varpi+|\delta_{\lambda}|^{2^{k_0}}
\varpi^2 )+1}, \quad \ \text{if} \quad \beta<\vartheta\gamma;\\
\end{array}
\right.
\]
which is equivalent to
\[
|\nu|^2\leq
\frac{|\delta_{\lambda}|^{2^{k_0}}(|\delta_{\lambda}|^{2^{k_0}}+2|\varpi|)+
\varpi^2}
{|\delta_{\lambda}|^{2^{{k_0}}}(2|\varpi|+
|\delta_{\lambda}|^{2^{k_0}}\varpi^2 )+1}
= \left( \frac{|\delta_{\lambda}|^{2^{k_0}}+|\varpi|}
{|\delta_{\lambda}|^{2^{k_0}}|\varpi| +1} \right)^2.
\]
Obviously, 
$(|\delta_{\lambda}|^{2^{k_0}}+|\varpi|)
(|\delta_{\lambda}|^{2^{k_0}}|\varpi| + 1)^{-1} <1$ 
from 
$|\varpi| = |\beta-\vartheta\gamma|/(\vartheta\beta+\gamma)<1$ and $|\delta_{\lambda}|<1$, thus the result follows.
\end{proof}

Lemma~\ref{lemma-nu} demonstrates  that for  $\lambda\in\lambda(H)$ satisfying $\Im(\lambda)\neq0$,
the DCT maps half of these $\lambda$ to some values inside of the unit circle and 
the other half outside. Next we consider 
the detailed relationship between $\nu$ and $\varrho=\delta_\lambda^{2^{k_0}}$,
which is vital for the convergence of the DA coupled with   
the DCT.

Obviously, when  $\vartheta\beta, \gamma>0$, we have $|\varpi|<1$.
Taking  $\gamma=\beta(\kappa^{2^{k_0}}+\vartheta)(\vartheta\kappa^{2^{k_0}}-1)^{-1} >0$ with  $\kappa>1$, 
we obtain  
$\varpi 
=-\kappa^{-2^{k_0}}$ and 
\[
\nu=\vartheta \cdot \frac{\delta_\lambda^{2^{k_0-1}}- \kappa^{-2^{k_0-1}}}
{1-\delta_\lambda^{2^{k_0-1}} \kappa^{-2^{k_0-1}}} \cdot 
\frac{\delta_\lambda^{2^{k_0-1}}+ \kappa^{-2^{k_0-1}}}
{1+\delta_\lambda^{2^{k_0-1}} \kappa^{-2^{k_0-1}}}.
\]
Denote  $\xi+\ii\eta=\delta_\lambda^{2^{k_0-1}}$ and define
\begin{eqnarray*}
\phi &=& \arctanh \delta_\lambda^{2^{k_0-1}}\\
&=& \frac{1}{2}
\ln\left|\frac{(\lambda-\alpha)^{2^{k_0-1}}+(\lambda+\alpha)^{2^{k_0-1}}}{(\lambda-\alpha)^{2^{k_0-1}}-(\lambda+\alpha)^{2^{k_0-1}}}\right|
+\frac{\ii}{2}\arg\left[\frac{(\lambda-\alpha)^{2^{k_0-1}}+(\lambda+\alpha)^{2^{k_0-1}}}
{(\lambda-\alpha)^{2^{k_0-1}}-(\lambda+\alpha)^{2^{k_0-1}}}\right],\\
\psi&=&\arctanh \kappa^{-2^{k_0-1}} 
=\frac{1}{2}\left[ \ln(1+\sqrt{|\varpi|})
-\ln(1-\sqrt{|\varpi|})\right]. 
\end{eqnarray*}
We deduce  that 
\[
\arg\left[\frac{(\lambda-\alpha)^{2^{k_0-1}}+(\lambda+\alpha)^{2^{k_0-1}}}{(\lambda-\alpha)^{2^{k_0-1}}-(\lambda+\alpha)^{2^{k_0-1}}}\right]
=\arctan \frac{2\eta}{1-\xi^2-\eta^2}\in\left(-\frac{\pi}{2}, \ \frac{\pi}{2}\right).
\]
Specifically, $\arg\left[\dfrac{(\lambda-\alpha)^{2^{k_0-1}}+(\lambda+\alpha)^{2^{k_0-1}}}{(\lambda-\alpha)^{2^{k_0-1}}-(\lambda+\alpha)^{2^{k_0-1}}}\right]=0$ when $\lambda\in\mathbb{R}$.
Moreover, by the definitions of $\phi$ and $\psi$, routine manipulations show that
\[
\nu=\vartheta\tanh(\phi-\psi)\tanh(\phi+\psi)
\]
with
\[ 
\phi \pm \psi=\frac{1}{2}\ln\left[\frac{\sqrt{\gamma+\vartheta\beta} \pm \sqrt{\vartheta\gamma-\beta}}
{\sqrt{\gamma+\vartheta\beta} \mp \sqrt{\vartheta\gamma-\beta}}
\sqrt{\frac{(1+\xi)^2+\eta^2}{(1-\xi)^2+\eta^2}}\right]
+\frac{\ii}{2}\arctan \frac{2\eta}{1-\xi^2-\eta^2}.
\]
Under the assumptions in Lemma~\ref{lemma-nu}, the following theorem  gives  a 
sharp bound for those $|\nu|$ corresponding to $\lambda$ which satisfies 
$\Im(\lambda)\neq0$ and $|\delta_\lambda| < 1$.

\begin{theorem}\label{theorem-bound}
Assume that  $\lambda$ is not a purely imaginary eigenvalue of $H$, 
$\vartheta\beta>0$ and $\kappa\geq2$.   
Then we have 
$|\nu| \leq \max\left\{|\delta_\lambda|^{2^{k_0-2}}, \
\kappa^{-2^{k_0-2}}\right\}$.
\end{theorem}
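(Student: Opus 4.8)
The plan is to strip off the modulus and reduce the whole claim to a single elementary scalar inequality. The first step is to observe that, with the choice $\gamma=\beta(\kappa^{2^{k_0}}+\vartheta)(\vartheta\kappa^{2^{k_0}}-1)^{-1}$ fixed above, one has $\varpi=-\kappa^{-2^{k_0}}$, and the standing hypotheses $\vartheta\beta>0$, $\kappa\ge 2$ force $\vartheta\beta,\gamma>0$ and $|\varpi|=\kappa^{-2^{k_0}}<1$; hence we are exactly in the situation of Lemma~\ref{lemma-nu}, and (as throughout this subsection) we restrict to $|\delta_\lambda|<1$, i.e.\ $\Re(\lambda)<0$. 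I would then invoke the estimate already produced inside the proof of Lemma~\ref{lemma-nu} --- equivalently, the fact that the disk automorphism $z\mapsto(z+\varpi)/(1+\varpi z)$ attains its maximum modulus over $\{|z|\le|\delta_\lambda|^{2^{k_0}}\}$ at the real point $z=-|\delta_\lambda|^{2^{k_0}}$ --- to get
\[
|\nu|\ \le\ \frac{|\delta_\lambda|^{2^{k_0}}+\kappa^{-2^{k_0}}}{1+\kappa^{-2^{k_0}}\,|\delta_\lambda|^{2^{k_0}}}.
\]

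Next I would substitute $X\equiv|\delta_\lambda|^{2^{k_0-2}}\in[0,1)$ and $Y\equiv\kappa^{-2^{k_0-2}}$; because $k_0\ge 2$ and $\kappa\ge 2$ this gives $Y\le\kappa^{-1}\le\tfrac12$, and the bound above becomes $|\nu|\le(X^4+Y^4)/(1+X^4Y^4)$. So it remains to prove the scalar inequality
\[
\frac{X^4+Y^4}{1+X^4Y^4}\ \le\ \max\{X,Y\}\qquad(0\le X<1,\ 0<Y\le\tfrac12).
\]
Both sides being symmetric in $X$ and $Y$, I would assume $X\ge Y$, so $\max\{X,Y\}=X$, and clear denominators to reach the equivalent form $X(1-X^3)\ge Y^4(1-X^5)$. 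For $X\le\tfrac12$ one has $Y^4\le X^4$, so the right side is at most $X^4$, while $X(1-X^3)\ge\tfrac78X\ge X^4$ since $X^3\le\tfrac18$. For $X>\tfrac12$ one has $Y^4\le\tfrac1{16}$, and after cancelling the factor $1-X>0$ the claim reduces to $16X(1+X+X^2)\ge 1+X+X^2+X^3+X^4$, which is clear from $16X>8$ together with $8(1+X+X^2)\ge 1+X+X^2+X^3+X^4$ on $[0,1]$; the endpoint $X=1$ is trivial. Combining the two cases finishes the scalar inequality, and hence the theorem.

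The part that requires the most attention is obtaining the \emph{strengthened} denominator $1+\kappa^{-2^{k_0}}|\delta_\lambda|^{2^{k_0}}$ in the first displayed bound: a plain triangle-inequality estimate would leave only $1-\kappa^{-2^{k_0}}|\delta_\lambda|^{2^{k_0}}$ there, which is too weak to dominate $\max\{X,Y\}$ when $X$ is near $1$ --- so the reduction really has to go through Lemma~\ref{lemma-nu} (or the maximum-modulus argument for the disk automorphism). Apart from that, the work is bookkeeping: checking that the $\gamma$--$\kappa$ relation places us inside the hypotheses of Lemma~\ref{lemma-nu}, and carrying out the $X\lessgtr\tfrac12$ split, where the hypothesis $\kappa\ge 2$ enters precisely as $Y=\kappa^{-2^{k_0-2}}\le\tfrac12$.
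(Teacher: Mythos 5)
Your proof is correct, and it takes a genuinely different and substantially shorter route than the paper's. You bypass the hyperbolic-tangent factorization $\nu=\vartheta\tanh(\phi-\psi)\tanh(\phi+\psi)$ entirely: instead of the sign-of-$\xi$ case split driven by Lemma~\ref{lemma-tanh}, the sub-cases (i)/(ii), and the quadratic estimates of Lemma~\ref{lemma-distance}, you start from the sharp M\"obius bound
\[
|\nu|\ \le\ \frac{|\delta_\lambda|^{2^{k_0}}+|\varpi|}{1+|\varpi|\,|\delta_\lambda|^{2^{k_0}}},
\]
which is exactly the final display in the proof of Lemma~\ref{lemma-nu} (and which you correctly re-justify as the maximum modulus of the disk automorphism over $|z|\le|\delta_\lambda|^{2^{k_0}}$; your remark that a plain triangle-inequality denominator $1-|\varpi|\,|\delta_\lambda|^{2^{k_0}}$ would be fatally weak near $|\delta_\lambda|=1$ is on point). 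With $\varpi=-\kappa^{-2^{k_0}}$ the theorem then reduces to the scalar inequality $(X^4+Y^4)/(1+X^4Y^4)\le\max\{X,Y\}$ on $0\le X<1$, $0<Y\le\tfrac12$, and your two-case verification of the equivalent form $X(1-X^3)\ge Y^4(1-X^5)$ checks out: the WLOG symmetrization is harmless because $X>\tfrac12$ forces $X=\max\{X,Y\}$ (since $Y\le\tfrac12$ always), while the case $\max\{X,Y\}\le\tfrac12$ only uses $Y^4\le X^4$. What the paper's longer argument buys is sharper intermediate bounds in some regimes (e.g.\ $|\nu|<|\delta_\lambda|^{2^{k_0-1}}$ in its case (i)) and $\tanh$ machinery reused in Corollary~\ref{corollary-bound}; what yours buys is a uniform, appendix-free proof of the stated bound. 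Two points you should state explicitly rather than leave implicit: the theorem is to be read, as the paper's preamble indicates, for $\Re(\lambda)<0$, i.e.\ $|\delta_\lambda|<1$ (for $\Re(\lambda)>0$ the reciprocal symmetry of the M\"obius map makes the stated bound false in general), and $k_0\ge 2$ is needed so that $Y=\kappa^{-2^{k_0-2}}\le\tfrac12$; both are consistent with the standing assumptions of the subsection.
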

\begin{proof}
With $\gamma=\beta\frac{\kappa^{2^{k_0}}+\vartheta}{\vartheta\kappa^{2^{k_0}}-1}$ and $\cos\left(\arctan \frac{2\eta}{1-\xi^2-\eta^2}\right)>0$, we have 
\begin{align*}
\left\{\begin{array}{l}
\ln\left(\frac{\sqrt{\gamma+\vartheta\beta}+\sqrt{\vartheta\gamma-\beta}}{\sqrt{\gamma+\vartheta\beta}-\sqrt{\vartheta\gamma-\beta}}
\sqrt{\frac{(1+\xi)^2+\eta^2}{(1-\xi)^2+\eta^2}}\right)\geq0, \qquad \text{if} \quad
\frac{(1+\xi)^2+\eta^2}{(1-\xi)^2+\eta^2}\geq1;\\
\\
\ln\left(\frac{\sqrt{\gamma+\vartheta\beta}-\sqrt{\vartheta\gamma-\beta}}{\sqrt{\gamma+\vartheta\beta}+\sqrt{\vartheta\gamma-\beta}}
\sqrt{\frac{(1+\xi)^2+\eta^2}{(1-\xi)^2+\eta^2}}\right)<0,  \qquad \text{otherwise}.
\end{array}
\right.
\end{align*}
From Lemma~\ref{lemma-tanh} and  
$[(1+\xi)^2+\eta^2][(1-\xi)^2+\eta^2]^{-1}\geq1 \Leftrightarrow \xi\geq 0$, we obtain 
\[
|\nu|<
\left\{\begin{array}{l}
|\tanh(\phi-\psi)|, \qquad \text{if} \quad \xi>0;\\
|\tanh(\phi+\psi)|, \qquad \text{if} \quad \xi<0.
\end{array}
\right.
\]

Now assume that $\xi>0$ and we consider two distinct cases. 

(i) When
\[
\sqrt{\frac{(1-\xi)^2+\eta^2}{(1+\xi)^2+\eta^2}}\leq
\frac{\sqrt{\gamma+\vartheta\beta}-\sqrt{\vartheta\gamma-\beta}}{\sqrt{\gamma+\vartheta\beta}+\sqrt{\vartheta\gamma-\beta}}
\sqrt{\frac{(1+\xi)^2+\eta^2}{(1-\xi)^2+\eta^2}}<1
\]
or
\[
\frac{\sqrt{\gamma+\vartheta\beta}-\sqrt{\vartheta\gamma-\beta}}{\sqrt{\gamma+\vartheta\beta}+\sqrt{\vartheta\gamma-\beta}}
\sqrt{\frac{(1+\xi)^2+\eta^2}{(1-\xi)^2+\eta^2}}\geq1,
\]
we have 
\[
\ln\left[\sqrt{\frac{(1-\xi)^2+\eta^2}{(1+\xi)^2+\eta^2}}\right]\leq\ln\left[\frac{\sqrt{\gamma+\vartheta\beta}-\sqrt{\vartheta\gamma-\beta}}
{\sqrt{\gamma+\vartheta\beta}+\sqrt{\vartheta\gamma-\beta}}
\sqrt{\frac{(1+\xi)^2+\eta^2}{(1-\xi)^2+\eta^2}}\right]<0
\]
or
\[
0<\ln\left[\frac{\sqrt{\gamma+\vartheta\beta}-\sqrt{\vartheta\gamma-\beta}}
{\sqrt{\gamma+\vartheta\beta}+\sqrt{\vartheta\gamma-\beta}}
\sqrt{\frac{(1+\xi)^2+\eta^2}{(1-\xi)^2+\eta^2}}\right]<
\ln\left[\sqrt{\frac{(1+\xi)^2+\eta^2}{(1-\xi)^2+\eta^2}}\right].
\]
Hence by (c) and (b) in Lemma~\ref{lemma-tanh}, it is apparent that 
\begin{align*}
|\nu|^2& < |\tanh(\phi-\psi)|^2\\
&\leq 
\left| \tanh\left\{ 
\frac{1}{2}\ln\left[\sqrt{\frac{(1+\xi)^2+\eta^2}{(1-\xi)^2+\eta^2}}\right]+\frac{\ii}{2}\arctan \frac{2\eta}{1-\xi^2-\eta^2} 
\right\} \right|^2\\
&=|\tanh(\phi)|^2 = \left| \delta_\lambda \right|^{2^{k_0}},
\end{align*}
implying that $|\nu|<\left| \delta_\lambda \right|^{2^{k_0-1}}$.

(ii) When
\[
\frac{\sqrt{\gamma+\vartheta\beta}-\sqrt{\vartheta\gamma-\beta}}{\sqrt{\gamma+\vartheta\beta}+\sqrt{\vartheta\gamma-\beta}}
\sqrt{\frac{(1+\xi)^2+\eta^2}{(1-\xi)^2+\eta^2}}<\sqrt{\frac{(1-\xi)^2+\eta^2}{(1+\xi)^2+\eta^2}} <1,
\]
we define $\widehat{\xi}+\ii\widehat{\eta}=\delta_\lambda^{2^{k_0-2}}$
and without loss of generality assume that $\widehat{\xi}>0$, which satisfies $\widehat{\xi}>|\widehat{\eta}|$ for $0<\xi=\widehat{\xi}^2-\widehat{\eta}^2$.
Similar to (i),  we obtain
\[
|\nu|<|\tanh(\phi-\psi)|=
|\tanh(\widehat{\phi}-\widehat{\psi})\tanh(\widehat{\phi}+\widehat{\psi})|
<|\tanh(\widehat{\phi}-\widehat{\psi})|,
\]
 where
$\widehat{\phi}=\arctanh \delta_\lambda^{2^{k_0-2}}$ and
$\widehat{\psi}=\arctanh \kappa^{-2^{k_0-2}}$.
Since $\xi=\widehat{\xi}^2-\widehat{\eta}^2>0$ and $|\widehat{\xi}|^2+|\widehat{\eta}|^2=\left| \delta_\lambda\right|^{2^{k_0-1}}$,
we have $\widehat{\xi}^2>\frac{1}{2}|\delta_{\lambda}|^{2^{k_0-1}}$,
leading to 
\begin{eqnarray*}
|\nu|^2&<&|\tanh(\widehat{\phi}-\widehat{\psi})|^2\\
&=&
\dfrac{ \dfrac{\kappa^{2^{k_0-2}}-1}{\kappa^{2^{k_0-2}}+1} \cdot 
\dfrac{1+|\delta_{\lambda}|^{2^{k_0-1}}+2\widehat{\xi}}{1-|\delta_{\lambda}|^{2^{k_0-1}}}
+ \dfrac{\kappa^{2^{k_0-2}}+1}{\kappa^{2^{k_0-2}}-1} \cdot
\dfrac{1+|\delta_{\lambda}|^{2^{k_0-1}}-2\widehat{\xi}}{1-|\delta_{\lambda}|^{2^{k_0-1}}}-2 }
{ \dfrac{\kappa^{2^{k_0-2}}-1}{\kappa^{2^{k_0-2}}+1} \cdot
\dfrac{1+|\delta_{\lambda}|^{2^{k_0-1}}+2\widehat{\xi}}{1-|\delta_{\lambda}|^{2^{k_0-1}}}
+ \dfrac{\kappa^{2^{k_0-2}}+1}{\kappa^{2^{k_0-2}}-1} \cdot
\dfrac{1+|\delta_{\lambda}|^{2^{k_0-1}}-2\widehat{\xi}}{1-|\delta_{\lambda}|^{2^{k_0-1}}}+2}.
\end{eqnarray*}
Since $|\tanh(\widehat{\phi}-\widehat{\psi})|^2$ is  monotonically  nonincreasing with respect to $\widehat{\xi}$, taking $\widehat{\xi}=\frac{1}{\sqrt{2}}|\delta_{\lambda}|^{2^{k_0-2}}$ in the above formula yields 
\begin{align}
|\nu|^2&<|\tanh(\widehat{\phi}-\widehat{\psi})|^2 
<\dfrac{1+|\delta_{\lambda}|^{2^{k_0-1}}\kappa^{2^{k_0-1}}-\sqrt{2}\kappa^{2^{k_0-2}}|\delta_{\lambda}|^{2^{k_0-2}}}
{\kappa^{2^{k_0-1}}+|\delta_{\lambda}|^{2^{k_0-1}}-\sqrt{2}\kappa^{2^{k_0-2}}|\delta_{\lambda}|^{2^{k_0-2}}} \notag\\
&=\kappa^{-2^{k_0-1}} \cdot \left[ \dfrac{ (2^{-1/2}\kappa^{2^{k_0-2}}|\delta_{\lambda}|^{2^{k_0-2}}-1)^2+ 
2^{-1} \kappa^{2^{k_0-1}}|\delta_{\lambda}|^{2^{k_0-1}} }
{(2^{-1/2}\kappa^{-2^{k_0-2}}|\delta_{\lambda}|^{2^{k_0-2}}-1)^2+ 
2^{-1} \kappa^{-2^{k_0-1}}|\delta_{\lambda}|^{2^{k_0-1}} } \right] \label{kappa}\\
&=|\delta_{\lambda}|^{2^{k_0-1}} \cdot \left[ \dfrac{(\kappa^{2^{k_0-2}}- 2^{-1/2} |\delta_{\lambda}|^{-2^{k_0-2}})^2 
+ 2^{-1} |\delta_{\lambda}|^{-2^{k_0-1}}}
{(\kappa^{2^{k_0-2}}- 2^{-1/2} |\delta_{\lambda}|^{2^{k_0-2}} )^2  + 2^{-1} |\delta_{\lambda}|^{2^{k_0-1}}} \right]. \label{delta}
\end{align}

Obviously for $\kappa\geq2$, we obtain $( \kappa^{-1} |\delta_{\lambda}| )^{2^{k_0-2}}<1/2$.
Hence, by Lemma~\ref{lemma-distance}, when either 
\begin{description}
\item (a) $2^{-1/2} \left(|\delta_{\lambda}|\kappa\right)^{2^{k_0-2}}\leq\frac{1}{2}$, i.e., 
$\left(|\delta_{\lambda}|\kappa\right)^{2^{k_0-2}} \leq 1/\sqrt{2}$; or
\item (b) $\frac{1}{2}< 2^{-1/2} \left(|\delta_{\lambda}|\kappa\right)^{2^{k_0-2}}
\leq 1- 2^{-1/2} |\delta_{\lambda}|^{2^{k_0-2}} \kappa^{-2^{k_0-2}}$, i.e.,  
\[
\left(|\delta_{\lambda}|\kappa\right)^{2^{k_0-2}}\geq 1/\sqrt{2}, \qquad 
|\delta_{\lambda}|^{2^{k_0-2}}(\kappa^{2^{k_0-2}}+\kappa^{-2^{k_0-2}})\leq\sqrt{2}, 
\]
\end{description}
the quantity in the square brackets in \eqref{kappa} would be no greater than $1$. This indicates that  
$|\nu|^2 \leq \kappa^{-2^{k_0-1}}$ or 
$|\nu|<\kappa^{-2^{k_0-2}}$. 

When
\[
\left(|\delta_{\lambda}|\kappa\right)^{2^{k_0-2}}\geq 1/\sqrt{2}, \qquad 
|\delta_{\lambda}|^{2^{k_0-2}} (\kappa^{2^{k_0-2}}+\kappa^{-2^{k_0-2}})>\sqrt{2},
\]
which imply $|\delta_{\lambda}|^{2^{k_0-2}}>\sqrt{2}/(\kappa^{2^{k_0-2}}+ \kappa^{-2^{k_0-2}})$,  we obtain  
\begin{align}\label{delta2}
|\delta_{\lambda}|^{2^{k_0-2}}+|\delta_{\lambda}|^{-2^{k_0-2}}<\frac{\sqrt{2}\kappa^{2^{k_0-2}}}{\kappa^{2^{k_0-1}}+1}+
\frac{\kappa^{2^{k_0-1}}+1}{\sqrt{2}\kappa^{2^{k_0-2}}}<\sqrt{2}\kappa^{2^{k_0-2}},
\end{align}
where the first ``$<$'' follows from the fact that  the function 
$f(x)=x + x^{-1}$ is  monotonically decreasing when $x<1$. 
Thus, the  assumption $\kappa\geq2$ and \eqref{delta2} together affirm that
$2^{-1/2}|\delta_{\lambda}|^{2^{k_0-2}} < 2^{-1} \kappa^{2^{k_0-2}}$ and
$2^{-1/2} |\delta_{\lambda}|^{-2^{k_0-2}}\leq\kappa^{2^{k_0-2}}- 2^{-1/2} |\delta_{\lambda}|^{2^{k_0-2}}$.
Again using Lemma~\ref{lemma-distance}, we know that the quantity in the square brackets in \eqref{delta}  
is no  greater than $1$, suggesting that the value of the right-hand-side of  
\eqref{delta} will be no greater than $|\delta_{\lambda}|^{2^{k_0-1}}$,
or equivalently  $|\nu|<|\delta_{\lambda}|^{2^{k_0-2}}$.

Consequently, the result holds for the case when $\xi>0$.
The  $\xi<0$ case can be proved similarly and we omit the details.
\end{proof}

For a real $\lambda\in\lambda(H)$, we can obtain a better result, 
with the power $2^{k_0-2}$ replaced by $2^{k_0}$ in the following corollary.  

\begin{corollary}\label{corollary-bound}
Let $\kappa>1$ and $\vartheta\beta, \alpha>0$,
then for $\lambda<0$ $(\lambda \in \lambda(H))$, we have
$|\nu|\leq \max \left\{|\delta_\lambda|^{2^{k_0}}, \
\kappa^{-2^{k_0}}\right\}$.
\end{corollary}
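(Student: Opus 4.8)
The plan is to specialize the formulae derived just before Theorem~\ref{theorem-bound} to the real case and exploit the fact that a real $\lambda$ forces $\delta_\lambda \in \mathbb{R}$, which collapses almost all the complex-argument bookkeeping. Since $\lambda < 0$ and $\alpha > 0$ are both real, $\delta_\lambda = (\lambda+\alpha)(\lambda-\alpha)^{-1}$ is real with $|\delta_\lambda| < 1$; hence, in the notation $\xi + \ii\eta = \delta_\lambda^{2^{k_0-1}}$ used before the theorem, we simply have $\eta = 0$ and $\xi = \delta_\lambda^{2^{k_0-1}} \in (-1,1)$, so $\arg[((\lambda-\alpha)^{2^{k_0-1}}+(\lambda+\alpha)^{2^{k_0-1}})/((\lambda-\alpha)^{2^{k_0-1}}-(\lambda+\alpha)^{2^{k_0-1}})] = 0$. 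Consequently $\phi = \arctanh \delta_\lambda^{2^{k_0-1}}$ is real, $\psi = \arctanh \kappa^{-2^{k_0-1}}$ is real, and the identity $\nu = \vartheta \tanh(\phi-\psi)\tanh(\phi+\psi)$ already established becomes an identity between real quantities (up to the sign $\vartheta$), so $|\nu| = |\tanh(\phi-\psi)|\cdot|\tanh(\phi+\psi)|$.

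Next I would bound each factor separately. Because $\arctanh$ is an increasing bijection of $(-1,1)$ onto $\mathbb{R}$ and both $|\delta_\lambda^{2^{k_0-1}}| < 1$ and $\kappa^{-2^{k_0-1}} < 1$, the numbers $\phi$ and $\psi$ are genuine real hyperbolic angles, and $|\tanh(\phi\pm\psi)| < 1$ automatically. The sharper claim comes from the sign of $\psi > 0$ (here we use $\kappa > 1$, so $\kappa^{-2^{k_0-1}} \in (0,1)$ and $\psi > 0$): if $\phi \ge 0$, i.e.\ $\delta_\lambda^{2^{k_0-1}} \ge 0$, then $\tanh(\phi-\psi) \le \tanh\phi$ and $\tanh(\phi+\psi) \ge \tanh\phi$ but one checks via the addition formula that the product $\tanh(\phi-\psi)\tanh(\phi+\psi) = (\tanh^2\phi - \tanh^2\psi)/(1 - \tanh^2\phi\,\tanh^2\psi)$, which is monotone increasing in $\tanh^2\phi$ and monotone decreasing in $\tanh^2\psi$; hence it is at most $\tanh^2\phi = |\delta_\lambda|^{2^{k_0}}$ and at least $-\tanh^2\psi = -\kappa^{-2^{k_0}}$. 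The symmetric argument for $\phi < 0$ gives the same envelope. Therefore $|\nu| = |\tanh^2\phi - \tanh^2\psi|/(1 - \tanh^2\phi\,\tanh^2\psi) \le \max\{\tanh^2\phi,\ \tanh^2\psi\} = \max\{|\delta_\lambda|^{2^{k_0}},\ \kappa^{-2^{k_0}}\}$, which is the assertion.

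The only mild subtlety — and the step I expect to require the most care — is justifying the closed form $\nu = \vartheta\tanh(\phi-\psi)\tanh(\phi+\psi)$ together with the expression for $\phi\pm\psi$ in the real case and verifying that the choice $\gamma = \beta(\kappa^{2^{k_0}}+\vartheta)(\vartheta\kappa^{2^{k_0}}-1)^{-1}$ underlying $\varpi = -\kappa^{-2^{k_0}}$ is admissible (i.e.\ $\gamma > 0$, which needs $\vartheta\beta > 0$ and $\kappa > 1$, exactly the hypotheses). Once that substitution is in place the rest is the elementary monotonicity of $(u-v)/(1-uv)$ in $u,v \in [0,1)$, applied with $u = \tanh^2\phi$ and $v = \tanh^2\psi$; no case split on $\kappa \ge 2$ is needed here, in contrast to Theorem~\ref{theorem-bound}, precisely because the imaginary part $\eta$ vanishes and one never has to pass to the auxiliary angle $\widehat\phi = \arctanh\delta_\lambda^{2^{k_0-2}}$. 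I would phrase the write-up as: reduce to real hyperbolic angles, invoke the product identity, then apply the two monotonicities, citing Lemma~\ref{lemma-tanh} for the inequalities on $|\tanh|$ if a ready-made statement is available there.
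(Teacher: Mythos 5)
Your proposal is correct and follows essentially the same route as the paper: both reduce to real hyperbolic angles and bound $\nu=\vartheta\,(u-v)/(1-uv)$ with $u=|\delta_\lambda|^{2^{k_0}}$, $v=\kappa^{-2^{k_0}}$ by elementary monotonicity. The paper writes this as a single $\tanh(\phi+\psi)$ with $\phi=\arctanh\delta_\lambda^{2^{k_0}}>0$, $\psi=\arctanh(-\kappa^{-2^{k_0}})<0$ and splits on the sign of $\phi+\psi$, whereas you keep the product form $\tanh(\phi-\psi)\tanh(\phi+\psi)$ and use the two-variable monotonicity of $(u-v)/(1-uv)$; these are trivially equivalent reformulations of the same argument.
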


\begin{proof}
Let $\phi \equiv \arctanh \delta_\lambda^{2^{k_0}}$,
then  $\phi=\frac{1}{2}
\ln\left[\frac{(\lambda-\alpha)^{2^{k_0}}+(\lambda+\alpha)^{2^{k_0}}}{(\lambda-\alpha)^{2^{k_0}}-(\lambda+\alpha)^{2^{k_0}}}\right]>0$
since $\lambda<0$, and $\psi \equiv \arctanh(-\kappa^{-2^{k_0}})
=-\frac{1}{2}\ln\left(\frac{\kappa^{2^{k_0}}+1}{\kappa^{2^{k_0}}-1}\right)<0$.
From the definition of $\nu$, we have $\nu=\vartheta\tanh(\phi+\psi)$.
Because  $\tanh(\omega)=(\ee^{\omega}-\ee^{-\omega})(\ee^{\omega}+\ee^{-\omega})^{-1}$, 
$\tanh(-\omega)=-\tanh(\omega)$ and $\tanh(\omega)$ is
nondecreasing with  respect to $\omega\in\mathbb{R}$, 
then when $\phi\geq|\psi|$ we have $0\leq|\nu|=\tanh(\phi+\psi)\leq\tanh(\phi)$. 
Otherwise for   $\phi<|\psi|$, we have $|\nu|=\tanh(-\psi-\phi)<\tanh(-\psi)=\kappa^{-2^{k_0}}$.
Hence, the result holds.
\end{proof}

To sum up, we propose the DCT to avoid  
the potential interruption of the DA caused by $1\in\sigma(F_{k_0})$ for some 
$k_0$. We have conducted a detailed analysis on  
the eigenvalue $\nu$ of the new pair $(M_{k_0+1}, L_{k_0+1})$,  
produces a sharp bound of $|\nu|$ in Theorem~\ref{theorem-bound} relative to $|\delta_\lambda|^{2^{k_0-2}}$. 
Furthermore, Theorem~\ref{theorem-bound} and Corollary~\ref{corollary-bound} imply that a double-Cayley step reverses the convergence \emph{at worst by two steps in general and not at all when $\lambda$ is real}. This 
guarantees the convergence of the DA when the DCT  is only occasionally called for. 
Similar comments apply when there exist some singular value $\sigma\in \sigma (F_{k_0})$  
close to unity, meaning  $I - \overline{F}_{k_0} F_{k_0}$ is ill-conditioned, 
and the double-Cayley remedy is applied. 

Note that the DCT is applicable when $\vartheta \notin \lambda(E_{k_0})$ with 
$\vartheta \in \{-1, 1\}$. In the rare occasions when the  
condition is violated, the three-recursion 
remedy proposed in subsection~3.4 will be employed.

We construct an example  to show  the need for 
the DCT. 
\begin{Example}\rm
Let $A=A^{\HH}, B=B^{\T} \in\mathbb{C}^{5\times 5}$ with 
\begin{align*}
	A=\begin{bsmallmatrix*}[r]
   0.6607           &  0.1299 - 0.1365\ii  &  0.0632 - 0.0086\ii & -0.0341 - 0.0517\ii & -0.0628 - 0.0044\ii\\
   0.1299 + 0.1365\ii &  0.2441            & -0.1293 - 0.1035\ii & -0.0363 + 0.1567\ii &  0.1042 + 0.1260\ii\\
   0.0632 + 0.0086\ii & -0.1293 + 0.1035\ii  &  0.6772           &  0.0236 + 0.0491\ii &  0.0542 + 0.0113\ii\\
  -0.0341 + 0.0517\ii & -0.0363 - 0.1567\ii  &  0.0236 - 0.0491\ii &  0.6804           & -0.0326 + 0.0427\ii\\
  -0.0628 + 0.0044\ii &  0.1042 - 0.1260\ii  &  0.0542 - 0.0113\ii & -0.0326 - 0.0427\ii &  0.6787
\end{bsmallmatrix*},
\\
B=\begin{bsmallmatrix*}[r]
  -0.5704 + 0.2984\ii  & -0.4605 - 0.0324\ii &  0.1693 - 0.3006\ii & -0.1181 + 0.4597\ii &  0.2109 + 0.0879\ii\\
  -0.4605 - 0.0324\ii  &  0.0573 - 0.1759\ii & -0.1520 + 0.0419\ii & -0.1526 - 0.0408\ii &  0.1452 - 0.2288\ii\\
   0.1693 - 0.3006\ii  & -0.1520 + 0.0419\ii &  0.4908 - 0.7534\ii &  0.1880 - 0.0406\ii & -0.1733 - 0.1743\ii\\
  -0.1181 + 0.4597\ii  & -0.1526 - 0.0408\ii &  0.1880 - 0.0406\ii & -0.1783 - 0.6552\ii & -0.5212 + 0.1871\ii\\
   0.2109 + 0.0879\ii  &  0.1452 - 0.2288\ii & -0.1733 - 0.1743\ii & -0.5212 + 0.1871\ii & -0.2548 - 0.7032\ii
\end{bsmallmatrix*}.
\end{align*}
By setting $\alpha=1$ and with the formulae in Theorem~\ref{theorem-SSF-1}, we 
have  $E_0=E_{\alpha}$ and $F_0=F_{\alpha}$:
\begin{align*}
	E_0=\begin{bsmallmatrix*}[r]
   1.2482           &  0.4505 - 0.4735\ii &  0.2193 - 0.0298\ii & -0.1182 - 0.1794\ii & -0.2179 - 0.0152\ii\\
   0.4505 + 0.4735\ii & -0.1966           & -0.4485 - 0.3591\ii & -0.1259 + 0.5435\ii &  0.3613 + 0.4371\ii\\
   0.2193 + 0.0298\ii & -0.4485 + 0.3591\ii &  1.3055           &  0.0817 + 0.1703\ii &  0.1880 + 0.0391\ii\\
  -0.1182 + 0.1794\ii & -0.1259 - 0.5435\ii &  0.0817 - 0.1703\ii &  1.3166           & -0.1132 + 0.1482\ii\\
  -0.2179 + 0.0152\ii &  0.3613 - 0.4371\ii &  0.1880 - 0.0391\ii & -0.1132 - 0.1482\ii &  1.3105
\end{bsmallmatrix*},
\\
F_0=\begin{bsmallmatrix*}[r]
  -1.0682 - 0.5623\ii & -0.8603 + 0.0680\ii  &  0.3168 + 0.5662\ii & -0.2188 - 0.8623\ii  &  0.3967 - 0.1673\ii\\
  -0.8603 + 0.0680\ii &  0.0883 + 0.3226\ii  & -0.2885 - 0.0846\ii & -0.2898 + 0.0820\ii  &  0.2745 + 0.4354\ii\\
   0.3168 + 0.5662\ii & -0.2885 - 0.0846\ii  &  0.9207 + 1.4103\ii &  0.3503 + 0.0768\ii  & -0.3258 + 0.3290\ii\\
  -0.2188 - 0.8623\ii & -0.2898 + 0.0820\ii  &  0.3503 + 0.0768\ii & -0.3329 + 1.2301\ii  & -0.9749 - 0.3510\ii\\
   0.3967 - 0.1673\ii &  0.2745 + 0.4354\ii  & -0.3258 + 0.3290\ii & -0.9749 - 0.3510\ii  & -0.4766 + 1.3165\ii
\end{bsmallmatrix*}.
\end{align*}
Applying the DA to $E_0$   and $F_0$ for $5$ iterations, 
we obtain: 
\begin{align*}
	E_5=\begin{bsmallmatrix*}[r]
   1.5012           & -0.0992 + 0.1043\ii & -0.0483 + 0.0066\ii &   0.0260 + 0.0395\ii    &  0.0480 + 0.0034\ii\\
  -0.0992 - 0.1043\ii &  1.8195           &  0.0988 + 0.0791\ii &   0.0277 - 0.1197\ii    & -0.0796 - 0.0963\ii\\
  -0.0483 - 0.0066\ii &  0.0988 - 0.0791\ii &  1.4886           &  -0.0180 - 0.0375\ii    & -0.0414 - 0.0086\ii\\
   0.0260 - 0.0395\ii &  0.0277 + 0.1197\ii & -0.0180 + 0.0375\ii &   1.4861              &  0.0249 - 0.0326\ii\\
   0.0480 - 0.0034\ii & -0.0796 + 0.0963\ii & -0.0414 + 0.0086\ii &   0.0249 + 0.0326\ii    &  1.4875
   \end{bsmallmatrix*},
\\
F_5=\begin{bsmallmatrix*}[r]
 -0.9956 - 0.6352\ii &  -0.7338 + 0.3015\ii  &  0.2834 + 0.6319\ii & -0.1291 - 0.8499\ii &   0.4238 - 0.2379\ii\\
 -0.7338 + 0.3015\ii &  -0.5359 + 0.0786\ii  & -0.3942 - 0.2753\ii & -0.4018 + 0.2612\ii &   0.3380 + 0.6318\ii\\
  0.2834 + 0.6319\ii &  -0.3942 - 0.2753\ii  &  0.9025 + 1.2909\ii &  0.2689 + 0.0968\ii &  -0.3410 + 0.3895\ii\\
 -0.1291 - 0.8499\ii &  -0.4018 + 0.2612\ii  &  0.2689 + 0.0968\ii & -0.2733 + 1.2440\ii &  -0.8719 - 0.3476\ii\\
  0.4238 - 0.2379\ii &   0.3380 + 0.6318\ii  & -0.3410 + 0.3895\ii & -0.8719 - 0.3476\ii &  -0.4230 + 1.2122\ii
  \end{bsmallmatrix*}.
\end{align*}
The singular values~\cite{gv} of $F_5$ 
are  
$\{1.9376, \  1.9376, \ 1.9376,\  1.9376, \ 1 \}$. 
Hence, the next doubling step  breaks down and the DCT  
is required to carry the DA forward. 
\end{Example}

\subsection{Three-recursion remedy}

This subsection is devoted to resolve the issue that the DCT fails. 
Especially, one may apply the three-recursion remedy from this section when two step reversions occur 
with some complex eigenvalues for $H$. 

Let $Z = Z^{\T} \in \mathbb{C}^{n \times n}$ (which may be chosen randomly) 
and $I_n + F_k^{\T} Z$ be nonsingular.   
Write $P_k=(I_n + F_k Z)^{-1}E_k$, $G_k=(I_n + F_k Z)^{-1}F_k^{\T}$ and  
$H_k = (F_k +Z) - E_k^{\T} Z (I_n + F_k Z)^{-1}E_k$. 
The following lemma shows how we transform the two recursions for $E_k$ and $F_k$ to three. 

\begin{lemma}\label{lm:three_recursion_initial}
For the decomposition \eqref{eq:eigen_decomp} it holds that 
\begin{align}\label{eq:doubling_pure_three}
	&
	\begin{bmatrix}
		P_k & \mathbf 0 \\ \\ H_k & I_n 
	\end{bmatrix}
	\begin{bmatrix}
		I & \mathbf 0 \\ \\  -Z& I_n
	\end{bmatrix}
	\begin{bmatrix}
		X_1&W_{1,\omega}& \overline{X}_2 & Z_{1,\omega}\\ \\
		X_2&W_{2,\omega}& \overline{X}_1& Z_{2,\omega}
	\end{bmatrix} \nonumber
	\\
	=&
	\begin{bmatrix}
		I_n& G_k \\ \\ \mathbf 0 & P_k^{\T}  
	\end{bmatrix}
	\begin{bmatrix}
		I & \mathbf 0\\ \\ -Z& I_n
	\end{bmatrix}
	\begin{bmatrix}
		X_1&W_{1,\omega}& \overline{X}_2 & Z_{1,\omega}\\ \\
		X_2&W_{2,\omega}& \overline{X}_1&Z_{2,\omega}
	\end{bmatrix}
	\widetilde S_{\alpha}^{2^k},
\end{align}
where $X_1, X_2, W_{1,\omega}, W_{2,\omega}, Z_{1,\omega}, Z_{2,\omega}$ and $\widetilde {S}_{\alpha}^{2^k}$ 
are defined as in \eqref{eq:doubling_pure}. 
\end{lemma}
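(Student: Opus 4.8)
The plan is to derive \eqref{eq:doubling_pure_three} from the eigenvector relation \eqref{eq:doubling_pure} by a left‑multiplication together with a triangular change of basis of the eigenvector block. Write $\mathbf V$ for the $2n\times 2n$ matrix $\left[\begin{smallmatrix} X_1&W_{1,\omega}&\overline{X}_2&Z_{1,\omega}\\ X_2&W_{2,\omega}&\overline{X}_1&Z_{2,\omega}\end{smallmatrix}\right]$ that appears on both sides of \eqref{eq:doubling_pure}, so that (combining Lemmas~\ref{lemme-doubling} and~\ref{eigenML0} with the decomposition \eqref{eq:eigen_decomp}, exactly as in the proof of Theorem~\ref{thm:conv_pure}) one has $M_k\mathbf V=L_k\mathbf V\widetilde S_\alpha^{2^k}$ with $M_k,L_k$ as in \eqref{MLX}. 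Introduce
\[
T=\begin{bmatrix}I_n&\mathbf 0\\ Z&I_n\end{bmatrix},\qquad
N=\begin{bmatrix}(I_n+F_kZ)^{-1}&\mathbf 0\\ -E_k^{\T}Z(I_n+F_kZ)^{-1}&I_n\end{bmatrix};
\]
both are well defined because $I_n+F_k^{\T}Z=I_n+F_kZ$ is nonsingular by hypothesis, and both are invertible, with $T^{-1}=\left[\begin{smallmatrix}I_n&\mathbf 0\\ -Z&I_n\end{smallmatrix}\right]$. Since $M_k\mathbf V=(M_kT)(T^{-1}\mathbf V)$ and $L_k\mathbf V=(L_kT)(T^{-1}\mathbf V)$, left‑multiplying the eigenvector relation by $N$ gives $(NM_kT)(T^{-1}\mathbf V)=(NL_kT)(T^{-1}\mathbf V)\widetilde S_\alpha^{2^k}$, and because $T^{-1}\mathbf V=\left[\begin{smallmatrix}I_n&\mathbf 0\\ -Z&I_n\end{smallmatrix}\right]\mathbf V$ this is precisely \eqref{eq:doubling_pure_three}, provided one shows
\[
NM_kT=\begin{bmatrix}P_k&\mathbf 0\\ H_k&I_n\end{bmatrix},\qquad
NL_kT=\begin{bmatrix}I_n&G_k\\ \mathbf 0&P_k^{\T}\end{bmatrix}.
\]

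The first identity is immediate: a direct product gives $M_kT=\left[\begin{smallmatrix}E_k&\mathbf 0\\ F_k+Z&I_n\end{smallmatrix}\right]$, so the blocks of $NM_kT$ are $(I_n+F_kZ)^{-1}E_k$, $\mathbf 0$, $-E_k^{\T}Z(I_n+F_kZ)^{-1}E_k+(F_k+Z)$ and $I_n$, which are $P_k$, $\mathbf 0$, $H_k$ and $I_n$ by the definitions of $P_k$ and $H_k$; nothing beyond those definitions is used. For the second identity one computes $L_kT=\left[\begin{smallmatrix}I_n+\overline{F}_kZ&\overline{F}_k\\ \overline{E}_kZ&\overline{E}_k\end{smallmatrix}\right]$ from the form of $L_k$ in \eqref{MLX}, and must show that the four blocks of $NL_kT$ collapse to $I_n$, $G_k$, $\mathbf 0$ and $P_k^{\T}$. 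This is where the structural facts $E_k^{\HH}=E_k$ (hence $E_k^{\T}=\overline{E}_k$), $F_k^{\T}=F_k$ and $Z^{\T}=Z$ enter, together with the push‑through identities $(I_n+F_kZ)^{-1}F_k=F_k(I_n+ZF_k)^{-1}$ and $Z(I_n+F_kZ)^{-1}=(I_n+ZF_k)^{-1}Z$; these same identities also make $G_k=(I_n+F_kZ)^{-1}F_k^{\T}$ complex symmetric and give $H_k^{\T}=H_k$, so that the new pair is genuinely in SSF-1.

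The step I expect to be the real obstacle is exactly this last block verification: matching the $(1,2)$ block of $NL_kT$ with $G_k$, the $(2,2)$ block with $P_k^{\T}=\overline{E}_k(I_n+ZF_k)^{-1}$, and confirming that the $(1,1)$ block reduces to $I_n$ and the $(2,1)$ block vanishes. Each of these reduces to a short Sherman--Morrison‑type manipulation, but it is precisely here --- not in any routine block bookkeeping --- that the Hermitian and complex‑symmetric structure of $E_k$, $F_k$ and the symmetry of $Z$ are indispensable; the bar on $F_k$ in $L_kT$ has to be absorbed against $F_k^{\T}=F_k$ via the push‑through identities. Once the two displayed identities for $NM_kT$ and $NL_kT$ are established, \eqref{eq:doubling_pure_three} follows by substituting $T^{-1}\mathbf V=\left[\begin{smallmatrix}I_n&\mathbf 0\\ -Z&I_n\end{smallmatrix}\right]\mathbf V$ into $(NM_kT)(T^{-1}\mathbf V)=(NL_kT)(T^{-1}\mathbf V)\widetilde S_\alpha^{2^k}$.
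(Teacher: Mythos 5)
Your strategy is exactly the paper's: its proof introduces the same matrix $\Phi$ (your $N$) and the same triangular factor $T$, asserts the two block identities $\Phi M_kT=\left[\begin{smallmatrix}P_k&\mathbf 0\\ H_k&I_n\end{smallmatrix}\right]$ and $\Phi L_kT=\left[\begin{smallmatrix}I_n&G_k\\ \mathbf 0&P_k^{\T}\end{smallmatrix}\right]$, and concludes from \eqref{eq:doubling_pure} by inserting $TT^{-1}$. Your verification of the first identity is complete and correct.

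The gap is in the second identity, precisely the step you yourself flag as ``the real obstacle'' and then defer to ``a short Sherman--Morrison-type manipulation''. With the definitions as stated ($P_k=(I_n+F_kZ)^{-1}E_k$, etc., and $N_{11}=(I_n+F_kZ)^{-1}$), the $(1,1)$ block of $NL_kT$ is $(I_n+F_kZ)^{-1}(I_n+\overline F_kZ)$, which equals $I_n$ only when $\overline F_kZ=F_kZ$, i.e.\ (for generic invertible $Z$) only when $F_k$ is real. The mechanism you propose --- that ``the bar on $F_k$ has to be absorbed against $F_k^{\T}=F_k$'' --- cannot work: complex symmetry gives $\overline F_k=F_k^{\HH}$, not $\overline F_k=F_k$, and the push-through identities you list involve no conjugation, so none of them can remove the bar. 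The same obstruction reappears in the $(2,2)$ block, where matching $P_k^{\T}$ would require $ZF_k=Z\overline F_k$. The identity does hold once $F_kZ$ is replaced by $\overline F_kZ=F_k^{\HH}Z$ throughout the definitions of $P_k$, $G_k$, $H_k$ and of $N$: then $(NL_kT)_{11}=I_n$ trivially, $(NL_kT)_{21}=-E_k^{\T}Z+\overline E_kZ=0$ by $E_k^{\HH}=E_k$, $(NL_kT)_{22}=E_k^{\T}(I_n+Z\overline F_k)^{-1}=P_k^{\T}$, and $G_k^{\T}=G_k$ follows from push-through applied to $\overline F_k$. This appears to be a sign/conjugation slip in the set-up that the paper's own one-line proof also glosses over, but as written your argument stops exactly at the point where the computation breaks, so the crucial identity is neither proved nor provable in the form you state it.
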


\begin{proof}
Define $\Phi = \begin{bmatrix}
	(I_n + F_k Z)^{-1} & \mathbf 0\\ \\ -E_k^{\T} Z(I_n + F_k Z)^{-1} & I_n
\end{bmatrix}$, then we deduce that 
\begin{align*}
	\Phi \begin{bmatrix}
	E_k & \mathbf 0\\ \\ F_k & I_n
\end{bmatrix}
\begin{bmatrix}
	I_n & \mathbf 0\\ \\ Z& I_n
\end{bmatrix}
= 
\begin{bmatrix}
	P_k& \mathbf 0\\ \\ H_k & I_n
\end{bmatrix}, 
& \qquad 
\Phi \begin{bmatrix}
	I_n & \overline F_k  \\ \\ \mathbf 0 &  \overline E_k
\end{bmatrix}
\begin{bmatrix}
	I_n & \mathbf 0\\  \\ Z& I_n
\end{bmatrix}
= 
\begin{bmatrix}
	I_n & G_k \\ \\  \mathbf 0 & P_k^{\T}
\end{bmatrix}. 
\end{align*}
With $\begin{bmatrix}
	I_n & \mathbf 0\\ \\ Z & I_n
\end{bmatrix}^{-1} = \begin{bmatrix}
	I_n & \mathbf 0 \\ \\ -Z & I_n
\end{bmatrix}$, the result follows from \eqref{eq:doubling_pure}.
\end{proof}

Since $F_k^{\T} = F_k$ and $Z^{\T} = Z$, we have $G_k^{\T} = G_k$ and $H_k^{\T} = H_k$. 
Applying the doubling algorithms \cite{lx06} for CARE and DARE, provided that 
$(I_n - G_{k+j-1} H_{k+j-1})^{-1}$  are well-defined for $j\geq 1$,  we formulate  
the three recursions for $P_{k+j}, G_{k+j}$ and $H_{k+j}$  as below:
\begin{equation}\label{eq:three_recursions}
	\begin{aligned}
		P_{k+j} &= P_{k+j-1} (I_n - G_{k+j-1} H_{k+j-1})^{-1} P_{k+j-1},\\ 
		G_{k+j} &= G_{k+j-1} + P_{k+j-1} (I_n - G_{k+j-1} H_{k+j-1})^{-1} G_{k+j-1} P_{k+j-1}^{\T},\\
		H_{k+j} &= H_{k+j-1} + P_{k+j-1}^{\T} H_{k+j-1}  (I_n - G_{k+j-1} H_{k+j-1})^{-1}  P_{k+j-1}, 
	\end{aligned}
\end{equation}
where $G_{k+j}^{\T} = G_{k+j}$ and $H_{k+j}^{\T} = H_{k+j}$. 
It is worthwhile to point that when $I_n - G_{k+j} H_{k+j}$ is singular or ill-conditioned, 
we can always randomly choose some other $Z^{\T} = Z \in \mathbb{C}^{n\times n}$ and construct $\Psi\in \mathbb{C}^{2n\times 2n}$ 
such that 
\begin{align*}
	\Psi \begin{bmatrix}
		P_{k+j} &\mathbf 0 \\ \\ H_{k+j} & I_n
	\end{bmatrix}\begin{bmatrix}
		I_n & \mathbf 0 \\ \\ Z& I_n
	\end{bmatrix}
	= 
	\begin{bmatrix}
		\widetilde P_{k+j} & \mathbf 0\\ \\ \widetilde H_{k+j} & I_n
	\end{bmatrix},
	& \qquad
	\Psi \begin{bmatrix}
		I_n &  G_{k+j} \\  \\ \mathbf 0  & P_{k+j}^{\T}
	\end{bmatrix}\begin{bmatrix}
		I_n & \mathbf 0 \\ \\ Z& I_n
	\end{bmatrix}
	= 
	\begin{bmatrix}
		I_n &	\widetilde G_{k+j} \\  \\ \mathbf 0 & \widetilde P_{k+j}^{\T}
	\end{bmatrix}.
\end{align*}

Provided that $I_n - G_{k+j}H_{k+j}$ are well-conditioned for all $j \geq0$, the following two theorems 
demonstrate the convergence of the three recursions specified in \eqref{eq:three_recursions}.

\begin{theorem}\label{thm:convergence_three_recursions}
	Upon the assumption in Theorem~\ref{convergence},	
	it holds that $\lim_{k\to \infty} P_{k} =0$ and $\lim_{k\to \infty} H_{k} = Z-X_2X_1^{-1}$, 
	both converging quadratically.  
\end{theorem}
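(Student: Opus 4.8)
The plan is to reduce the convergence of the three-recursion iteration \eqref{eq:three_recursions} to the already-established quadratic convergence in Theorem~\ref{convergence}, by exploiting the explicit conjugation in Lemma~\ref{lm:three_recursion_initial}. First I would observe that the matrix factor
\[
\begin{bmatrix} I_n & \mathbf 0 \\ -Z & I_n \end{bmatrix}
\begin{bmatrix} X_1 & W_{1,\omega} & \overline{X}_2 & Z_{1,\omega} \\ X_2 & W_{2,\omega} & \overline{X}_1 & Z_{2,\omega} \end{bmatrix}
\]
has first block-column $[X_1^{\T},\,(X_2-ZX_1)^{\T}]^{\T}$, and that by Lemma~\ref{lm:three_recursion_initial} the pair $(\begin{bmatrix}P_k&\mathbf 0\\ H_k&I_n\end{bmatrix},\begin{bmatrix}I_n&G_k\\ \mathbf 0&P_k^{\T}\end{bmatrix})$ satisfies exactly the same deflating-subspace relation (with the same $\widetilde S_\alpha^{2^k}$) as $(M_k,L_k)$ does in \eqref{eq:doubling_pure}, but with $X_2$ replaced by $X_2-ZX_1$ in the bottom block. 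Hence the three recursions \eqref{eq:three_recursions} are precisely the doubling iteration for this shifted pair, which is in SSF-1 in the Riccati sense; the iteration is the one analyzed in \cite{lx06}.

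Next I would invoke Theorem~\ref{convergence} (or directly \cite[Corollary~3.2]{lx06}) applied to this shifted pair. Under the assumption of Theorem~\ref{convergence}, namely that $H$ has no purely imaginary eigenvalue, $HX=XS$ with $\lambda(S)\subseteq\mathbb{C}_-$ and $X_1$ nonsingular, the shifted first block-column $[X_1^{\T},\,(X_2-ZX_1)^{\T}]^{\T}$ is still a basis for the stable deflating subspace with the same spectral data $S_\alpha$ of spectral radius below $1$, and the shifted ``$X_1$'' component is the same $X_1$, hence still nonsingular. Since $\lambda(S_\alpha)$ lies strictly inside the unit disk, the doubling iteration \eqref{eq:three_recursions} converges quadratically with $P_k\to 0$ and $H_k\to -(X_2-ZX_1)X_1^{-1}=Z-X_2X_1^{-1}$, which is precisely the claimed limit. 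The $G_k$ component (dual Riccati solution) converges to $-\overline{X_2}\,\overline{X_1}^{-1}$ by the conjugate-eigenpair structure, though only $P_k$ and $H_k$ are asserted here.

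The one technical point to verify carefully is that the reduction in Lemma~\ref{lm:three_recursion_initial} is valid at $k$ and that the iteration \eqref{eq:three_recursions} is genuinely the doubling transformation of the starting pair, i.e.\ that the quantities $(I_n-G_{k+j-1}H_{k+j-1})^{-1}$ in \eqref{eq:three_recursions} agree with the $I_n-\overline{F}\,F$-type factors in the standard doubling formula; this is exactly the standing hypothesis ``$I_n-G_{k+j}H_{k+j}$ are well-conditioned for all $j\ge 0$'' in the statement, so no extra work is needed. I would therefore write: \emph{By Lemma~\ref{lm:three_recursion_initial}, the iteration \eqref{eq:three_recursions} is the structure-preserving doubling iteration for the symplectic pair with stable deflating subspace spanned by $[X_1^{\T},\,(X_2-ZX_1)^{\T}]^{\T}$ and spectral data $S_\alpha$; applying Theorem~\ref{convergence} to this pair yields $P_k\to 0$ and $H_k\to Z-X_2X_1^{-1}$ quadratically.} The main obstacle, if any, is simply book-keeping: confirming that the block in the $(2,1)$ position of the shifted eigenvector matrix is exactly $X_2-ZX_1$ and that none of the nonsingularity hypotheses are lost under the shift by $Z$; both are immediate from $\begin{bmatrix}I_n&\mathbf 0\\ -Z&I_n\end{bmatrix}$ being unit lower triangular. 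I expect the proof in the paper to be a one- or two-line appeal to Lemma~\ref{lm:three_recursion_initial} and Theorem~\ref{convergence} (or \cite[Corollary~3.2]{lx06}) along these lines.
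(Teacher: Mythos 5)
Your proposal is correct and follows essentially the same route as the paper: the paper's proof simply displays the deflating-subspace relation for the shifted pair with eigenvector matrix $\bigl[\begin{smallmatrix} X_1 & \overline{X}_2 \\ X_2 - Z X_1 & \overline{X}_1 - Z\overline{X}_2\end{smallmatrix}\bigr]$, notes $S_{\alpha}^{2^k}\to 0$, and defers the remaining details to \cite[Corollary~3.2]{lx06}, exactly as you do.
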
  
\begin{proof}
	The results follow from the fact 
	\begin{align*}
		&	\begin{bmatrix}
		P_k& \mathbf 0\\ \\ H_k & I_n
	\end{bmatrix} \begin{bmatrix}
		X_1 & \overline{X}_2 \\ \\ X_2 - Z X_1 & \overline{X}_1 - Z \overline X_2 
	\end{bmatrix} 
	\\
	=&
	\begin{bmatrix}
		I_n & G_k \\ \\ \mathbf 0 & P_k^{\T}
	\end{bmatrix}\begin{bmatrix}
		X_1 & \overline{X}_2 \\ \\ X_2 - Z X_1 & \overline{X}_1 - Z \overline X_2
	\end{bmatrix}
	\begin{bmatrix}
		S_{\alpha}^{2^k} &\\ \\ & \overline{S}_{\alpha}^{-2^k}
	\end{bmatrix}
\end{align*}
and $\lim_{k\to \infty} S_{\alpha}^{2^k}=0$. We omit the details, as in \cite[Corollary~3.2]{lx06}.  
\end{proof}

\begin{theorem}\label{thm:convergence_three_recursions_pure}
	Under the assumption in Theorem~\ref{thm:conv_pure},	
	it holds that $\lim_{k\to \infty} P_{k} =0$ and $\lim_{k\to \infty} H_{k} = Z-X_2X_1^{-1}$, 
	both converging linearly.  
\end{theorem}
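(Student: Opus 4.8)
The plan is to run the same argument used for Theorem~\ref{thm:convergence_three_recursions}, but with Theorem~\ref{thm:conv_pure} (and the estimate behind it, \cite[Theorem~4.2]{hl}) supplying the eigen-structure input in place of Theorem~\ref{convergence}. First I would recall from Lemma~\ref{lm:three_recursion_initial} that the pencil $\left(\left[\begin{smallmatrix}P_k&\mathbf 0\\ H_k&I_n\end{smallmatrix}\right],\left[\begin{smallmatrix}I_n&G_k\\ \mathbf 0&P_k^{\T}\end{smallmatrix}\right]\right)$, pre-composed with the shear $\left[\begin{smallmatrix}I&\mathbf 0\\ -Z&I_n\end{smallmatrix}\right]$, satisfies the deflating-subspace identity \eqref{eq:doubling_pure_three} with the fixed basis $\Theta\equiv[X_1;X_2\mid W_{1,\omega};W_{2,\omega}\mid\overline X_2;\overline X_1\mid Z_{1,\omega};Z_{2,\omega}]$ and the Cayley-transformed block $\widetilde S_\alpha^{2^k}$ — exactly as $(M_k,L_k)$ does in \eqref{eq:doubling_pure}. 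Since the three recursions \eqref{eq:three_recursions} are the CARE/DARE doubling iteration of \cite{lx06} applied to this transformed pencil, Lemma~\ref{lemme-doubling} shows the identity is preserved at every step, the exponent passing from $2^k$ to $2^{k+1}$; equivalently, one verifies directly that \eqref{eq:three_recursions} advances \eqref{eq:doubling_pure_three}.

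Next I would insert the decomposition \eqref{eq:wtdS2^k} of $\widetilde S_\alpha^{2^k}$: after the fixed similarity $\Pi_\omega\mathcal D\Pi_\omega^{\T}$ it is block-triangular with diagonal blocks $S_\alpha^{2^k}$, $J_{\omega,\theta}^{2^k}$, $\overline S_\alpha^{-2^k}$, $J_{\omega,\theta}^{2^k}$ and the single coupling $\Omega_{\omega,\theta,k}$. Partitioning the columns of $\Theta$ into those associated with $\{S_\alpha,J_{\omega,\theta}\}$ (the non-expanding directions, whose stable-plus-leading-Jordan combination the shear carries to $\widehat X_1\equiv[X_1,\Psi_{11}]$ in the top block and to $\widehat X_2-Z\widehat X_1$, $\widehat X_2\equiv[X_2,\Psi_{21}]$, in the bottom block) and the remaining columns, reading the two block rows of the identity gives relations of the shape $P_k\widehat X_1=\widehat X_1\Xi_k+G_k(\widehat X_2-Z\widehat X_1)\Xi_k$ and $H_k\widehat X_1+(\widehat X_2-Z\widehat X_1)=P_k^{\T}(\widehat X_2-Z\widehat X_1)\Xi_k$, where $\Xi_k$ assembles $S_\alpha^{2^k}$ together with the leading, $\mathcal D$-normalized columns of $J_{\omega,\theta}^{2^k}$ and $\Omega_{\omega,\theta,k}$. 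Because $\rho(S_\alpha)<1$ and, after normalization, the unimodular-Jordan contributions in $\Xi_k$ are $\mathcal O(2^{-k})$, we get $\Xi_k\to 0$ linearly; combined with the nonsingularity of $[X_1,\Psi_{11}]$ (the hypothesis of Theorem~\ref{thm:conv_pure}) and the boundedness of $G_k$, this yields $P_k\to 0$ and $H_k\widehat X_1\to Z\widehat X_1-\widehat X_2$, i.e.\ $H_k\to Z-\widehat X_2\widehat X_1^{-1}$, the asserted limit $Z-X_2X_1^{-1}$ (with $X_1,X_2$ abbreviating the square blocks $[X_1,\Psi_{11}],[X_2,\Psi_{21}]$, as they must for $X_1$ to be invertible), everything at the linear rate. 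This is exactly the step carried out, as for \cite[Theorem~4.2]{hl}, in the proof of Theorem~\ref{thm:conv_pure}, now phrased in the $P,G,H$ variables.

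The main obstacle is the one already faced in Theorem~\ref{thm:conv_pure}: since $J_{\omega,\theta}^{2^k}$ does not decay, one cannot merely let $\widetilde S_\alpha^{2^k}\to 0$, and must instead exploit the even partial multiplicities and the doubled block form $J_{2p_{r,j}}(\ii\omega_j)=\left[\begin{smallmatrix}J_{p_{r,j}}(\ii\omega_j)&e_{p_{r,j}}e_1^{\T}\\ 0&J_{p_{r,j}}(\ii\omega_j)\end{smallmatrix}\right]$ to show that the relevant off-diagonal-to-diagonal ratios (the binomial ratios inside $J_{\omega,\theta}^{2^k}$ and the entries of $\Omega_{\omega,\theta,k}$) are $\mathcal O(1/2^k)$, and then to check that neither the shear $\left[\begin{smallmatrix}I&\mathbf 0\\ -Z&I_n\end{smallmatrix}\right]$ nor the change of variables $E_k,F_k\mapsto P_k,G_k,H_k$ of Lemma~\ref{lm:three_recursion_initial} degrades this estimate. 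A secondary, routine matter is well-definedness: one keeps $I_n-G_{k+j}H_{k+j}$ nonsingular (re-selecting $Z^{\T}=Z$ via the $\Psi$-reconstruction when needed) and argues that $G_k$ stays bounded so that $P_k\to 0$ follows from the first block row — the analogue of the boundedness arguments behind \cite[Corollary~3.2]{lx06} invoked for Theorem~\ref{thm:convergence_three_recursions}.
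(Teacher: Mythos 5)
Your proposal is correct and takes essentially the same route as the paper: the paper's entire proof is ``by \eqref{eq:doubling_pure_three} and similar to the proof of Theorem~\ref{thm:conv_pure}'' (which itself defers to \cite[Theorem~4.2]{hl}), i.e., exactly the combination of Lemma~\ref{lm:three_recursion_initial}, the decomposition \eqref{eq:wtdS2^k}, and the $\mathcal{O}(2^{-k})$ estimate from the growing coupling block $\Omega_{\omega,\theta,k}$ that you describe. If anything, you supply more detail than the paper does on why the unimodular Jordan part is harmless and on the reading of the theorem's $X_1,X_2$ as the enlarged blocks $[X_1,\Psi_{11}],[X_2,\Psi_{21}]$.
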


\begin{proof}
	By \eqref{eq:doubling_pure_three} and similar to the proof of Theorem~\ref{thm:conv_pure}, 
we obtain the result. 
\end{proof}

\section{Numerical Results}
We illustrate the performance of the DA  
with some test  examples, three of which from 
discretized  Bethe-Salpeter equations and one generated by the \verb|randn| command in MATLAB. 
We also apply  \verb|eig|   in MATLAB  
(as in   \verb|eig|$(H)$ and \verb|eig|$(\Gamma H, \Gamma)$) and Algorithm~1 in~\cite{yang:16} to the test examples for comparison.  
Computing  \verb|eig|$(\Gamma H, \Gamma)$ is based on the equivalence of  
$Hx = \lambda x$  and $\begin{bmatrix}A& B\\ \overline{B}&\overline{A}\end{bmatrix}x = 
\lambda \begin{bmatrix}I_n&\mathbf 0\\ \mathbf 0& -I_n\end{bmatrix}x$.
No DCT or three-recursion remedy was required. 
All algorithms are implemented in  MATLAB 2012b on a 64-bit PC  with an Intel Core~i7 processor at 3.4 GHz and 8G RAM.

\begin{Example}\rm \label{ex1}
We consider three examples from the discretized  Bethe-Salpeter equations for  naphthalene (\ce{C10H8}), 
gallium arsenide (\ce{GaAs}) and  boron nitride (\ce{BN}).  
The dimensions of the corresponding $H$ associated with \ce{C10H8}, 
\ce{GaAs} and \ce{BN} are respectively $64$, $256$ and $4608$.  
All  eigenpairs of $H$ are computed.  

Using \verb|eig|$(H)$ as the baseline for comparison, we present 
 the relative accuracy of the computed eigenvalues and the execution time (eTime)  of  the other three algorithms, all averaged over 50 trials. 
For the relative accuracy, we compute    
$\mathrm{prec} =\log_{10} [\max_{j}| (\lambda_j - \widehat{\lambda}_j)/\lambda_j |]$ 
where $\lambda_j$ and $\widehat{\lambda}_j$ are the computed eigenvalues by the 
\verb|eig|$(H)$ command and  one of the methods, respectively. 
The residuals       
\[ 
	\frac{\|H-[X,\, \Pi\overline{X}]\diag(S, \overline{S}) [X,\,\Pi\overline{X}]^{-1}\|_F}{\|H\|_F}, \ \ \ 
	\frac{\|Y^{\HH}HX - \Lambda\|_F}{\|H\|_F}, 
\] 
respectively for the DA,  
\texttt{eig}$(\Gamma H, \Gamma)$ and \cite[Algorithm~1]{yang:16} are displayed,    
with $Y$ and $X$ being respectively the left and right eigenvector matrices  
and $\Lambda$ the  diagonal matrix containing the eigenvalues of $H$
(please refer to~\cite{yang:16} for details). Also, the numbers of iterations 
required for doubling averaged over 50 trails  are presented. 
It is  worthwhile to point out  that for the DA all $\alpha$'s 
in the 50 trails are generated by the function \texttt{randn}.
The results are tabulated in Table~\ref{table}.

\begin{table}[H]
\footnotesize
\centering
\begin{tabular}{c|c|c|c}
\hline 
\multicolumn{4}{c}{\ce{C10H8}}\\
\hline 
&DA & algorithm 1 in~\cite{yang:16}   &  \verb|eig|$(\Gamma H, \Gamma)$\\
\hline 
prec & $-13.97$ & $-13.92$ & $-13.95$  \\
residual &$8.14\times10^{-16}$& $2.60\times10^{-15}$ & $1.71\times10^{-15}$  \\
eTime & $7.958\times10^{-1}$ & $5.764\times10^{-1}$ & $3.792\times10^{-1}$  \\
iteration &  $6.84$ & $-$   &$-$ \\
\hline 
\multicolumn{4}{c}{\ce{GaAs}}\\
\hline 
&DA & algorithm 1 in~\cite{yang:16}   &  \verb|eig|$(\Gamma H, \Gamma)$\\
\hline 
prec & $-13.74$ & $-13.54$ & $-13.75$\\
residual & $6.86\times10^{-16}$ & $6.33\times 10^{-15}$ & $5.07\times10^{-15}$ \\
eTime & $5.881\times10^{-1}$ & $3.587\times10^{-1}$  & $3.533\times10^{-1}$\\
iteration  & $8.46$ & $-$ & $-$\\
\hline 
\multicolumn{4}{c}{\ce{BN}}\\
\hline 
&DA & algorithm 1 in~\cite{yang:16}   &  \verb|eig|$(\Gamma H, \Gamma)$\\
\hline 
prec & $-13.11$ & $-13.12$ & $-13.04$\\
residual & $7.50\times 10^{-16}$ & $2.54\times 10^{-14}$ & $1.63\times 10^{-14}$\\
eTime &  $6.610\times10^{-1}$ & $4.754\times10^{-1}$ & $4.843\times10^{-1}$\\
iteration &  $7.44$  & $-$ & $-$\\
\hline 
\end{tabular}
\caption{Numerical results for Example~\ref{ex1}}
\label{table}
\end{table}

Table~\ref{table} demonstrates that all three methods produce comparable results in terms of the relative accuracy. The DA spends 
slightly more time than the other methods but produces more accurate 
solutions with smaller residuals. 

\end{Example}

\begin{Example}\rm \label{ex2}
The test  example,   randomly generated by the command \verb|randn| in MATLAB, is designed to 
illustrate the structure-preserving property    of the DA, a distinct feature of our method. 
The defining matrices are 
\begin{align*}
H=\begin{bmatrix}
\ \ A & \ \ B \\ \\ -\overline{B} & -\overline{A}
\end{bmatrix}, \qquad 
A = \begin{bmatrix}
A_1 &   &   \\   & A_2 &   \\   &  & A_3
\end{bmatrix}, & \qquad 
B = \begin{bmatrix}
B_1 &   &   \\   & B_2 &   \\   &  & B_3
\end{bmatrix}
\end{align*}
with 
\begin{align*}
	A_1 & = \begin{bmatrix*}[l]
 2.6361               &  \hm 1.0378\times10^{1}    & \hm  5.0751\times10^{-2}  \\ 
 1.0378\times10^{1}   &  \hm 5.2431\times10^{-2}   & -4.6067\times10^{-1} \\  
 5.0751\times10^{-2}  & -4.6067\times10^{-1}   & -1.6892\times10^{-2} 
\end{bmatrix*}, \\
A_2 & =\begin{bmatrix*}[l]
-4.0549\times10^{-1}    & -3.7710+2.7569 \ii       \\
-3.7710-2.7569 \ii        & -4.0549\times10^{-1}  
\end{bmatrix*}, \\
A_3 &=\begin{bmatrix*}[l]
3.6378\times10^{-1}            & 2.7293\times10^{-1} + 3.5908 \ii \\
2.7293\times10^{-1}-3.5908 \ii   & 3.6378\times10^{-1}
\end{bmatrix*}, \\
B_1 &=\begin{bmatrix*}[l]
-2.6361              &-1.0375\times10^{1}    &-5.1181\times10^{-2} 	 \\
-1.0375\times10^{1}  &-5.3457\times10^{-2}   &\hm 5.0988\times10^{-1}   \\
-5.1181\times10^{-2} &\hm 5.0988\times10^{-1}   &\hm 4.2022\times10^{-3} 
\end{bmatrix*}, \\
B_2 &=\begin{bmatrix}
1.2343\times10^{-1}-3.8788\ii\times10^{-1} & 3.7566-2.7464\ii   \\
3.7566-2.7464\ii                           &	4.0704\times10^{-1}+6.0156\ii\times10^{-5}
\end{bmatrix}, \\
B_3 &=\begin{bmatrix*}[l]
 \hm 3.6148\times10^{-1}-5.5211 \ii \times10^{-2} &  -2.7152\times10^{-1}-3.5722\ii \\
-2.7152\times10^{-1} -3.5722\ii               &  -3.6567\times10^{-1}+5.9265\ii\times10^{-5}
\end{bmatrix*}. 
\end{align*}
The spectrum of $H$ is 
\begin{align*}
\lambda(H) &= 
	\begin{array}[t]{c@{\hspace{0pt}}ll}
		 \{& \pm 4.1204\times10^{-3},  \quad \pm 4.1204\times10^{-3}, &\pm 4.1204\times10^{-3}, \\
		&\pm 4.0549\times10^{-1}\pm 5.9927\ii\times10^{-5},    &  \pm 3.6378 \times10^{-1} \pm 5.8959\ii \times10^{-5} \}.
	\end{array}
\end{align*}
Note that the algebraic and the 
geometric multiplicities of \mbox{$\pm 4.1204\times10^{-3}$} are $3$ and $1$, respectively.  The DA,  \verb|eig|$(H)$ and 
 \verb|eig|$(\Gamma H, \Gamma)$ produce the eigenvalues $\lambda_{D}$,  $\lambda_{E}$  and $\lambda_{Ge}$ respectively: 
\begin{align*}
	\lambda_{D} &=
	\begin{array}[t]{c@{\hspace{0pt}}ll}
		 \{& \pm 4.1092\times10^{-3},  \quad \pm 4.1092\times10^{-3}, &\pm 4.1092\times10^{-3}, \\
		&\pm 4.0549\times10^{-1}\pm 5.9927\ii\times10^{-5},    &  \pm 3.6378 \times10^{-1} \pm 5.8959\ii \times10^{-5} \},
	\end{array}
	\\
	\lambda_{E} &=
	\begin{array}[t]{c@{\hspace{0pt}}rr@{\hspace{0pt}}l}
		 \{& 4.1137\times10^{-3} - 1.1615\ii\times10^{-5},  & 4.1136\times10^{-3}+ 1.1614\ii\times10^{-5}&,    \\
			& 4.1338\times10^{-3} + 1.2681\ii\times10^{-9},  &                                                    \\
			& -4.1136\times10^{-3} - 1.1649\ii\times10^{-5}, &  -4.1136\times10^{-3} + 1.1650\ii\times10^{-5}&, \\
			& -4.1338\times10^{-3} - 1.3011\ii\times10^{-9},&&                                                     \\
		& \pm 4.0549 \times10^{-1} \pm  5.9927\ii\times10^{-5}, & \pm 3.6378\times10^{-1} \pm 5.8959\ii\times10^{-5}&\},
	\end{array}
	\\
	\lambda_{Ge} &= 
	\begin{array}[t]{c@{\hspace{0pt}}rr@{\hspace{0pt}}l}
		\{& 4.1272\times10^{-3}-1.1919\ii \times10^{-5}, &  4.1272\times10^{-3}-1.1919\ii \times10^{-5}&,\\
			& 4.1272\times10^{-3}-1.1919\ii \times10^{-5}, &&\\
			& -4.1272\times10^{-3} + 1.1851\ii \times10^{-5},  &  -4.1272 \times10^{-3}+ 1.1851\ii \times10^{-5}&, \\
			& -4.1272 \times10^{-3} + 1.1851\ii \times10^{-5},&& \\
		& \pm 4.0549 \times10^{-1}  \pm 5.9927\ii \times10^{-5}, & \pm3.6378  \times10^{-1}  \pm 5.8959\ii \times10^{-5}&\}.
	\end{array}
\end{align*}
Although all three  methods produce computed eigenvalues of low relative accuracy, with $prec_D= -2.5680$, $prec_E=-2.4862$ and $prec_{Ge}=-2.4764$, 
the DA  preserves the  distinct eigen-structure of $H$. All eigenvalues from DA appear in  quadruples  
$\{\lambda, \overline{\lambda}, -\lambda, -\overline{\lambda}\}\subseteq\lambda(H)$, unless when $\Im(\lambda)=0$ then in pairs 
$\{\lambda, -\lambda\}\subseteq\lambda(H)$. The low accuracy (in the order of $\pm 4.1204\times10^{-3}$) of the computed eigenvalues from the methods can be attributed to the defective eigenvalues. 
Note that Algorithm~1 in~\cite{yang:16} failed because the required assumption $\Gamma H>0$  is not satisfied.  
\end{Example}

\section{Conclusions} 
In this paper, we propose a doubling algorithm for the discretized Bethe-Salpeter eigenvalue problem, 
where the Hamiltonian-like matrix $H$ is 
firstly transformed to a symplectic pair with  special structure then $E_k = E_k^{\HH}$  and $F_k=F_k^{\T}$  
are computed iteratively. Theorems are proved on the quadratic convergence of the algorithm if no 
purely imaginary eigenvalues exist 
(and linear convergence otherwise). 
The simple double-Cayley transform is designed to deal with any potential breakdown when  
$1$ is in or close to $\sigma(F_k)$ for some $k$. 
We also prove   that at most two  steps of retrogression occur (for complex eigenvalues of $H$, but  
none for real ones). In addition, a three-recursion remedy is put forward when 
the double-Cayley transform fails. 
Numerical examples have been presented to illustrate the efficiency 
and the distinct structure-preserving nature of the doubling method. The optimal choice of $\alpha$ 
and the removal of the invertibility assumption of $X_1$ 
(or $[
	X_1, \Psi_{11} 
]$ if purely imaginary eigenvalues exist)  will be left for future research. 

\section*{Acknowledgements} 
 We thank Prof. Ren-Cang Li for his kindness in providing three test problems in Example~4.1.

\appendix
\section{Useful Lemmas}
The following lemmas are required in Section~\ref{doublesec}.

\begin{lemma}\label{lemma-tanh}
Given $\omega, \zeta \in \mathbb{R}$, it holds that 
\begin{enumerate}
  \item [{\em (a)}]  $|\tanh(-\omega+\ii\zeta)|^2=|\tanh(\omega+\ii\zeta)|^2
=[\ee^{2\omega}+\ee^{-2\omega}-2\cos(2\zeta)][\ee^{2\omega}+\ee^{-2\omega}+2\cos(2\zeta)]^{-1}$; 
  \item [{\em (b)}] $|\tanh(\omega+\ii\zeta)|^2<1$ when $\cos(2\zeta)>0$; and 
  \item [{\em (c)}] for  $\cos(2\zeta)>0$,  $|\tanh(\omega+\ii\zeta)|^2$ 
  is monotonically  nondecreasing with respect to $\omega$ when $\omega\geq 0$, and 
  monotonically  nonincreasing otherwise.   
\end{enumerate}
\end{lemma}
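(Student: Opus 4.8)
The plan is to reduce all three parts to a single elementary identity for $|\tanh(\omega+\ii\zeta)|^2$, then read off (a), (b), (c) in turn. First I would write $\tanh(\omega+\ii\zeta)=\sinh(\omega+\ii\zeta)/\cosh(\omega+\ii\zeta)$ and expand numerator and denominator by the addition formulas, $\sinh(\omega+\ii\zeta)=\sinh\omega\cos\zeta+\ii\cosh\omega\sin\zeta$ and $\cosh(\omega+\ii\zeta)=\cosh\omega\cos\zeta+\ii\sinh\omega\sin\zeta$. Taking squared moduli gives
\[
|\tanh(\omega+\ii\zeta)|^2=\frac{\sinh^2\omega\cos^2\zeta+\cosh^2\omega\sin^2\zeta}{\cosh^2\omega\cos^2\zeta+\sinh^2\omega\sin^2\zeta}.
\]
Then I would substitute $\sinh^2\omega=\tfrac12(\cosh 2\omega-1)$, $\cosh^2\omega=\tfrac12(\cosh 2\omega+1)$, $\cos^2\zeta=\tfrac12(1+\cos 2\zeta)$, $\sin^2\zeta=\tfrac12(1-\cos 2\zeta)$; the mixed $\cosh 2\omega\cos 2\zeta$ contributions cancel and numerator and denominator collapse to $\tfrac12(\cosh 2\omega-\cos 2\zeta)$ and $\tfrac12(\cosh 2\omega+\cos 2\zeta)$, so
\[
|\tanh(\omega+\ii\zeta)|^2=\frac{\cosh 2\omega-\cos 2\zeta}{\cosh 2\omega+\cos 2\zeta}=\frac{\ee^{2\omega}+\ee^{-2\omega}-2\cos 2\zeta}{\ee^{2\omega}+\ee^{-2\omega}+2\cos 2\zeta},
\]
using $2\cosh 2\omega=\ee^{2\omega}+\ee^{-2\omega}$. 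Since $\cosh$ is even in $\omega$, replacing $\omega$ by $-\omega$ leaves this unchanged, which yields the first equality in (a) at no extra cost.

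For (b), I would observe that when $\cos 2\zeta>0$ the denominator $\cosh 2\omega+\cos 2\zeta$ is strictly positive and strictly exceeds the numerator, while the numerator is nonnegative because $\cosh 2\omega\geq 1\geq|\cos 2\zeta|$; hence the ratio lies in $[0,1)$, giving $|\tanh(\omega+\ii\zeta)|^2<1$.

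For (c), I would set $t=\cosh 2\omega\in[1,\infty)$ and $c=\cos 2\zeta>0$ and note that $g(t)=(t-c)/(t+c)$ has derivative $g'(t)=2c/(t+c)^2>0$, so $|\tanh(\omega+\ii\zeta)|^2=g(\cosh 2\omega)$ is a strictly increasing function of $\cosh 2\omega$. Since $\omega\mapsto\cosh 2\omega$ is nondecreasing on $[0,\infty)$ and nonincreasing on $(-\infty,0]$, the asserted monotonicity in $\omega$ follows by composition. I do not expect any genuine obstacle here: the only care needed is the bookkeeping in the hyperbolic/trigonometric reduction of the first display, and checking the sign facts ($\cosh 2\omega\geq 1$ and $c>0$) that keep the denominators positive so the monotonicity of $g$ transfers cleanly to $\omega$.
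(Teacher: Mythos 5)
Your proposal is correct and follows essentially the same route as the paper: the paper simply omits the "simple computations" for (a) and (b) that you supply via the addition and double-angle formulas, and for (c) it differentiates the expression in (a) directly with respect to $\omega$, obtaining a derivative whose sign is that of $(\ee^{2\omega}-\ee^{-2\omega})\cos(2\zeta)$, which is equivalent to your chain-rule factorization through $g(t)=(t-c)/(t+c)$ composed with $t=\cosh 2\omega$.
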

\begin{proof}
Simple computations lead to the two  results (a) and (b),  
and we omit the details here. 
For (c), we have  $\partial |\tanh(\omega+\ii\zeta)|^2/\partial\omega= [8(\ee^{2\omega}-\ee^{-2\omega})\cos(2\zeta)]
[(\ee^{2\omega}+\ee^{-2\omega}+2\cos(2\zeta))^2]^{-1}$. Since $\cos(2\zeta)>0$,  
the result follows. 
\end{proof}

\begin{lemma}\label{lemma-distance}
Define $f(\xi)=(\xi-\tau)^2 + \xi^2$, then for $0\leq\xi\leq\frac{\tau}{2}$, 
we have
\begin{enumerate}
\item [{\em (a)}] $f(\xi)=f(\tau-\xi)$;
\item [{\em (b)}] $f(\xi)\geq f(\eta) \geq \frac{\tau}{\sqrt{2}}$ for all $\eta$ 
                  with $\frac{\tau}{2}\geq \eta \geq \xi$; and 
\item [{\em (c)}] $f(\xi)\geq f(\eta) \geq \frac{1}{\sqrt{2}}$ for all $\eta$  with 
                  $\tau-\xi \geq \eta \geq \frac{\tau}{2}$.  
\end{enumerate}
\end{lemma}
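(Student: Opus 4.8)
The plan is to recognise $f$ as a single upward parabola and reduce all three claims to monotonicity on the two sides of its vertex. First I would complete the square,
\[
f(\xi) = (\xi-\tau)^2 + \xi^2 = 2\xi^2 - 2\tau\xi + \tau^2 = 2\Bigl(\xi - \tfrac{\tau}{2}\Bigr)^2 + \tfrac{\tau^2}{2},
\]
so that $f$ is symmetric about $\xi = \tau/2$, attains its global minimum $f(\tau/2) = \tau^2/2$ there, is strictly decreasing on $(-\infty,\tau/2]$ and strictly increasing on $[\tau/2,\infty)$. Part~(a) is then immediate, since $(\tau-\xi) - \tfrac{\tau}{2} = -\bigl(\xi - \tfrac{\tau}{2}\bigr)$ leaves the squared term, hence $f$, unchanged; equivalently one expands $f(\tau-\xi) = \xi^2 + (\tau-\xi)^2 = f(\xi)$.

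For~(b) I would use that $0 \le \xi \le \eta \le \tau/2$ all lie on the branch where $f$ is nonincreasing, so $f(\xi) \ge f(\eta)$, and that $\eta \le \tau/2$ forces $f(\eta) \ge f(\tau/2) = \tau^2/2$; the closing inequality $\tau^2/2 \ge \tau/\sqrt{2}$ is just $\tau \ge \sqrt{2}$, the regime in which the lemma is applied (e.g. $\tau = \kappa^{2^{k_0-2}} \ge 2$ in Theorem~\ref{theorem-bound}). Part~(c) is the mirror image obtained through~(a): since $0 \le \xi \le \tau/2$ we have $\tau-\xi \ge \tau/2$, so the admissible $\eta$ lie in $[\tau/2,\tau-\xi]$, a subinterval of the increasing branch, whence $f(\tau/2) \le f(\eta) \le f(\tau-\xi) = f(\xi)$ and therefore $f(\eta) \ge \tau^2/2 \ge 1/\sqrt{2}$. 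The point of stating the cut-off as $\tau-\xi$ is precisely that the symmetry in~(a) makes the comparison $f(\eta)\le f(\xi)$ sharp there.

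I do not expect a genuine obstacle: this is elementary calculus for a quadratic. The only things needing care are (i) checking that the endpoints of the $\eta$-intervals in~(b) and~(c) land on the intended side of the vertex $\xi=\tau/2$, so that the right branch of monotonicity applies, and (ii) keeping track of the mild lower bound on $\tau$ under which the explicit constants $\tau/\sqrt{2}$ and $1/\sqrt{2}$ in the last inequalities of~(b) and~(c) hold; both become transparent once $f$ is written in vertex form.
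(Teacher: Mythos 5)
Your core argument---write $f$ in vertex form, read off the symmetry about $\xi=\tau/2$ and the monotonicity on each branch, and deduce (a), $f(\xi)\ge f(\eta)$ in (b), and $f(\xi)=f(\tau-\xi)\ge f(\eta)$ in (c)---is correct and is in substance the same as the paper's proof, which phrases the identical observation geometrically: $f(\xi)$ is the squared distance from the point $(\xi,\xi)$ on the line $y=x$ to $(\tau,0)$, and $(\xi,\xi)$, $(\tau-\xi,\tau-\xi)$ are reflections of one another in the perpendicular $g(\omega)=-\omega+\tau$ through the foot $(\tau/2,\tau/2)$. Completing the square versus reflecting about the bisector is a difference of notation, not of idea, and the first inequalities in (b) and (c) are the only part of the lemma actually invoked in Theorem~\ref{theorem-bound}.

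Where you go wrong is in the justification of the trailing constants. You correctly note that $\min f=f(\tau/2)=\tau^2/2$, so the stated bounds $\tau/\sqrt2$ and $1/\sqrt2$ do not follow from $f$ as defined, and you patch this by asserting $\tau\ge\sqrt2$ ``in the regime in which the lemma is applied.'' That premise is false for one of the two applications: in \eqref{kappa} the lemma is used with $\tau=1$ (numerator and denominator there are $(x-1)^2+x^2$ with $x=2^{-1/2}(\kappa|\delta_\lambda|)^{2^{k_0-2}}$, resp.\ $x=2^{-1/2}(|\delta_\lambda|/\kappa)^{2^{k_0-2}}$), and $\tau^2/2=1/2<1/\sqrt2$; only the application in \eqref{delta} has $\tau=\kappa^{2^{k_0-2}}\ge2$. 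The correct diagnosis, which the paper's geometric phrasing makes visible, is that $\tau/\sqrt2$ (and $1/\sqrt2$ for $\tau=1$) is the minimum \emph{distance} from $(\tau,0)$ to the line $y=x$, i.e.\ the stated lower bounds belong to $\sqrt{f}$ rather than to $f$; the statement carries a typo that is harmless because Theorem~\ref{theorem-bound} never uses the numerical lower bound, only the comparison $f(\xi)\ge f(\eta)$. You should either prove the bounds for $\sqrt f$ or replace them by $\tau^2/2$, rather than import an unavailable hypothesis on $\tau$.
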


\begin{proof}
From the fact that  $(\xi, \xi)$ and $(1-\xi, 1-\xi)$ 
are two  symmetrical points with respect to the line $g(\omega)=-\omega +\tau$, the result follows with details omitted.
\end{proof}

\end{document}